\documentclass[a4paper]{amsart}

\usepackage{amsmath,amssymb,amsthm,mathrsfs}
\usepackage{fullpage}
\usepackage[alphabetic]{amsrefs}
\usepackage[all]{xy}
\newdir{ >}{{}*!/-5pt/@{>}} %% see xyguide exercise 14
\SelectTips{cm}{} %% see xyguide page 9

\title{Cubical and cosimplicial descent}
\author{Bj{\o}rn I. Dundas  and John Rognes}
\address{Department of Mathematics, University of Bergen, Norway}
\email{dundas@math.uib.no} \urladdr{http://folk.uib.no/nmabd}
\address{Department of Mathematics, University of Oslo, Norway}
\email{rognes@math.uio.no} \urladdr{http://folk.uio.no/rognes}

\date{April 13th 2017}

\newtheorem{theorem}{Theorem}[section]
\newtheorem{proposition}[theorem]{Proposition}
\newtheorem{lemma}[theorem]{Lemma}
\newtheorem{corollary}[theorem]{Corollary}
\theoremstyle{definition}
\newtheorem{definition}[theorem]{Definition}
\theoremstyle{remark}

\DeclareMathOperator*{\hocolim}{hocolim}
\DeclareMathOperator*{\colim}{colim}
\DeclareMathOperator{\Ext}{Ext}
\DeclareMathOperator{\hofib}{hofib}
\DeclareMathOperator*{\holim}{holim}
\DeclareMathOperator{\im}{im}
\DeclareMathOperator{\LKan}{LKan}

\DeclareMathOperator{\sk}{sk}
\DeclareMathOperator{\Tot}{Tot}
\DeclareMathOperator{\trc}{trc}
\newcommand{\bC}{\mathbb{C}}

\newcommand{\bN}{\mathbb{N}}
\newcommand{\bQ}{\mathbb{Q}}
\newcommand{\bS}{\mathbb S}
\newcommand{\bT}{\mathbb{T}}
\newcommand{\bZ}{\mathbb{Z}}
\newcommand{\longto}{\longrightarrow}

\newcommand{\sC}{\mathscr{C}}
\newcommand{\sO}{\mathscr{O}}
\newcommand{\sS}{\mathscr{S}}
\newcommand{\TAQ}{T\hspace{-.6mm}AQ}
\newcommand{\TC}{{T\hspace{-.1mm}C}}
\newcommand{\TF}{{T\hspace{-.4mm}F}}
\newcommand{\THH}{{T\hspace{-.5mm}H\hspace{-.5mm}H}}
\newcommand{\TP}{{T\hspace{-.5mm}P}}
\newcommand{\TR}{{T\hspace{-.5mm}R}}
\renewcommand{\:}{\colon}

\begin{document}

\begin{abstract}
We prove that algebraic $K$-theory, topological Hochschild homology and
topological cyclic homology satisfy cubical and cosimplicial descent
at connective structured ring spectra along $1$-connected maps of such
ring spectra.
\end{abstract}

\maketitle

\section{Introduction}

In this paper we extend the techniques used in \cite{Dun97} to prove that
algebraic $K$-theory, topological Hochschild homology and topological
cyclic homology of connective structured ring spectra all satisfy descent
along $1$-connected maps of such ring spectra.

\begin{theorem}[Cubical descent]
Let $R$ be a connective commutative $\bS$-algebra and let $A$ and $B$
be connective $R$-algebras.  Suppose that the unit map $\eta \: R \to B$
is $1$-connected.  Then the functors $F = K$, $\THH$ and $\TC$
satisfy cubical descent at $A$ along $R \to B$, in the sense that
in each case the natural map
$$
\eta \: F(A) \overset{\simeq}\longto \holim_{T \in P} F(X(T))
$$
is an equivalence of spectra.  Here $P$ denotes the partially ordered
set of nonempty finite subsets $T = \{t_0 < \dots < t_q\}$ of $\bN$,
and $X(T) \cong A \wedge_R B \wedge_R \dots \wedge_R B$,
with $(q+1)$ copies of~$B$.
\end{theorem}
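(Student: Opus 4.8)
The plan is to treat the three functors in the order $\THH$, $\TC$, $K$, reducing each later case to the earlier ones. Throughout, write $X^\bullet$ for the cosimplicial $R$-algebra (the Amitsur, or cobar, complex) with $X^q = A \wedge_R B^{\wedge_R(q+1)}$, whose cofaces insert a copy of $B$ via $\eta$ and whose codegeneracies multiply adjacent copies using the multiplication of $B$; its restriction to inclusions is the functor $X$ on $P$. The poset $P$ is directed, since any two finite subsets have their union as an upper bound, so its nerve is contractible. Together with the functor $T \mapsto [|T|-1]$ into $\Delta$, a standard cofinality argument identifies, for any functor $G$ valued in spectra, $\holim_{T \in P} G(X(T)) \simeq \Tot G(X^\bullet)$. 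I would record this comparison first, reducing cubical descent to the cosimplicial statement $F(A) \simeq \Tot F(X^\bullet)$.

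The core analytic input is a connectivity lemma: if $\eta \: R \to B$ is $1$-connected, then for every connective $R$-module (or $R$-algebra) $M$ the augmentation $M \to \Tot(M \wedge_R B^{\wedge_R(\bullet+1)})$ is an equivalence. This is proved by estimating the normalized cochains: in degree $s$ the normalized term is built from $M \wedge_R \bar B^{\wedge_R s}$, where $\bar B$ is the reduced part of $B$ determined by $\eta$, and $1$-connectivity of $\eta$ forces these to be increasingly connected, with connectivity growing linearly in $s$. Hence the $\Tot$-tower converges and the fiber of the augmentation, which becomes split after base change along $\eta$, vanishes. For $\THH$ I would combine this with the base-change equivalence $\THH(A \wedge_R B) \simeq \THH(A) \wedge_{\THH(R)} \THH(B)$, valid for algebras over the commutative $R$, which identifies $\THH(X^\bullet)$ with the Amitsur complex of the connective $\THH(R)$-algebra $\THH(A)$ along $\THH(\eta) \: \THH(R) \to \THH(B)$. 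As $\THH$ preserves the $1$-connectivity of maps, $\THH(\eta)$ is again $1$-connected, and the connectivity lemma gives $\THH(A) \simeq \Tot \THH(X^\bullet)$, which is $\THH$-descent. (Alternatively one may run the connectivity estimate directly on $\THH$ of the reduced pieces, bypassing the base-change formula.)

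For $\TC$ I would propagate the $\THH$-descent equivalence through the cyclotomic construction. The equivalence is one of cyclotomic spectra, so applying $(-)^{hS^1}$ and $(-)^{tS^1}$ and taking the fiber of $\varphi - \mathrm{can}$ produces $\TC(X^\bullet)$ from $\THH(X^\bullet)$. Homotopy fixed points and fibers commute with $\Tot$ automatically, being homotopy limits, so the only difficulty is the Tate construction $(-)^{tS^1}$, whose defining homotopy orbits are a homotopy colimit and need not commute with the homotopy limit $\Tot$. Here I would reuse the connectivity lemma to obtain \emph{uniform} connectivity bounds across cosimplicial degrees, forcing the interchange of $\Tot$ with the norm cofiber sequence and yielding $\TC(A) \simeq \Tot \TC(X^\bullet)$. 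This interchange is the main obstacle of the argument; the same estimates are what one needs in the classical model $\TC = \holim(\TR, F)$, where the fundamental cofiber sequences relating the genuine fixed points $\THH^{C_{p^n}}$ reintroduce homotopy orbits.

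Finally, for $K$ I would invoke the theorem of Dundas--Goodwillie--McCarthy: the homotopy fiber $\mathcal F = \hofib(K \to \TC)$ of the cyclotomic trace carries every $1$-connected map of connective ring spectra to an equivalence. Since $\eta$ is $1$-connected and $A$ is connective, each structure map $A \to X(T)$ is $1$-connected, its fiber being assembled from $A \wedge_R \bar B^{\wedge_R |T|}$, so $\mathcal F(A) \to \mathcal F(X(T))$ is an equivalence for every $T$. The diagram $T \mapsto \mathcal F(X(T))$ is therefore equivalent to the constant diagram at $\mathcal F(A)$, and since the nerve of the directed poset $P$ is contractible, $\holim_{T \in P} \mathcal F(X(T)) \simeq \mathcal F(A)$; that is, $\mathcal F$ satisfies cubical descent. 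Combining this with the already-established $\TC$-descent in the fiber sequence $\mathcal F \to K \to \TC$, and using that $\holim_{T \in P}$ preserves fiber sequences, gives $K(A) \simeq \holim_{T \in P} K(X(T))$, completing the proof.
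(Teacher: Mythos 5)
Your overall architecture (treat $\THH$ first by base change and connectivity, then $\TC$ by propagating through the cyclotomic machinery, then $K$ via the fiber of the trace) can be made to work, but your opening reduction fails in the stated generality. The theorem assumes only that $B$ is a connective $R$-algebra, not a commutative one, and for non-commutative $B$ the codegeneracies are not maps of $R$-algebras (the multiplication $\mu \: B \wedge_R B \to B$ is not a ring map), so the cosimplicial $R$-algebra $X^\bullet$ of your first paragraph does not exist; only the cube $T \mapsto X(T)$ and a semi-cosimplicial object over injective order-preserving maps do. You cannot retreat to the semi-cosimplicial totalization either: the induced functor from $P$ to the injective subcategory $M \subset \Delta$ is \emph{not} left cofinal (the paper makes exactly this point in its subsection on less commutative examples), so $\holim_P$ is not identified with any $\Tot$ in that setting. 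Even when $B$ is commutative, the cofinality of $f \: P \to \Delta$ that you call ``standard'' is genuinely delicate --- the paper notes that the argument offered in \cite{DGM13}*{A8.1.1} is flawed and cites \cite{Car08}*{\S6} --- and directedness of $P$ is not what drives it. The repair is to drop the $\Tot$ framing and run your connectivity estimate on the cube itself: your ``normalized cochain'' in degree $s$ is exactly the iterated homotopy cofiber of an $s$-dimensional subcube, namely $A$ smashed over $R$ with $s$ copies of $B/R$, hence $(2s-1)$-connected; this is Lemma~\ref{X-is-id-cartesian}, it makes $\eta_n$ at least $n$-connected, and it needs no codegeneracies. Your $\TC$ step then reduces to the same interchange problem the paper faces; its resolution --- induction over the norm--restriction sequences using that homotopy orbits preserve connectivity and commute with iterated cofibers, then sequential limits and the equalizer each costing at most one unit of connectivity --- is where the work lives, and your appeal to ``uniform connectivity bounds'' is the right idea but is a placeholder for precisely these arguments.

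Your treatment of $K$ is a genuinely different, and correct, route from the paper's. Theorem~\ref{K-Cartesian} deduces that $K(X^n)$ is $(n+1)$-Cartesian directly from \cite{Dun97}*{Prop.~5.1}, applied to a cube all of whose $d$-dimensional subcubes are $d$-Cartesian, independently of any statement about $\TC$. You instead quote the Dundas--Goodwillie--McCarthy theorem as a black box: each coaugmentation $A \to X(T)$ is $1$-connected, so $\hofib(K \to \TC)$ carries the coaugmented cube to a diagram receiving an objectwise equivalence from the constant diagram at $\hofib(K(A) \to \TC(A))$, and $\holim_P$ of such a diagram is the constant value because $N(P)$ is contractible ($P$ being directed). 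That argument is sound and pleasantly soft, with two trade-offs: it makes $K$-descent logically dependent on first establishing integral $\TC$-descent, and it yields only the qualitative equivalence rather than the quantitative $(n+1)$-Cartesianness of $K(X^n)$, which the paper needs for the vanishing line and strong convergence of the descent spectral sequence in Corollary~\ref{vanishing-line} and Theorem~\ref{descentspseq}.
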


When the cubical diagram $T \mapsto X(T)$ arises from a cosimplicial
spectrum $[q] \mapsto Y^q$, the homotopy limit over $P$ can be replaced
by a homotopy limit over the category $\Delta$ of nonempty finite totally
ordered sets $[q] = \{0 < \dots < q\}$.  This happens, for instance,
when the $R$-algebra $B$ is commutative.

\begin{theorem}[Cosimplicial descent]
Let $R$ be a connective commutative $\bS$-algebra, let $A$ be a connective
$R$-algebra, and let $B$ be a connective commutative $R$-algebra.
Suppose that the unit map $\eta \: R \to B$ is $1$-connected.  Then the
functors $F = K$, $\THH$ and $\TC$ satisfy cosimplicial descent at $A$
along $R \to B$, meaning that in each case the natural map
$$
\eta \: F(A) \overset{\simeq}\longto \holim_{[q] \in \Delta} F(Y^q)
$$
is an equivalence of spectra.  Here $Y^q = A \wedge_R B \wedge_R
\dots \wedge_R B$, with $(q+1)$ copies of~$B$.
\end{theorem}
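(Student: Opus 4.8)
The plan is to deduce the cosimplicial descent statement from the cubical descent statement already established, by exhibiting the $P$-cube $T \mapsto X(T)$ as the coface part of a cosimplicial $R$-algebra and then replacing the homotopy limit over $P$ by one over $\Delta$ through a cofinality argument. The only genuinely new input beyond the Cubical descent theorem is combinatorial: all of the analytic content (the roles of $K$, $\THH$, $\TC$, and the connectivity of $\eta$) is already carried by that theorem, so the proof of the present statement is formal once it is granted.

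First I would assemble the cosimplicial $R$-algebra. Because $B$ is a \emph{commutative} $R$-algebra, the unit $\eta \: R \to B$ and the multiplication $\mu \: B \wedge_R B \to B$ (which is an $R$-algebra map precisely because $B$ is commutative) make $[q] \mapsto Y^q = A \wedge_R B \wedge_R \dots \wedge_R B$, with $(q+1)$ copies of $B$, into a cosimplicial $R$-algebra: the cofaces $d^i$ insert a copy of $B$ via $\eta$ and the codegeneracies $s^j$ multiply two adjacent copies via $\mu$. Commutativity is exactly what is needed for the $s^j$ to be defined and for the cosimplicial identities to hold, and this is the single hypothesis distinguishing the cosimplicial statement from the cubical one. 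Applying $F$ objectwise yields a cosimplicial spectrum $[q] \mapsto F(Y^q)$.

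Next I would identify the cube inside this cosimplicial object. Let $u \: P \to \Delta$ send $T = \{t_0 < \dots < t_q\}$ to $[q] = [\,|T|-1\,]$, and an inclusion $S \subseteq T$ to the order-preserving injection recording the positions of the elements of $S$ among those of $T$. By construction $F(X(\cdot)) = F(Y^\bullet) \circ u$ as $P$-indexed diagrams of spectra, so the Cubical descent theorem gives
$$
\eta_P \: F(A) \overset{\simeq}\longto \holim_{T \in P} F(X(T)) = \holim_{T \in P} \bigl( F(Y^\bullet) \circ u \bigr) .
$$
The natural map $\eta_\Delta \: F(A) \to \holim_{[q] \in \Delta} F(Y^q)$ of the theorem, followed by restriction along $u$, recovers $\eta_P$; hence it suffices to prove that the restriction map
$$
\holim_{[q] \in \Delta} F(Y^q) \longto \holim_{T \in P} \bigl( F(Y^\bullet) \circ u \bigr)
$$
is an equivalence, for then $\eta_\Delta$ is an equivalence by two-out-of-three.

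This remaining statement is purely categorical: it holds for every cosimplicial spectrum and is equivalent to the assertion that $u \: P \to \Delta$ is homotopy initial, so that restriction along it preserves homotopy limits; concretely, that each comma category $(u \downarrow [q])$ has contractible nerve. I expect this to be the main obstacle. One checks that $(u \downarrow [q])$ is the poset of finite nonempty monotone partial functions $\bN \rightharpoonup [q]$ ordered by extension, and I would prove contractibility of its nerve by the standard cofinality criterion. The essential subtlety—and the reason the degeneracies, hence the commutativity of $B$, are indispensable here—is that the analogous category built from the injective subcategory $\Delta_{\mathrm{inj}}$ alone is already disconnected for $q \geq 1$, whereas the non-injective maps available in $\Delta$ permit any two objects to be joined by enlarging their domains and assigning compatible, possibly repeated, values; this is precisely what renders the nerves contractible. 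Granting this, the composite $F(A) \overset{\simeq}\longto \holim_{[q] \in \Delta} F(Y^q)$ is the asserted map $\eta$, and the same argument applies uniformly to $F = K$, $\THH$ and $\TC$.
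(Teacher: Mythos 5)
Your proposal is correct and follows essentially the same route as the paper: deduce cosimplicial descent from the already-established cubical descent by identifying the Amitsur cube with $Y^\bullet_R(A,B)$ composed with the functor $P \to \Delta$ (the paper's Lemma~\ref{X-is-Yf}) and then invoking the left cofinality of that functor (the paper's Proposition~\ref{Delta-vs-P} and Corollary~\ref{cosimp-vs-cube}). You correctly isolate the contractibility of the comma categories $(u \downarrow [q])$ as the one nontrivial point, but note that your connectivity heuristic is not yet a contractibility proof; the paper itself does not prove this either, instead citing \cite{Car08}*{\S6} and explicitly warning that the argument at this point in \cite{DGM13}*{A8.1.1} is flawed, so this step genuinely requires care.
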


These results will be proved in Theorems~\ref{K-Cartesian},
\ref{THH-TC-Cartesian} and~\ref{K-THH-TC-descent}.  They apply, in
particular, at any connective $\bS$-algebra $A$ along the unit map $\eta
\: \bS \to MU$ for complex bordism.  In the case of a group $\bS$-algebra
$A = \bS[\Gamma]$, this is relevant for Waldhausen's algebraic $K$-theory
$A(X) \simeq K(\bS[\Gamma])$ of the space $X = B\Gamma$.  In the final section
we discuss a program to analyze $K(\bS[\Gamma])$ and $\TC(\bS[\Gamma])$
in terms of $K(Y^\bullet)$ and $\TC(Y^\bullet)$ for $Y^q = \bS[\Gamma]
\wedge MU \wedge \dots \wedge MU$, with $(q+1)$ copies of~$MU$.

\section{Cubical descent}
\label{sec:cubdesc}

\subsection{Cubical diagrams}

We use terminology similar to that in \cite{Goo91}*{\S1}, including the
notions of $k$-Cartesian and $k$-co-Cartesian cubes.  For each integer
$n\ge1$, let $P^n_\eta$ be the set of subsets $T \subseteq \{1, \dots,
n\}$, partially ordered by inclusion, and let $P^n \subset P^n_\eta$
be the partially ordered subset consisting of the nonempty such~$T$.
A functor $X \: P^n_\eta \to \sC$ from $P^n_\eta$ to any category $\sC$
is called an $n$-dimensional cube, or an $n$-cube, in that category.
The restriction of $X$ to $P^n$ is the subdiagram $X|P^n$ obtained by
omitting the initial vertex $X(\varnothing)$ of the $n$-cube.  Given any
functor $F$ from $\sC$ to spectra, the composite functor $F \circ X$
is an $n$-cube of spectra, which we also denote as $F(X)$.  There is a
natural map
$$
\eta_n \: F(X(\varnothing)) \longto \holim_{T \in P^n} F(X(T))
	= \holim_{P^n} F(X)
$$
from the initial vertex of $F(X)$ to the homotopy limit
\cite{BK72}*{Ch.~XI} of the remaining part of the $n$-cube.
We simply write $F(X)$ in place of $F(X|P^n)$ when
it is clear that the restriction over $P^n \subset P^n_\eta$ is intended.
When forming the homotopy limit of a diagram of spectra we implicitly
assume that each vertex has been functorially replaced by a fibrant
spectrum, and dually for homotopy colimits.  This requires that $F$
takes values in a model category of spectra, such as that of \cite{BF78}
or one of those discussed in \cite{MMSS01},
with homotopy category equivalent to the stable homotopy category.
The $n$-cube $F(X)$ is $k$-Cartesian if and only if $\eta_n$ is
a $k$-connected map.  This is equivalent to the $n$-cube being
$(n+k-1)$-co-Cartesian, since the iterated homotopy cofiber of an
$n$-cube of spectra is equivalent to the $n$-fold suspension of its
iterated homotopy fiber.
Consider also the partially ordered set $P_\eta$ of finite subsets $T$ of
$\bN = \{1, 2, 3, \dots\}$, and let $P \subset P_\eta$ be the partially
ordered subset of nonempty such $T$.  A functor $X$ from $P_\eta$ is an
infinite-dimensional cube, or $\omega$-cube.

\begin{definition}
There is a natural map
$$
\eta \: F(X(\varnothing)) \longto \holim_{T \in P} F(X(T))
        = \holim_P F(X)
$$
and a natural equivalence $\holim_P F(X) \simeq \holim_n \, \holim_{P^n}
F(X)$ that connects $\eta$ to $\holim_n \eta_n$.
We say that $F$ satisfies \emph{cubical descent} over $X$ if $\eta$
is an equivalence of spectra.
\end{definition}

For example, if the connectivity of $\eta_n$ grows to infinity with $n$,
then $\eta$ is an equivalence and $F$ satisfies cubical descent over~$X$.
Cubical descent for $F$ over~$X$ ensures that the homotopy type of the
spectrum $F(X(\varnothing))$ is essentially determined by the homotopy
types of the spectra $F(X(T))$ for nonempty finite subsets $T \subset
\bN$.

\subsection{Amitsur cubes}

Let $R$ be a connective commutative $\bS$-algebra, where $\bS$ denotes
the sphere spectrum.  First, let $A$ and $B$ be connective $R$-modules,
and let $\eta \: R \to B$ be a map of $R$-modules.  We can and will
assume that $A$ and $B$ are flat, i.e., $R$-cofibrant as $R$-modules in
the sense of \cite{Shi04}*{Thm.~2.6(1)}.
Let $n\ge1$, and consider the $n$-cube $X^n  = X^n_R(A,B) \: T \mapsto
X^n(T)$ of spectra given by
\begin{equation} \label{XnT}
X^n(T) = A \wedge_R X_{1,T} \wedge_R \dots \wedge_R X_{n,T} \,,
\end{equation}
where $X_{i,T} = B$ for $i \in T$ and $X_{i,T} = R$ for $i \notin T$.
For each inclusion $T' \subseteq T$ among subsets of $\{1, \dots, n\}$,
the map $X^n(T') \to X^n(T)$ is the smash product over $R$ of $id_A$ with
a copy of $id_B$ for each $i \in T'$, a copy of $\eta \: R \to B$ for
each $i \in T \setminus T'$, and a copy of $id_R$ for each $i \notin T$.
Letting $n$ vary, these definitions assemble to specify an $\omega$-cube
$X^\omega$ in spectra, whose restriction over $P^n_\eta \subset P_\eta$
is the $n$-cube $X^n$.
These constructions are homotopy invariant, because of the assumption
that $A$ and $B$ are flat as $R$-modules.

\begin{lemma} \label{X-is-id-cartesian}
Suppose that $\eta \: R \to B$ is $1$-connected.  Then each
$d$-dimensional subcube of the $n$-cube $X^n = X^n_R(A,B)$ is
$d$-Cartesian and $(2d-1)$-co-Cartesian, for every $0 \le d \le n$.
\end{lemma}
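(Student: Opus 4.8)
The plan is to reduce the statement about arbitrary subcubes to a single connectivity estimate for the full cube, and then to compute the iterated homotopy cofiber explicitly. First I would observe that a $d$-dimensional subcube of $X^n = X^n_R(A,B)$ is obtained by choosing a $d$-element set $S \subseteq \{1,\dots,n\}$ of directions to vary and fixing each coordinate outside $S$ to be either $R$ or $B$. Smashing $A$ together with the fixed factors, and cancelling the copies of $R$ since they act as the unit for $\wedge_R$, produces a connective $R$-module $C = A \wedge_R B \wedge_R \dots \wedge_R B$, and after reindexing $S \cong \{1,\dots,d\}$ the subcube becomes exactly the full $d$-cube $X^d_R(C,B)$. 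Connectivity of $C$ follows from connectivity of $A$, $B$ and $R$, using the flatness hypotheses. Thus it suffices to prove that $X^d_R(C,B)$ is $d$-Cartesian and $(2d-1)$-co-Cartesian for every connective $R$-module $C$.

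Next I would identify the iterated homotopy cofiber of this $d$-cube. Writing $\bar B = \operatorname{cofib}(\eta \colon R \to B)$, the cube $X^d_R(C,B)$ is the external smash product over $R$ of $C$ with $d$ copies of the $1$-cube $(R \xrightarrow{\eta} B)$. Since $\wedge_R$ preserves homotopy cofiber sequences in each variable, using the flatness/cofibrancy of the modules, the total cofiber of a smash product of cubes is the smash product of the total cofibers, so the total cofiber of $X^d_R(C,B)$ is
$$
C \wedge_R \bar B \wedge_R \dots \wedge_R \bar B \qquad (\text{$d$ copies of } \bar B) .
$$
By the relation recalled in the subsection on cubical diagrams, this total cofiber is equivalent to the $d$-fold suspension of the total homotopy fiber, so $X^d_R(C,B)$ is $k$-Cartesian precisely when the total cofiber is $(k+d-1)$-connected. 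In particular $d$-Cartesian is equivalent to the total cofiber being $(2d-1)$-connected, and hence, via the equivalence ``$k$-Cartesian iff $(d+k-1)$-co-Cartesian'', to the cube being $(2d-1)$-co-Cartesian.

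It remains to estimate the connectivity of $C \wedge_R \bar B \wedge_R \dots \wedge_R \bar B$. Since $\eta$ is $1$-connected, the long exact sequence of the cofiber shows that $\bar B$ has vanishing homotopy in degrees $\le 1$, that is, $\bar B$ is $1$-connected. The key step, and the main technical point, is the connectivity estimate for the relative smash product over the connective ring spectrum $R$: if $M$ is $a$-connected and $N$ is $b$-connected then $M \wedge_R N$ is $(a+b+1)$-connected. Iterating this estimate gives that the $d$-fold smash power $\bar B \wedge_R \dots \wedge_R \bar B$ is $(2d-1)$-connected, and smashing with the connective, i.e.\ $(-1)$-connected, module $C$ leaves the connectivity unchanged, so the total cofiber is $(2d-1)$-connected. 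By the previous paragraph this yields both assertions of the lemma, and the boundary case $d=0$ reduces to the statement that $C$ is connective. The main obstacle is precisely the $\wedge_R$-connectivity estimate: it relies on $R$ being connective and on the flat/cofibrant replacements, so that the smash products under consideration are the derived ones and the relevant Tor spectral sequence is concentrated in the expected range of total degrees.
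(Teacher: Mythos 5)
Your argument is correct and is essentially the paper's own proof: both identify the iterated homotopy cofiber of a $d$-dimensional subcube with a smash product over $R$ involving $d$ copies of $B/R$, deduce $(2d-1)$-connectivity from the connectivity estimate for $\wedge_R$ over a connective base, and convert co-Cartesianity to Cartesianity via the suspension relation. Your reduction to the full $d$-cube $X^d_R(C,B)$ and the explicit $\wedge_R$-connectivity lemma are just a more detailed unpacking of the paper's one-line computation of the total cofiber.
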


\begin{proof}
Let $B/R$ denote the $1$-connected homotopy cofiber of $\eta \:
R \to B$.  The iterated homotopy cofiber of any $d$-dimensional
subcube of $X^n$ is equivalent to the smash product over $R$ of $A$
with $d$ copies of~$B/R$ and $(n-d)$ copies of~$R$ or~$B$.  Hence it
is at least $(2d-1)$-connected.  Thus the $d$-dimensional subcube is
$(2d-1)$-co-Cartesian, which, as we noted earlier, is equivalent to it
being $d$-Cartesian.
\end{proof}

Next, suppose that $A$ and $B$ are connective $R$-algebras, and that $\eta
\: R \to B$ is the unit map of~$B$.  We can assume that $A$ and $B$ are
$R$-cofibrant as $R$-algebras in the sense of \cite{Shi04}*{Thm.~2.6(3)}.
The underlying $R$-modules of $A$ and $B$ are then flat.  We view $R$
as a base, $A$ as the object at which we wish to evaluate a functor, and
$R \to B$ as a covering that induces a covering $A \to A \wedge_R B$.
In this case the $n$-cube $X^n  = X^n_R(A,B) \: T \mapsto X^n(T)$,
defined by the same expression as in~\eqref{XnT}, takes values in the
category of connective $R$-algebras.  For varying $n$, these assemble to
an $\omega$-cube $X^\omega = X^\omega_R(A,B)$.

\begin{definition}
Let $F$ be any functor from connective $R$-algebras to spectra.  We call
$F(X^\omega)$ the \emph{Amitsur cube} for $F$ at $A$ along $\eta \:
R \to B$, by analogy with the algebraic construction in \cite{Ami59}.
When $F$ satisfies cubical descent over $X^\omega$ we say that $F$
satisfies \emph{cubical descent at $A$ along $R \to B$}.
\end{definition}

Cubical descent for $F$ at $A$ along $R \to B$ ensures that $F(A)$
can be recovered from the diagram of spectra $T \mapsto F(X^\omega(T))$
for $T\in P$, having entries of the form $F(A \wedge_R B \wedge_R \dots
\wedge_R B)$ with one or more copies of~$B$.

\subsection{Cubical descent for $K$, $\THH$ and $\TC$}

Let $A \mapsto K(A)$ denote the algebraic $K$-theory functor from
connective $\bS$-algebras to spectra, see~\cite{BHM93}*{\S5} and
\cite{EKMM97}*{Ch.~VI}.

\begin{theorem} \label{K-Cartesian}
Let $R$ be a connective commutative $\bS$-algebra, let $A$ and $B$ be
connective $R$-algebras, and suppose that the unit map $\eta \: R \to
B$ is $1$-connected.

(a)
The $n$-cube $K(X^n) = K(X^n_R(A,B)) \: T \mapsto K(X^n(T))$ is
$(n+1)$-Cartesian, for each $n\ge1$.

(b)
Algebraic $K$-theory satisfies cubical descent at $A$ along $R \to B$.
\end{theorem}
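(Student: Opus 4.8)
The plan is to derive part~(b) from part~(a) by a connectivity argument, and to prove part~(a) by comparing $K$ with $\TC$ through the cyclotomic trace and invoking the theorem of Dundas--Goodwillie--McCarthy. Granting~(a), the initial vertex of the Amitsur cube is $X^\omega(\varnothing) = A$, and the comparison map $\eta_n^K \colon K(A) \to \holim_{P^n} K(X^\omega)$ is $(n+1)$-connected for every~$n$, so its connectivity grows to infinity with~$n$. Under the natural equivalence $\holim_P K(X^\omega) \simeq \holim_n \holim_{P^n} K(X^\omega)$ the map $\eta$ corresponds to $\holim_n \eta_n^K$; by the observation following the definition of cubical descent this forces $\eta \colon K(A) \to \holim_P K(X^\omega)$ to be an equivalence, which is precisely cubical descent for $K$ at~$A$ along $R \to B$. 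Thus everything rests on the Cartesian estimate in~(a).

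To prove~(a) I would introduce the homotopy fibre functor $\Phi = \hofib(\trc \colon K \to \TC)$. Applying the natural transformation $\trc$ to the $n$-cube $X^n = X^n_R(A,B)$ yields a levelwise homotopy fibre sequence of $n$-cubes
$$
\Phi(X^n) \longto K(X^n) \overset{\trc}\longto \TC(X^n) \,.
$$
The crucial input is the integral Dundas--Goodwillie--McCarthy theorem \cite{Dun97}: for every $1$-connected map $f$ of connective ring spectra the square relating $K$ and $\TC$ along $f$ is homotopy Cartesian, equivalently $\Phi(f)$ is an equivalence. Each edge of $X^n$ corresponds to an inclusion $T' \subset T$ with $T = T' \cup \{j\}$ and is the smash product over~$R$ of identity maps with a single copy of $\eta \colon R \to B$; by the $d=1$ case of Lemma~\ref{X-is-id-cartesian} its homotopy cofibre is $1$-connected, so each edge is a $1$-connected map. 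Hence $\Phi$ carries every edge of $X^n$ to an equivalence.

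It follows that $\Phi(X^n)$ is an $n$-cube all of whose structure maps are equivalences; hence it is equivalent to a constant cube, and its total homotopy fibre $\hofib(\eta_n^\Phi)$ is contractible. The total homotopy fibre of an $n$-cube is exactly the homotopy fibre of its comparison map $\eta_n$, and, being a finite homotopy limit, it carries the levelwise fibre sequence of cubes above to a fibre sequence of spectra
$$
\hofib(\eta_n^\Phi) \longto \hofib(\eta_n^K) \longto \hofib(\eta_n^{\TC}) \,.
$$
Since the left-hand term is contractible, $\hofib(\eta_n^K) \simeq \hofib(\eta_n^{\TC})$, so $K(X^n)$ is $(n+1)$-Cartesian if and only if $\TC(X^n)$ is. The latter is Theorem~\ref{THH-TC-Cartesian}, which completes part~(a) and hence the theorem.

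The main obstacle is not this comparison, which is formal once the trace is in hand, but the underlying Cartesian estimate for $\TC$ provided by Theorem~\ref{THH-TC-Cartesian}; there the genuine work lies in controlling $\THH(A \wedge_R B \wedge_R \dots \wedge_R B)$ and passing through the cyclotomic structure defining~$\TC$. Within the present argument the only delicate points are the invocation of the integral Dundas--Goodwillie--McCarthy theorem for $1$-connected maps and the bookkeeping identifying the total homotopy fibre of a cube with $\hofib(\eta_n)$, which is what allows the fibre sequence of cubes to transfer the estimate from $\TC$ to~$K$.
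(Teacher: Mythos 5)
Your reduction of (b) to (a) matches the paper, and your trace argument identifying the total homotopy fibres of $K(X^n)$ and $\TC(X^n)$ is sound as far as it goes: each edge of $X^n$ is $1$-connected by the $d=1$ case of Lemma~\ref{X-is-id-cartesian} and induces an isomorphism on $\pi_0$, so the integral Dundas--Goodwillie--McCarthy theorem applies edgewise, $\Phi(X^n)$ has contractible total fibre, and $\hofib(\eta_n^K) \simeq \hofib(\eta_n^{\TC})$. The gap is in the final invocation: Theorem~\ref{THH-TC-Cartesian} does \emph{not} assert that $\TC(X^n)$ is $(n+1)$-Cartesian. Its part~(c) only makes the $n$-cube $\TC(X^n;p)$ $(n-1)$-Cartesian, since passing from the fixed-point cubes $\THH(X^n)^C$ through the sequential limit and the homotopy equalizer costs connectivity. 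So your argument as written yields only that $K(X^n)$ is $(n-1)$-Cartesian --- enough to conclude the descent statement~(b), but two degrees short of the assertion in~(a).

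Note also that the direction of inference is reversed relative to the paper. The paper proves~(a) by citing Dundas's Prop.~5.1 directly: $K$ of an $n$-cube of connective ring spectra all of whose $d$-dimensional subcubes are $d$-Cartesian is $(n+1)$-Cartesian; that input is established by a finer analysis (via stable $K$-theory and the trace to $\THH$, not to $\TC$). It is only afterwards, in Corollary~\ref{vanishing-line}, that the sharp estimate is transported \emph{from} $K$ \emph{to} $\TC$ using exactly the homotopy Cartesian trace square you employ. To make your route deliver $(n+1)$-Cartesianness of $K(X^n)$ you would need an independent proof that $\TC(X^n)$ is $(n+1)$-Cartesian, which neither Theorem~\ref{THH-TC-Cartesian} nor anything else in the paper supplies without already knowing the $K$-theory statement.
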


\begin{proof}
By Lemma~\ref{X-is-id-cartesian} the $n$-cube $X^n \: T \mapsto X^n(T)$
has the property that every $d$-dimensional subcube is $d$-Cartesian.
Hence the assertion that the $n$-cube $K(X^n)$ is $(n+1)$-Cartesian is
the content of \cite{Dun97}*{Prop.~5.1}.
In other words, the natural map $\eta_n \: K(A) \to \holim_{P^n} K(X^n)$
is $(n+1)$-connected.  Thus $\eta \: K(A) \to \holim_P K(X^\omega)$
is an equivalence, and $K$ satisfies cubical descent over~$X^\omega$.
\end{proof}

Let $A \mapsto \THH(A)$ denote the topological Hochschild homology
functor from connective $\bS$-algebras to cyclotomic spectra,
see~\cite{BHM93}*{\S3} and \cite{HM97}*{\S2}.  In particular, the circle
group $\bT$ acts naturally on the underlying spectrum of $\THH(A)$, and
for each subgroup $C = C_{p^m} \subset \bT$ of order $p^m$, where $p$
is a prime, there is a homotopy orbit functor $A \mapsto \THH(A)_{hC}$
and a fixed point functor $A \mapsto \THH(A)^C$.  These are related by
a natural homotopy cofiber sequence of spectra
\begin{equation} \label{norm-restriction}
\THH(A)_{hC_{p^m}} \overset{N}\longto
	\THH(A)^{C_{p^m}} \overset{R}\longto
	\THH(A)^{C_{p^{m-1}}}
\end{equation}
called the norm--restriction sequence.  Let $\TR(A; p) = \holim_{R,m}
\THH(A)^{C_{p^m}}$ be the sequential homotopy limit over the $R$-maps.
Let $F \: \THH(A)^{C_{p^m}} \to \THH(A)^{C_{p^{m-1}}}$ be the map
forgetting part of the invariance, and let $\TF(A; p) = \holim_{F,m}
\THH(A)^{C_{p^m}}$ be the sequential homotopy limit over the $F$-maps.
The $R$-maps induce a self-map of $\TF(A; p)$, also denoted~$R$, and
the topological cyclic homology functor $\TC(A; p)$ can be defined as
the homotopy equalizer of $id$ and $R$:
$$
\xymatrix{
\TC(A; p) \ar[r]^{\pi} & \TF(A; p) \ar@<0.5ex>[r]^{id} \ar@<-0.5ex>[r]_{R}
	& \TF(A; p) \,.
}
$$
The (integral) topological cyclic homology of~$A$, denoted $\TC(A)$,
is defined as the homotopy pullback of two maps
$$
\prod_{\text{$p$ prime}} \TC(A; p)_p \longrightarrow
	\prod_{\text{$p$ prime}} \holim_{F,m}\, \THH(A)^{hC_{p^m}}_p
\longleftarrow \THH(A)^{h\bT} \,,
$$
see~\cite{DGM13}*{Def.~6.4.3.1}.  The left hand map is defined in terms of
$\pi \: \TC(A; p) \to \TF(A; p)$ and the comparison maps $\Gamma_m \:
\THH(A)^{C_{p^m}} \to \THH(A)^{hC_{p^m}}$ from fixed points to homotopy
fixed points.  The right hand map is induced by the forgetful maps
$\THH(A)^{h\bT} \to \THH(A)^{hC_{p^m}}$ associated to the inclusions
$C_{p^m} \subset \bT$, for varying $p$ and~$m$.
The subscript ``$p$'' denotes $p$-completion.
For each prime~$p$, the projection $\TC(A) \to \TC(A; p)$ becomes an
equivalence after $p$-completion.

\begin{theorem} \label{THH-TC-Cartesian}
Let $R$ be a connective commutative $\bS$-algebra, let $A$ and $B$ be
connective $R$-algebras, and suppose that the unit map $\eta \: R \to B$
is $1$-connected.  Consider the $n$-cube $X^n = X^n_R(A,B)$, as above.

(a)
Each $d$-dimensional subcube of $\THH(X^n)$ is $d$-Cartesian and
$(2d-1)$-co-Cartesian.

(b)
Each $d$-dimensional subcube of $\THH(X^n)_{hC}$, $\THH(X^n)^C$ and
$\TR(X^n; p)$ is $d$-Cartesian and $(2d-1)$-co-Cartesian, for every $C =
C_{p^m} \subset \bT$.

(c)
Each $d$-dimensional subcube of $\TF(X^n; p)$ and $\TC(X^n; p)$ is
$(d-1)$-Cartesian and $(2d-2)$-co-Cartesian.

(d)
Topological Hochschild homology, $\THH(-)_{hC}$, $\THH(-)^C$, $\TR(-;p)$,
$\TF(-;p)$, $\TC(-;p)$ and (integral) topological cyclic homology all
satisfy cubical descent at $A$ along $R \to B$.
\end{theorem}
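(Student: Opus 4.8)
The plan is to prove (a)--(c) as connectivity statements about the cubes and then deduce the descent statement (d) by letting the cube dimension grow. Throughout I would use freely that for a $d$-cube of spectra being $d$-Cartesian is equivalent to being $(2d-1)$-co-Cartesian, so that I may pass between estimates on iterated homotopy fibers (finite homotopy limits) and on iterated homotopy cofibers (finite homotopy colimits). For part (a) I would compute $\THH$ as the geometric realization of the cyclic bar construction $[k] \mapsto X^n(T)^{\wedge_\bS(k+1)}$. Since realization and the iterated cofiber are both homotopy colimits they commute, so the iterated cofiber of a $d$-dimensional subcube of $\THH(X^n)$ is the realization of the iterated cofibers of the smash powers. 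In each simplicial degree the toggles in the $d$ chosen directions each contribute at least one copy of the $1$-connected spectrum $B/R$, smashed with connective factors, so exactly as in Lemma~\ref{X-is-id-cartesian} the iterated cofiber is at least $(2d-1)$-connected; realization preserves this bound, giving the co-Cartesian and hence the Cartesian half of (a).

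For (b) I would treat the three functors in turn. The homotopy orbits $\THH(X^n)_{hC}$ form a homotopy colimit of $\THH(X^n)$, hence inherit the $(2d-1)$-co-Cartesian estimate of (a). For the genuine fixed points I would induct on $m$ using the norm--restriction cofiber sequence~\eqref{norm-restriction}: the base case $\THH(X^n)^{C_{p^0}} = \THH(X^n)$ is~(a), and the inductive step places $\THH(X^n)^{C_{p^m}}$ between $\THH(X^n)_{hC_{p^m}}$ and $\THH(X^n)^{C_{p^{m-1}}}$, so the co-Cartesian estimate passes by two-out-of-three for cofiber sequences. Finally $\TR(X^n;p) = \holim_{R,m}\THH(X^n)^{C_{p^m}}$ is a sequential homotopy limit of $d$-Cartesian cubes; such a limit is $d$-Cartesian once the associated $\lim^1$-term vanishes, and here it does, because~\eqref{norm-restriction} identifies the fiber of each $R$-map with the highly connected homotopy-orbit cube, forcing the tower to be surjective on the critical homotopy group.

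Part~(c) is where the estimate drops, and I expect this to be the main obstacle. The cube $\TF(X^n;p) = \holim_{F,m}\THH(X^n)^{C_{p^m}}$ is again a sequential homotopy limit of $d$-Cartesian cubes, but the $F$-maps carry no analogue of the norm--restriction surjectivity, so the $\lim^1$-term need not vanish and the iterated fiber loses one degree of connectivity, leaving only $(d-1)$-Cartesian. For $\TC(X^n;p)$ I would identify the homotopy equalizer of $id$ and $R$ on $\TF$ with the homotopy fiber of $1-F$ acting on the $d$-Cartesian cube $\TR(X^n;p)$ of~(b); the iterated fiber of the fiber of a self-map is the fiber of a self-map of a $(d-1)$-connected spectrum, which is $(d-2)$-connected, so $\TC(X^n;p)$ is $(d-1)$-Cartesian. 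The delicate point throughout~(c) is to verify that these drops are by exactly one degree and no more: the $R$-direction is controlled by~\eqref{norm-restriction}, while for $\TC$ one must route the estimate through $\TR$ rather than through the less connected $\TF$, since the fiber of $1-R$ on $\TF$ would a priori lose a second degree.

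For~(d) I would set $d=n$ in (a)--(c): the full $n$-cubes of $\THH$, $\THH(-)_{hC}$, $\THH(-)^C$ and $\TR(-;p)$ are $n$-Cartesian, while those of $\TF(-;p)$ and $\TC(-;p)$ are $(n-1)$-Cartesian, so in each case the connectivity of $\eta_n$ grows to infinity with~$n$ and cubical descent follows from the criterion recorded after the definition. Descent for the remaining functors is then formal: $\TR(-;p)$, $\TF(-;p)$, $\TC(-;p)$, the homotopy fixed points $\THH(-)^{h\bT}$ and $\holim_{F,m}\THH(-)^{hC_{p^m}}$ are all assembled from $\THH(-)^{C_{p^m}}$ by homotopy limits, which commute with $\holim_P$ and preserve equivalences, so the descent equivalence propagates. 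Integral $\TC$ is the homotopy pullback of $\prod_p\TC(-;p)_p$ and $\THH(-)^{h\bT}$ over $\prod_p\holim_{F,m}\THH(-)^{hC_{p^m}}_p$; homotopy pullbacks and products preserve descent, so the one remaining point is that $p$-completion commutes with the infinite homotopy limit $\holim_P$. This is where the connectivity estimates of (a)--(c) are genuinely needed, as they force the convergence that makes the smash with the mod-$p$ Moore spectra interchangeable with $\holim_P$; assembling these pieces yields descent for integral $\TC$ and completes~(d).
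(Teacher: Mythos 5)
Your proposal is correct and follows essentially the same route as the paper: part (a) via the cyclic bar construction (you argue levelwise in the simplicial direction, which amounts to the paper's identification $\THH(X^n_R(A,B)) \simeq X^n_{B^{cy}(R)}(B^{cy}(A),B^{cy}(B))$ and an appeal to Lemma~\ref{X-is-id-cartesian}), part (b) via homotopy orbits preserving connectivity, the norm--restriction induction, and the $\lim$-$\lim^1$ sequence, and part (d) by letting $n \to \infty$ and commuting $\holim_P$ with the homotopy limits defining integral $\TC$. Your explicit handling of (c) -- routing $\TC(-;p)$ through the $d$-Cartesian cube $\TR(-;p)$ as the fiber of $1-F$ rather than through the less connected $\TF(-;p)$ -- is exactly the content of the paper's citation of \cite{Dun97}*{Prop.~4.4}, so the two arguments agree in substance.
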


\begin{proof}
(a)
The underlying spectrum of $\THH(A)$ is naturally equivalent to the
realization $B^{cy}(A)$ of the cyclic bar construction $[q] \mapsto A
\wedge \dots \wedge A$, with $(q+1)$ copies of~$A$.
Hence there is a natural equivalence
\begin{align*}
\THH(A \wedge_R B \wedge_R \dots \wedge_R B)
&\simeq B^{cy}(A \wedge_R B \wedge_R \dots \wedge_R B) \\
&\cong B^{cy}(A) \wedge_{B^{cy}(R)} B^{cy}(B) \wedge_{B^{cy}(R)} 
\dots \wedge_{B^{cy}(R)} B^{cy}(B) \,,
\end{align*}
where $B^{cy}(R)$ is a connective commutative $\bS$-algebra, $B^{cy}(A)$
and $B^{cy}(B)$ are flat connective $B^{cy}(R)$-modules, and $B^{cy}(\eta)
\: B^{cy}(R) \to B^{cy}(B)$ is a $1$-connected map of $B^{cy}(R)$-modules.
By Lemma~\ref{X-is-id-cartesian} each $d$-dimensional subcube of
$$
\THH(X^n_R(A,B)) \simeq X^n_{B^{cy}(R)}(B^{cy}(A), B^{cy}(B))
$$
is $(2d-1)$-co-Cartesian.

(b)
Each $d$-dimensional subcube of $\THH(X^n)_{hC}$ is at least as
co-Cartesian as the corresponding $d$-dimensional subcube of $\THH(X^n)$,
because homotopy orbits preserve connectivity.  The analogous claim
for the subcubes of $\THH(X^n)^C$, with $C = C_{p^m}$, follows by
induction on~$m$ from the norm--restriction homotopy cofiber sequence.
The equivalent Cartesian claim follows.  The Cartesian claim for the
subcubes of the sequential homotopy limit $\TR(X^n; p)$ follows from
the Milnor $\lim$-$\lim^1$ sequence.  In this case no connectivity
is lost, because $\lim^1$ vanishes on sequences of surjections,
see~\cite{Dun97}*{Lem.~4.3}.

(c)
The Cartesian claims for $\TF(X^n; p)$ and $\TC(X^n; p)$ follow
from those for the cubes $\THH(X^n)^C$, since sequential homotopy
limits and homotopy equalizers reduce connectivity by at most one,
see~\cite{Dun97}*{Prop.~4.4}.

(d)
For each of the functors $F = \THH$, $\THH(-)_{hC}$, $\THH(-)^C$,
$\TR(-;p)$, $\TF(-;p)$ and $\TC(-;p)$, the natural map $\eta_n \: F(A)
\to \holim_{P^n} F(X^n)$ is $n$- or $(n-1)$-connected.  Thus the natural
map $\eta \: F(A) \to \holim_P F(X^\omega)$ is an equivalence, and each
of these functors satisfies cubical descent over~$X^\omega$.  Finally,
for integral topological cyclic homology we use that each vertex in the
natural diagram defining it satisfies cubical descent, since $\holim_P$
commutes up to a natural chain of equivalences with other homotopy limits.
\end{proof}

\subsection{Two spectral sequences}
\label{subsec:cubdescspseq}

For each $\omega$-cube $F(X)$ of spectra, the equivalent homotopy
limits
$$
\holim_P F(X) \simeq \holim_n \, \holim_{P^n} F(X)
$$
give rise to two spectral sequences.
On the one hand, we have the homotopy spectral sequence
$$
E_1^{s,t} = \pi_{t-s} \hofib(p_s)
	\Longrightarrow_s \pi_{t-s} (\holim_n \, \holim_{P^n} F(X))
$$
associated to the tower of fibrations
\begin{equation} \label{P-tower}
\dots \to \holim_{P^{s+1}} F(X) \overset{p_s}\longto
	\holim_{P^s} F(X) \to \dots \,,
\end{equation}
see~\cite{BK72}*{IX.4.2}.  Here $\hofib(p_s)$ denotes the homotopy fiber
of the map $p_s$, which is equivalent to the iterated homotopy fiber of
the $s$-dimensional subcube of $F(X|P^{s+1})$ with vertices
indexed by the $T \in
P^{s+1}$ with $s+1 \in T$.  This spectral sequence is conditionally
convergent \cite{Boa99}*{Def.~5.10} to the sequential limit $\lim_s
\pi_*(\holim_{P^s} F(X))$ of the homotopy groups of that tower.
For each $r\ge1$, the bigrading of the $d_r$-differential is given by
$$
d_r^{s,t} \: E_r^{s,t} \longto E_r^{s+r,t+r-1} \,.
$$
Following the usual conventions for Adams spectral sequences, $d_r$
maps bidegree $(x,y)$ in the $(t-s,s)$-plane to bidegree $(x-1,y+r)$.
If the derived limit $RE_\infty = \lim_r^1 E_r$ of $E_r$-terms vanishes
in each bidegree, then the spectral sequence converges strongly to $\pi_*
(\holim_n \, \holim_{P^n} F(X))$, see~\cite{Boa99}*{Thm.~7.4}, which is
isomorphic to $\pi_* F(X(\varnothing))$ when $F$ satisfies cubical descent
over $X$.  We call this the \emph{cubical descent spectral sequence}.

\begin{corollary} \label{vanishing-line}
Let $R$, $A$, $B$ and $\eta$ be as in Theorem~\ref{K-Cartesian} and
Theorem~\ref{THH-TC-Cartesian}.  When $F$ is one of the functors $K$,
$\THH$, $\dots$, $\TC(-;p)$ or $\TC$ of those theorems, and $X =
X^\omega_R(A,B)$, the cubical descent spectral sequence
$$
E_1^{s,t} = \pi_{t-s} \hofib(p_s)
        \Longrightarrow_s \pi_{t-s} F(A)
$$
vanishes above a line of slope~$+1$ in the $(t-s,s)$-plane, starting
at the $E_1$-term, hence collapses at a finite stage in each bidegree.
Thus $RE_\infty = 0$ and the spectral sequence is strongly convergent.
\end{corollary}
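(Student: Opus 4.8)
The plan is to read a slope-$1$ vanishing line for the $E_1$-term directly off the Cartesian estimates already established, and then to deduce finite collapse and the vanishing of $RE_\infty$ from the bidegree of the differentials together with the edge of the spectral sequence. First I would identify the groups $E_1^{s,t}$. As recorded above, $\hofib(p_s)$ is the iterated homotopy fiber of the $s$-dimensional subcube of $F(X^\omega|P^{s+1})$ spanned by those $T$ with $s+1\in T$. Writing $T = T'\cup\{s+1\}$ with $T'\subseteq\{1,\dots,s\}$ and using that $X^{s+1}(T) \cong (A\wedge_R B)\wedge_R X_{1,T'}\wedge_R\dots\wedge_R X_{s,T'}$, this subcube is precisely the Amitsur $s$-cube $X^s_R(A\wedge_R B,B)$ evaluated at the connective $R$-algebra $A\wedge_R B$. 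Hence the relevant estimate for $\hofib(p_s)$ is the Cartesian estimate for the full $s$-cube $F(X^s_R(A\wedge_R B,B))$, supplied by Theorem~\ref{K-Cartesian}(a) for $F=K$ and by Theorem~\ref{THH-TC-Cartesian}(a)--(c) for $F=\THH,\dots,\TC(-;p)$.

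Next I would translate these estimates into the vanishing line. In each case the $s$-cube $F(X^s_R(A\wedge_R B,B))$ is $(s+e)$-Cartesian, with $e=1$ for $K$, with $e=0$ for $\THH$, $\THH(-)_{hC}$, $\THH(-)^C$ and $\TR(-;p)$, and with $e=-1$ for $\TF(-;p)$ and $\TC(-;p)$. Thus $\hofib(p_s)$ is $(s+e-1)$-connected, so
$$
E_1^{s,t}=\pi_{t-s}\hofib(p_s)=0 \quad\text{whenever } t-s<s+e \,.
$$
In the $(t-s,s)$-plane this says exactly that $E_1$ vanishes strictly above the line $s=(t-s)-e$ of slope $+1$; since each $E_r$-term is a subquotient of $E_1$, the same vanishing persists for all $r\ge1$.

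I would then run the collapse argument. The differential $d_r$ sends the bidegree $(x,y)=(t-s,s)$ to $(x-1,y+r)$, and the spectral sequence is concentrated in nonnegative filtrations $s\ge0$. Fixing a bidegree $(x_0,y_0)$, the outgoing differential lands at $(x_0-1,y_0+r)$, whose target group vanishes by the slope-$1$ line once $y_0+r>(x_0-1)-e$, i.e.\ once $r>x_0-y_0-1-e$; the incoming differential originates at $(x_0+1,y_0-r)$, whose filtration $y_0-r$ drops below $0$ once $r>y_0$, so that source vanishes as well. Hence for all sufficiently large $r$ every differential into or out of $(x_0,y_0)$ is zero, $E_r^{s,t}$ is eventually constant in each bidegree, and the spectral sequence collapses at a finite stage. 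Eventual constancy of the $E_r$-tower gives $RE_\infty=\lim_r^1 E_r=0$ bidegreewise, and combined with the conditional convergence noted above this yields strong convergence by \cite{Boa99}*{Thm.~7.4}.

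The case of integral $\TC$ is where the real work lies, since it is absent from the estimates of Theorem~\ref{THH-TC-Cartesian}(a)--(c). Here I would exploit the defining homotopy pullback: the iterated homotopy fiber of the $s$-cube $\TC(X^s_R(A\wedge_R B,B))$ is the homotopy pullback of those of the three corners $\prod_p\TC(-;p)_p$, $\prod_p\holim_{F,m}\THH(-)^{hC_{p^m}}_p$ and $\THH(-)^{h\bT}$. The first corner has a slope-$1$ estimate by part~(c), preserved under $p$-completion and products. The two homotopy-fixed-point corners, taken separately, have iterated fibers that are not even bounded below, so the essential point is that the \emph{comparison map} between them has a homotopy fiber that again obeys a slope-$1$ estimate: this map is a $p$-adic equivalence for every prime $p$, hence its fiber is rational and governed by the arithmetic-fracture square, and verifying the resulting connectivity bound is the step I expect to be the main obstacle. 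Granting it, the homotopy pullback inherits a slope-$1$ vanishing line and the collapse argument above applies verbatim.
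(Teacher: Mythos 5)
Your treatment of the functors $K$, $\THH$, $\THH(-)_{hC}$, $\THH(-)^C$, $\TR(-;p)$, $\TF(-;p)$ and $\TC(-;p)$ is correct and amounts to the same argument as the paper's: identifying $\hofib(p_s)$ with the iterated homotopy fiber of the $s$-dimensional subcube on the $T \ni s+1$, recognizing that subcube as the Amitsur $s$-cube at $A \wedge_R B$, and reading off the connectivity from Theorems~\ref{K-Cartesian}(a) and~\ref{THH-TC-Cartesian}(a)--(c) gives exactly the bound $E_1^{s,t}=0$ for $t-s<s+c$ that the paper obtains by composing $\eta_s = p_s\circ\eta_{s+1}$ and comparing connectivities. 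The collapse and $RE_\infty=0$ argument from the bidegree of $d_r$ and nonnegativity of the filtration is also the intended one.

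The genuine gap is the case of integral $\TC$, and you have correctly located where your approach breaks but not how to fix it. The defining homotopy pullback will not give you a slope-$1$ line by inspection of its corners: for the cube $\THH(X^s)^{h\bT}$ the iterated homotopy fiber is $(\text{iterated fiber of }\THH(X^s))^{h\bT}$, and $\bT$-homotopy fixed points of a highly connected spectrum are in general unbounded below (the $E_2$-term $H^{-\sigma}(B\bT;\pi_t)$ contributes in arbitrarily negative total degrees), so neither that corner nor the fiber of the comparison map $\THH(-)^{h\bT}\to\prod_p\holim_{F,m}\THH(-)^{hC_{p^m}}_p$ --- which, being ``rational,'' is if anything \emph{worse} behaved with respect to coconnectivity --- admits the estimate you would need; the cancellation in the pullback is exactly the nontrivial content. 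The paper sidesteps all of this with a different idea: by the Dundas--Goodwillie--McCarthy theorem \cite{DGM13}*{Thm.~7.0.0.2}, since $\pi_0 X^{s+1}(T)$ is constant in $T$ (as $\eta\:R\to B$ is $1$-connected and everything is flat), the square
$$
\xymatrix{
\holim_{P^{s+1}} K(X^{s+1}) \ar[r]^-{p_s} \ar[d]_{\trc}
  &\holim_{P^s} K(X^s) \ar[d]^{\trc} \\
\holim_{P^{s+1}} \TC(X^{s+1}) \ar[r]^-{p_s}
  &\holim_{P^s} \TC(X^s)
}
$$
is homotopy Cartesian, so $\hofib(p_s)$ for $\TC$ is equivalent to $\hofib(p_s)$ for $K$ and the $E_1$-terms coincide for $s\ge1$. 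This not only closes the gap but yields the sharper constant $c=+1$ for $\TC$ (the same as for $K$), which is what Theorem~\ref{descentspseq} records; even an optimally repaired version of your fracture-square argument would at best inherit $c=-1$ from $\TC(-;p)$.
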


\begin{proof}
Let $c = 1$ when $F$ is $K$, let $c=0$ when $F$ is one of the functors
$\THH$, $\THH(-)_{hC}$, $\THH(-)^C$ or $\TR(-;p)$, and let $c=-1$ when
$F$ is $\TF(-;p)$ or $\TC(-;p)$.  Then $\eta_n \: F(A) \to \holim_{P^n}
F(X^n)$ is $(n+c)$-connected for each $n\ge1$, so $p_s \: \holim_{P^{s+1}}
F(X^{s+1}) \to \holim_{P^s} F(X^s)$ is $(s+c)$-connected for each $s\ge1$.
Thus $E_1^{s,t} = 0$ for $t-s < s+c$ and $s\ge1$.

The case $F = \TC$ remains.  For this we appeal to
\cite{DGM13}*{Thm.~7.0.0.2} (where $K(B)$ in the lower left-hand corner
of the displayed square should be replaced with $K(A)$), to see that
for each $s\ge1$ the square
$$
\xymatrix{
\holim_{P^{s+1}} K(X^{s+1}) \ar[r]^-{p_s} \ar[d]_{\trc}
  &\holim_{P^s} K(X^s) \ar[d]^{\trc} \\
\holim_{P^{s+1}} \TC(X^{s+1}) \ar[r]^-{p_s}
  &\holim_{P^s} \TC(X^s)
}
$$
is homotopy Cartesian.  This uses that $\pi_0 X^{s+1}(T)$ is constant
as a functor of $T$, by our assumptions on $R$, $A$, $B$ and~$\eta$.
Hence the spectral sequences for $K$ and $\TC$ have the same groups
$E_1^{s,t}$ for all $s\ge1$, and from the case $F = K$ we know that
these groups vanish for $t-s < s+1$.
\end{proof}

On the other hand, we have the homotopy limit spectral sequence
$$
E_2^{s,t} = {\lim_P}^s \  \pi_t F(X)
	\Longrightarrow_s \pi_{t-s} \holim_P F(X)
$$
of \cite{BK72}*{XI.7.1}, associated to $P$-shaped diagrams of spectra.
We call this the \emph{cubical homotopy limit spectral sequence}.
The abutment is isomorphic to $\pi_* F(X(\varnothing))$ when $F$
satisfies cubical descent over $X$.  We do not know whether the cubical
descent spectral sequence and the cubical homotopy limit spectral
sequence are isomorphic for all $\omega$-cubical diagrams $F(X)$, but
in Subsection~\ref{subsec:cosimpdescspseq} we will see that this is
the case whenever the diagram $F(X|P)$ arises from a cosimplicial object
$F(Y^\bullet)$ by composition with a specific functor $f \: P \to \Delta$,
which we will now introduce.

\section{Cosimplicial descent}

\subsection{Cosimplicial objects}

Let $\Delta_\eta$ be the category of finite totally ordered sets $[q]
= \{0 < \dots < q\}$ and order-preserving functions, where $q\ge-1$ is
an integer.  The object $[-1] = \varnothing$ is initial in this category.
Let $\Delta \subset \Delta_\eta$ be the full subcategory generated by the
objects $[q]$ with $q\ge0$.  For $n\ge1$, let $\Delta^{<n}_\eta \subset
\Delta_\eta$ and $\Delta^{<n} \subset \Delta$ be the respective full
subcategories generated by the objects $[q]$ with $q<n$.

A coaugmented cosimplicial object in~$\sC$ is a functor $Y \: \Delta_\eta
\to \sC$.  We let $Y^q = Y([q])$, for each $q\ge-1$.  We can also
write $Y$ as $\eta \: Y^{-1} \to Y^\bullet$, where $Y^\bullet$ is the
cosimplicial object given by the restriction of $Y$ over $\Delta \subset
\Delta_\eta$.
For $n\ge1$, a functor $Y_{<n} \: \Delta^{<n}_\eta \to \sC$ is called
a coaugmented $(n-1)$-truncated cosimplicial object, also written
as $\eta \: Y^{-1} \to Y_{<n}^\bullet$, where $Y_{<n}^\bullet =
Y_{<n}|\Delta^{<n}$.

\begin{definition}
Given any functor $F$ from $\sC$ to spectra, the composite functor $F
\circ Y$ is a coaugmented cosimplicial spectrum~$F(Y)$, which we can write
as $\eta \: F(Y^{-1}) \to F(Y^\bullet)$.  We say that $F$ satisfies
\emph{cosimplicial descent} over~$Y$ if the natural map
$$
\eta \: F(Y^{-1}) \longto \holim_{[q] \in \Delta} F(Y^q)
	= \holim_\Delta F(Y^\bullet)
$$
is an equivalence of spectra.  This ensures that the homotopy type
of $F(Y^{-1})$ is essentially determined by the homotopy types of the
spectra $F(Y^q)$ for $q\ge0$.
\end{definition}

\subsection{Comparison of cubical and cosimplicial objects}
\label{subsec:cubvscos}
% Given $T\in P_\eta$, there is a unique ordered bijection $T\cong[|T|-1]$,
% and so a functor $f_\eta \: P_\eta \to \Delta_\eta$ sending an injection
% $S\subseteq T$ to the composite $[|S|-1]\cong S\subseteq T\cong [|T|-1]$.
There is a well-defined functor $f_\eta \: P_\eta \longto \Delta_\eta$
that maps the element $T = \{t_0 < \dots < t_q\} \subset \bN$ to the
object $[q]$, and maps the inclusion $T \setminus \{t_i\} \subset T$
to the $i$-th face operator $\delta^i \: [q-1] \to [q]$, for each $0 \le
i \le q$.
% This determines the functor $f_\eta$ on general inclusions $T'
% \subseteq T$.
By restricting $f_\eta$, one gets functors $f \: P \to \Delta$, $f^n_\eta
\: P^n_\eta \to \Delta^{<n}_\eta$ and $f^n \: P^n \to \Delta^{<n}$.

Composition with $f_\eta \: P_\eta \to \Delta_\eta$ takes each
coaugmented cosimplicial spectrum $F(Y)$ to an $\omega$-cube $F(X) =
F(Y) \circ f_\eta$ in the category of spectra.  Likewise, composition
with $f^n_\eta \: P^n_\eta \to \Delta^{<n}_\eta$ takes each coaugmented
$(n-1)$-truncated cosimplicial spectrum $F(Y_{<n})$ to an $n$-cube
$F(X^n) = F(Y_{<n}) \circ f^n_\eta$ of spectra.  If $F(Y_{<n})$ is given
by restricting $F(Y)$ over $\Delta^{<n}_\eta \subset \Delta_\eta$ then
$F(X^n)$ is given by restricting $F(X)$ over $P^n_\eta \subset P_\eta$.

\begin{proposition} \label{Delta-vs-P}
The functors $f \: P \to \Delta$ and $f^n \: P^n \to \Delta^{<n}$
are left cofinal.  Hence, for any cosimplicial spectrum $F(Y^\bullet)$
the canonical map
$$
f^* \: \holim_{\Delta} F(Y^\bullet) \overset{\simeq}\longto
	\holim_P \, (F(Y^\bullet) \circ f)
$$
is an equivalence, and for any $(n-1)$-truncated cosimplicial spectrum
$F(Y_{<n}^\bullet)$ the canonical map
$$
f_n^* \: \holim_{\Delta^{<n}} F(Y_{<n}^\bullet) \overset{\simeq}\longto
	\holim_{P^n} \, (F(Y_{<n}^\bullet) \circ f^n)
$$
is an equivalence.
\end{proposition}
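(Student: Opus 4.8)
The plan is to verify the standard cofinality criterion: a functor is left cofinal precisely when each of its comma categories has a contractible nerve, and such functors preserve homotopy limits, see~\cite{BK72}*{XI.9}. Thus both displayed equivalences follow formally once left cofinality of $f$ and $f^n$ is established, and the whole proposition reduces to a single contractibility statement. Concretely, for $[q] \in \Delta$ I would analyze the comma category $f \downarrow [q]$, whose objects are pairs $(T,\alpha)$ with $T \in P$ and $\alpha \colon f(T) = [\,|T|-1\,] \to [q]$ order-preserving, equivalently a monotone map $\alpha \colon T \to [q]$, and whose morphisms $(T,\alpha) \to (T',\alpha')$ are the inclusions $T \subseteq T'$ with $\alpha'|_T = \alpha$; the analogous description holds for $f^n \downarrow [q]$ with $T \subseteq \{1,\dots,n\}$ and $q<n$. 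As a warm-up, the case $q=0$ recovers $P$ (resp.\ $P^n$) itself, which is directed (resp.\ has the barycentric subdivision of $\Delta^{n-1}$ as its nerve), hence contractible.

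First I would reduce the infinite case to the finite one. Since $f \downarrow [q] = \colim_n (f^n \downarrow [q])$ is a filtered colimit of full subcategory inclusions induced by $\{1,\dots,n\} \subseteq \{1,\dots,n+1\}$, and the nerve commutes with filtered colimits, it suffices to show that $f^n \downarrow [q]$ is contractible for every $q<n$; the contractibility of the tail ($n>q$) then forces contractibility of the union.

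For the finite case, the key is to recognize $f^n \downarrow [q]$ as the category of elements of the presheaf $T \mapsto \mathrm{Hom}_\Delta(f(T),[q]) = (\Delta^q)_{|T|-1}$ on $P^n$, which records the simplices of the standard $q$-simplex $\Delta^q$ indexed by the subdivision poset $P^n$. Using Thomason's description of the classifying space of a Grothendieck construction, together with the identification of $P^n$ with the face poset of $\Delta^{n-1}$, I would show that this classifying space is homotopy equivalent to $|\sk_{n-1}\Delta^q|$. The hypothesis $q<n$ enters exactly here: only the simplices of $\Delta^q$ of dimension $|T|-1 \le n-1$ occur, so $q \le n-1$ guarantees $\sk_{n-1}\Delta^q = \Delta^q$ and hence $|\Delta^q| \simeq *$. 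Dropping the hypothesis genuinely breaks contractibility; for instance $f^1 \downarrow [1]$ is a two-point discrete category, matching $\sk_0\Delta^1$.

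The main obstacle is this finite contractibility statement, and within it the precise bookkeeping of the identification: one must track how the monotone maps $\alpha$ assemble over the subset poset $P^n$ into the simplices of $\Delta^q$, and check that the subdivision-theoretic comparison is compatible with the face structure, so that exactly $\sk_{n-1}\Delta^q$ — and no smaller skeleton — appears. Once this is in hand, left cofinality of $f^n$ and, via the filtered colimit, of $f$ follows, and the two homotopy-limit equivalences are then a formal consequence of the cofinality criterion of \cite{BK72}.
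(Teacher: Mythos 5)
Your overall architecture coincides with the paper's: both displayed equivalences are reduced to left cofinality via the cofinality theorem of \cite{BK72}*{XI.9.2}, left cofinality of $f$ is reduced to that of the $f^n$ by writing each left fiber $f/[q]$ as the increasing union of the $f^n/[q]$, and everything therefore hinges on contractibility of the finite fibers for $q<n$. Where you genuinely diverge is at that last, decisive step: the paper simply cites \cite{Car08}*{\S6} for the contractibility of $f^n/[q]$ (warning that the argument offered at this point in \cite{DGM13}*{A8.1.1} is flawed), whereas you propose to prove it directly by identifying the homotopy type of the fiber. Your proposed identification is correct: $f^n\downarrow[q]$ is the face poset of the simplicial complex whose vertices are the pairs $(t,j)\in\{1,\dots,n\}\times[q]$ and whose faces are the graphs of partial monotone maps $T\to[q]$ with $\varnothing\ne T\subseteq\{1,\dots,n\}$; its nerve is the barycentric subdivision of that complex, and the complex is homotopy equivalent to $\sk_{n-1}\Delta^q$ --- contractible exactly when $q\le n-1$, and a wedge of $\binom{q}{n}$ copies of $S^{n-1}$ otherwise, consistent with your $f^1\downarrow[1]$ example. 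So your route, if completed, buys a self-contained proof of the one nontrivial input that the paper outsources; what it currently lacks is precisely that completion, since the ``bookkeeping'' you defer is the entire mathematical content of the citation to Carlsson, and as written the proposal is a correct plan rather than a proof. To finish it along your lines, one can avoid Thomason and subdivision comparisons altogether and induct on $n$: the closed star of the vertex $(n,j)$ is the cone on the subcomplex of partial monotone maps $\{1,\dots,n-1\}\supseteq T\to\{0,\dots,j\}$, two such stars meet only inside the subcomplex of maps undefined at $n$, and for $q\le n-1$ all pieces and all their intersections in this closed cover are contractible by induction (with the case $j=q=n-1$ coning off the whole subcomplex of maps undefined at $n$), so a nerve-of-cover argument gives contractibility.
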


\begin{proof}
The assertion that $f^n$ is left cofinal, i.e., that the left fiber
$f_n/[q]$ has contractible nerve for each object $[q]$ of $\Delta^{<n}$,
is proved in \cite{Car08}*{\S6}.  Note that the argument offered at
this point in \cite{DGM13}*{A8.1.1} is flawed.  The left fiber $f/[q]$
is the increasing union of the left fibers $f_n/[q]$, hence its nerve
is also contractible.  The equivalences of homotopy limits then follow
from the cofinality theorem of \cite{BK72}*{XI.9.2}.
\end{proof}

\begin{corollary} \label{cosimp-vs-cube}
Let $Y$ be a coaugmented cosimplicial object in $\sC$, and let $F$
be a functor from $\sC$ to spectra.  Then $F$ satisfies cosimplicial
descent over $Y$ if and only if $F$ satisfies cubical descent over $X
= Y \circ f_\eta$.
\end{corollary}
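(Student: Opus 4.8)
The plan is to reduce the corollary to Proposition~\ref{Delta-vs-P} by matching up both ends of the cubical coaugmentation map with the corresponding ends of the cosimplicial one. First I would unwind the definition of $X = Y \circ f_\eta$ at the initial vertex. The empty subset $\varnothing \in P_\eta$ is $\{t_0 < \dots < t_q\}$ with no elements, so $q = -1$ and $f_\eta(\varnothing) = [-1]$. Hence the initial vertex of the $\omega$-cube is $X(\varnothing) = Y([-1]) = Y^{-1}$, so that $F(X(\varnothing)) = F(Y^{-1})$. Restricting $f_\eta$ to the nonempty subsets gives $f \: P \to \Delta$, and the restriction of $F(X)$ over $P$ is exactly $F(Y^\bullet) \circ f$.

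Next I would invoke Proposition~\ref{Delta-vs-P}, which supplies the equivalence
$$
f^* \: \holim_\Delta F(Y^\bullet) \overset{\simeq}\longto
	\holim_P (F(Y^\bullet) \circ f) = \holim_P F(X) \,.
$$
Thus the two homotopy limits that appear in the cosimplicial and cubical descent conditions are canonically equivalent, and it remains to compare the maps into them.

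The step that needs genuine care is checking that the two natural maps labeled $\eta$ agree under this equivalence, i.e., that the square with top edge the identity $F(Y^{-1}) = F(X(\varnothing))$, bottom edge $f^*$, left edge the cosimplicial $\eta$, and right edge the cubical $\eta$, commutes. Both maps $\eta$ are the canonical maps out of the value at the initial object, induced respectively by the morphisms $\varnothing \subset T$ in $P_\eta$ and the unique maps $[-1] \to [q]$ in $\Delta_\eta$. Since $f_\eta$ carries each $\varnothing \subset T$ to the corresponding $[-1] \to [q]$, the cubical coaugmentation is obtained from the cosimplicial one by precomposition with $f_\eta$, so the square commutes by functoriality; the only real work is to keep careful track of the canonical maps into the respective homotopy limits.

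Finally, because $f^*$ is an equivalence and the square commutes, the cubical $\eta$ is an equivalence if and only if the cosimplicial $\eta$ is. By the definitions of cubical and cosimplicial descent, this is precisely the assertion that $F$ satisfies cubical descent over $X$ if and only if $F$ satisfies cosimplicial descent over $Y$.
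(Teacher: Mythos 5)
Your proposal is correct and follows exactly the route the paper intends: the corollary is stated without proof as an immediate consequence of Proposition~\ref{Delta-vs-P}, and your writeup simply makes explicit the identifications $X(\varnothing)=Y^{-1}$, $F(X|P)=F(Y^\bullet)\circ f$, and the compatibility of the two coaugmentation maps under the cofinality equivalence $f^*$. The commuting square you single out is indeed the only point requiring any care, and your justification by functoriality of the cone maps is the right one.
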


% \begin{proof}
% The natural map $\eta \: F(Y^{-1}) \to \holim_\Delta F(Y^\bullet)$ is
% an equivalence if and only if the natural map $\eta \: F(X(\varnothing))
% \to \holim_P F(X)$ is an equivalence.
% \end{proof}

\begin{corollary} \label{f-fn-lims-iso}
For any cosimplicial abelian group $A^\bullet$, the canonical homomorphisms
$$
f^* \: \pi^s A^\bullet \cong {\lim_\Delta}^s A^\bullet
	\overset{\cong}\longto {\lim_P}^s \, (A^\bullet \circ f)
\qquad\text{and}\qquad
f_n^* \: {\lim_{\Delta^{<n}}}^s A^\bullet
	\overset{\cong}\longto {\lim_{P^n}}^s \, (A^\bullet \circ f^n)
$$
are isomorphisms, for each $s\ge0$.
\end{corollary}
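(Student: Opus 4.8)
The plan is to deduce this purely formal statement from Proposition~\ref{Delta-vs-P} by replacing the cosimplicial abelian group with an associated diagram of Eilenberg--Mac Lane spectra, so that each derived limit $\lim^s$ becomes a homotopy group of a homotopy limit. First I would recall the classical identification $\pi^s A^\bullet \cong {\lim_\Delta}^s A^\bullet$ of the cohomotopy of a cosimplicial abelian group with its derived limit over $\Delta$; the remaining content is then exactly the assertion that the restriction maps $f^*$ and $f^*_n$ induce isomorphisms on the derived limit functors, for every $s \ge 0$.

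To this end I would apply the Eilenberg--Mac Lane functor $H$ from abelian groups to spectra levelwise, producing for any small indexing category $I$ and any functor $D \: I \to$ abelian groups a diagram $HD$ of spectra with $\pi_t HD = D$ for $t = 0$ and $\pi_t HD = 0$ otherwise. The homotopy limit spectral sequence \cite{BK72}*{XI.7.1} of $HD$ then reads
$$
E_2^{s,t} = {\lim_I}^s \pi_t(HD) \Longrightarrow_s \pi_{t-s} \holim_I HD \,.
$$
Because $\pi_t HD$ vanishes outside $t = 0$, this spectral sequence is concentrated on the line $t = 0$, so all differentials vanish and there is no extension problem; it collapses to a natural isomorphism $\pi_{-s} \holim_I HD \cong {\lim_I}^s D$ for every $s \ge 0$. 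Applying this with $(I,D)$ equal to $(\Delta, A^\bullet)$, to $(P, A^\bullet \circ f)$, and to the truncated pairs $(\Delta^{<n}, A^\bullet)$ and $(P^n, A^\bullet \circ f^n)$ identifies each of the four derived limits appearing in the statement with a homotopy group of a homotopy limit of Eilenberg--Mac Lane spectra.

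Now $HA^\bullet \circ f = H(A^\bullet \circ f)$ and $HA^\bullet \circ f^n = H(A^\bullet \circ f^n)$, so Proposition~\ref{Delta-vs-P} supplies equivalences
$$
f^* \: \holim_\Delta HA^\bullet \overset{\simeq}\longto \holim_P H(A^\bullet \circ f)
\qquad\text{and}\qquad
f^*_n \: \holim_{\Delta^{<n}} HA^\bullet \overset{\simeq}\longto \holim_{P^n} H(A^\bullet \circ f^n) \,.
$$
Passing to $\pi_{-s}$ and combining with the collapse isomorphisms from the previous paragraph then shows the comparison homomorphisms on derived limits to be isomorphisms for all $s \ge 0$.

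The argument is essentially formal, and the only point requiring care is the last one: I must check that the equivalence of homotopy limits furnished by Proposition~\ref{Delta-vs-P} induces, on $\pi_{-s}$, precisely the \emph{canonical} comparison maps $f^*$ and $f^*_n$ on derived limits, rather than merely some isomorphism. This is ensured by the naturality of both the Eilenberg--Mac Lane construction and the collapsing homotopy limit spectral sequence with respect to the functors $f$ and $f^n$ of indexing categories, so I expect no genuine obstacle. Alternatively one could bypass spectra entirely and argue directly that a left cofinal functor---one whose left fibers have contractible, not merely connected, nerve---induces isomorphisms on all higher derived limits; but the Eilenberg--Mac Lane reduction keeps the corollary within the homotopy-limit framework already established for Proposition~\ref{Delta-vs-P}.
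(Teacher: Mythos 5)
Your argument is correct, but it runs in the opposite direction from the paper's. The paper disposes of this corollary with a bare citation to \cite{BK72}*{XI.7.2 and XI.7.3}: XI.7.2 is the identification $\pi^s A^\bullet \cong {\lim_\Delta}^s A^\bullet$, and XI.7.3 is the purely algebraic cofinality theorem asserting that a left cofinal functor induces isomorphisms on all derived functors ${\lim}^s$ of diagrams of abelian groups. That is exactly the ``alternative'' you mention in your last sentence, and it is the shorter path: the algebraic statement is already in the literature and needs no detour through spectra. Your route instead deduces the algebraic cofinality from the homotopy-theoretic one (Proposition~\ref{Delta-vs-P}) by realizing $A^\bullet$ as a diagram of Eilenberg--Mac Lane spectra and observing that the homotopy limit spectral sequence collapses onto the row $t=0$, giving $\pi_{-s}\holim_I HD \cong {\lim_I}^s D$. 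This is sound --- with the $E_2$-term concentrated in a single row all differentials vanish, the relevant $\lim^1$ terms of the Tot tower of the cosimplicial replacement vanish because the tower of homotopy groups stabilizes, and naturality of $H$ and of the spectral sequence identifies the induced map with the canonical $f^*$ --- and it has the virtue of being self-contained given Proposition~\ref{Delta-vs-P}. What it costs is length and a convergence check that the purely algebraic citation avoids entirely; what it buys is that you never need to know the algebraic form of the cofinality theorem, only the homotopy-limit form already established.
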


\begin{proof}
See \cite{BK72}*{XI.7.2 and XI.7.3}.
\end{proof}

% [[Consider describing $\lim_{\Delta^{<n}}^s A^\bullet$ as
% $\pi^s$ of a truncation of $A^\bullet$.]]

\subsection{Amitsur resolutions}
\label{subsec:amres}

We now suppose that $R$ is a connective commutative $\bS$-algebra, that
$A$ is a connective $R$-algebra, and that $B$ is a connective commutative
$R$-algebra with unit map $\eta \: R \to B$ and multiplication map $\mu \:
B \wedge_R B \to B$.  The condition that $B$ is commutative ensures that
$\mu$ is a morphism of connective $R$-algebras.  We can assume that $A$
is $R$-cofibrant as an $R$-algebra, and that $B$ is $R$-com-alg-cofibrant
as a commutative $R$-algebra in the sense of \cite{Shi04}*{Thm.~3.2}.
(This follows if $B$ is positive stable cofibrant in the sense of
\cite{MMSS01}*{Thm.~15.2(i)}.)
The underlying $R$-modules of $A$ and~$B$ are then flat, by
\cite{Shi04}*{Cor.~4.3}, so that the results of Section~\ref{sec:cubdesc}
carry over to this situation.
% [[Is it a problem that $B$ is not $R$-cofibrant as an $R$-algebra?]]

Consider the cosimplicial connective $R$-algebra $Y^\bullet =
Y^\bullet_R(A,B) \: [q] \mapsto Y^q$ given by
\begin{equation} \label{YRAB}
Y^q = A \wedge_R B \wedge_R \dots \wedge_R B \,,
\end{equation}
with $(q+1)$ copies of~$B$.  The coface maps
$$
d^i = id_A \wedge (id_B)^{\wedge i} \wedge \eta
	\wedge (id_B)^{\wedge q-i} \: Y^{q-1} \longto Y^q
$$
for $0 \le i \le q$, and the codegeneracy maps
$$
s^j = id_A \wedge (id_B)^{\wedge j} \wedge \mu
	\wedge (id_B)^{\wedge q-j} \: Y^{q+1} \longto Y^q
$$
for $0 \le j \le q$, are induced by the unit map~$\eta$ and the
multiplication map~$\mu$, respectively.  The unit map~$\eta$ also
induces a coaugmentation $\eta \: A \to Y^0 = A \wedge_R B$ that makes
$A \to Y^\bullet$ a coaugmented cosimplicial connective $R$-algebra,
i.e., a functor $Y = Y_R(A,B)$ from $\Delta_\eta$ to the category of
connective $R$-algebras:
$$
\xymatrix{
A \ar[r]^-{\eta}
& A \wedge_R B \ar@<0.8ex>[r] \ar@<-0.8ex>[r]
	& A \wedge_R B \wedge_R B \ar[l] \ar@<1.6ex>[r] \ar[r]
	\ar@<-1.6ex>[r] & \ \cdots \ar@<0.8ex>[l] \ar@<-0.8ex>[l] \,.
}
$$

\begin{definition}
Given a functor $F$ from connective $R$-algebras to spectra, we call
the coaugmented cosimplicial spectrum $\eta \: F(A) \to F(Y^\bullet) =
F(Y^\bullet_R(A,B))$ the \emph{Amitsur resolution} for $F$ at $A$ along
$\eta \: R \to B$.  When $F$ satisfies cosimplicial descent over $A \to
Y^\bullet$ we say that $F$ satisfies \emph{cosimplicial descent at $A$
along $R \to B$}.
\end{definition}

Cosimplicial descent for $F$ at $A$ along $R \to B$ ensures that
the coaugmented cosimplicial spectrum
$$
\xymatrix{
F(A) \ar[r]^-{\eta}
& F(A \wedge_R B) \ar@<0.8ex>[r] \ar@<-0.8ex>[r]
        & F(A \wedge_R B \wedge_R B) \ar[l] \ar@<1.6ex>[r] \ar[r]
        \ar@<-1.6ex>[r] & \ \cdots \ar@<0.8ex>[l] \ar@<-0.8ex>[l]
}
$$
induces an equivalence
$$
\eta \: F(A) \overset{\simeq}\longto \holim_{[q] \in \Delta} F(Y^q)
	= \holim_\Delta F(Y^\bullet)
$$
from $F(A)$ to the homotopy limit of the remainder of the diagram,
having entries of the form $F(A \wedge_R B \wedge_R \dots \wedge_R B)$
with one or more copies of~$B$.

\begin{lemma} \label{X-is-Yf}
There is a natural isomorphism $X^\omega_R(A, B) \cong Y_R(A, B)
\circ f_\eta$.
\end{lemma}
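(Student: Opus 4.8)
The plan is to build the isomorphism pointwise from the canonical unit isomorphisms for $\wedge_R$, and then to promote it to a natural transformation by checking compatibility only with the generating morphisms of $P_\eta$. Fix a nonempty finite subset $T = \{t_0 < \dots < t_q\} \subset \bN$, so that $f_\eta(T) = [q]$. By definition $X^\omega(T) = A \wedge_R X_{1,T} \wedge_R X_{2,T} \wedge_R \cdots$, where the factor indexed by $i$ is $B$ when $i \in T$ and $R$ otherwise, and all but finitely many factors equal $R$. Since $R$ is the unit for $\wedge_R$, the canonical isomorphisms $C \wedge_R R \cong C \cong R \wedge_R C$ let me delete every factor equal to $R$. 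Because $t_0 < t_1 < \dots < t_q$, the surviving $B$-factors already occur in their correct order, so no transposition is needed, and I obtain a canonical isomorphism
$$
\theta_T \: X^\omega(T) \overset{\cong}\longto A \wedge_R \underbrace{B \wedge_R \cdots \wedge_R B}_{q+1} = Y^q = (Y_R(A,B) \circ f_\eta)(T) \,.
$$
The same recipe, with the empty smash product, handles $T = \varnothing$, where both sides are $A = Y^{-1}$. Each $\theta_T$ is an isomorphism of connective $R$-algebras, since the unit and associativity isomorphisms of the symmetric monoidal category of $R$-modules (respectively $R$-algebras) are $R$-algebra isomorphisms.

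Next I must check that the $\theta_T$ assemble into a natural transformation, i.e., that for every inclusion $T' \subseteq T$ the square comparing the structure map $X^\omega(T') \to X^\omega(T)$ with $Y^{f_\eta(T')} \to Y^{f_\eta(T)}$ commutes. Since $P_\eta$ is a poset, all of whose morphisms are inclusions, and every inclusion factors as a composite of elementary ones that adjoin a single element, functoriality of both $X^\omega$ and $Y_R(A,B) \circ f_\eta$ reduces the verification to the elementary inclusions $T \setminus \{t_i\} \subset T$ for $0 \le i \le q$. By the definition of $f_\eta$ this inclusion maps to the coface operator $\delta^i \: [q-1] \to [q]$, so on the right the relevant map is $d^i = id_A \wedge (id_B)^{\wedge i} \wedge \eta \wedge (id_B)^{\wedge q-i}$. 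On the left, the structure map $X^\omega(T \setminus \{t_i\}) \to X^\omega(T)$ is $id_A$ smashed with $\eta \: R \to B$ in the factor indexed by $t_i$ and with identities elsewhere. Under $\theta$, collapsing the $R$-factors identifies the factor indexed by $t_i$ with the copy of $B$ in cosimplicial position $i$, so the left-hand map is carried exactly to $d^i$ and the square commutes.

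The only real content is the bookkeeping of positions, and this is where I expect to be most careful: I must track that $t_i$ is precisely the element at $0$-indexed position $i$ of $T$, so that inserting $\eta$ at the natural-number slot $t_i$ becomes insertion of $\eta$ in coface position $i$ once the interspersed $R$-factors have been deleted. I note also that, since every morphism of $P_\eta$ is a monomorphism, $f_\eta$ never hits a codegeneracy; consequently $Y_R(A,B) \circ f_\eta$ involves only the coface maps $d^i$, and in particular the argument uses neither the multiplication $\mu$ nor the commutativity of $B$ (these are needed elsewhere only to guarantee that $X^\omega(T)$ and $Y^q$ are $R$-algebras and that $\mu$ is an $R$-algebra map). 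Having verified the coface squares, naturality for all inclusions follows by pasting, and the family $\{\theta_T\}$ is the desired natural isomorphism $X^\omega_R(A,B) \cong Y_R(A,B) \circ f_\eta$.
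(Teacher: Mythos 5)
Your proof is correct and is essentially the paper's own argument, just carried out in more detail: both identify $X^\omega(T)$ with $Y^{q}$ by cancelling the unit factors $R$ via the canonical isomorphisms, and then check that the structure maps for inclusions $T' \subseteq T$ match the coface maps under this identification. The paper states this compatibility as "evident," whereas you spell out the reduction to elementary inclusions and the position bookkeeping sending the slot $t_i$ to coface position $i$; this is a faithful elaboration, not a different route.
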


\begin{proof}
For $T = \{t_0 < \dots < t_q\} \subset \bN$, with $f_\eta(T) = [q]$,
both
$X^\omega_R(A, B)(T)$ and $Y_R(A, B)([q])$ are identified with
$$
A \wedge_R B \wedge_R \dots \wedge_R B \,,
$$
where there are $(q+1)$ copies of~$B$.  For each $T' \subset T$, the
induced morphisms in $X^\omega_R(A, B)$ and $Y_R(A, B) \circ f_\eta$
are evidently compatible with these identifications.
\end{proof}

\begin{theorem} \label{K-THH-TC-descent}
Let $R$ be a connective commutative $\bS$-algebra, $A$ a connective
$R$-algebra and $B$ a connective commutative $R$-algebra, and suppose
that the unit map $\eta \: R \to B$ is $1$-connected.  Then algebraic
$K$-theory, topological Hochschild homology, $\THH(-)_{hC}$, $\THH(-)^C$,
$\TR(-;p)$, $\TF(-;p)$, $\TC(-;p)$, % $\THH(-)^{hC}$, $\THH(-)^{h\bT}$
and (integral) topological cyclic homology all satisfy cosimplicial
descent at $A$ along $R \to B$.
% In particular,
% $$
% \eta \: K(A) \overset{\simeq}\longto \holim_\Delta K(Y^\bullet)
% \quad
% \eta \: \THH(A) \overset{\simeq}\longto \holim_\Delta \THH(Y^\bullet)
% \quad\text{and}\quad
% \eta \: \TC(A) \overset{\simeq}\longto \holim_\Delta \TC(Y^\bullet)
% $$
% are all equivalences, where $Y^\bullet = Y^\bullet_R(A,B)$.
\end{theorem}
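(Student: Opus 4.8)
The plan is to reduce cosimplicial descent to the cubical descent already proved in Theorems~\ref{K-Cartesian} and~\ref{THH-TC-Cartesian}, by exploiting the comparison between $\omega$-cubes and coaugmented cosimplicial objects set up in Subsection~\ref{subsec:cubvscos}. The key observation is that the cosimplicial Amitsur resolution $Y = Y_R(A,B)$ and the cubical Amitsur cube $X^\omega = X^\omega_R(A,B)$ encode the same underlying diagram: by Lemma~\ref{X-is-Yf} there is a natural isomorphism $X^\omega_R(A,B) \cong Y_R(A,B) \circ f_\eta$. Applying any functor $F$ from connective $R$-algebras to spectra, one therefore gets $F(X^\omega) \cong F(Y) \circ f_\eta$, so the Amitsur cube for $F$ is the composite of the Amitsur resolution for $F$ with $f_\eta \: P_\eta \to \Delta_\eta$.

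First I would invoke Corollary~\ref{cosimp-vs-cube}, which states that $F$ satisfies cosimplicial descent over $Y$ if and only if it satisfies cubical descent over $X = Y \circ f_\eta$. In view of the identification above, this says exactly that each functor $F$ in the list satisfies cosimplicial descent at $A$ along $R \to B$ if and only if it satisfies cubical descent at $A$ along $R \to B$. Next I would check that the hypotheses of the present theorem are a special case of those of the cubical theorems: $A$ is a connective $R$-algebra, $B$ is in particular a connective $R$-algebra, and $\eta \: R \to B$ is $1$-connected, while the required flatness and cofibrancy are recorded at the opening of Subsection~\ref{subsec:amres} via \cite{Shi04}. Then Theorem~\ref{K-Cartesian}(b) supplies cubical descent for $K$, and Theorem~\ref{THH-TC-Cartesian}(d) supplies cubical descent for $\THH$, $\THH(-)_{hC}$, $\THH(-)^C$, $\TR(-;p)$, $\TF(-;p)$, $\TC(-;p)$ and integral $\TC$. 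Combining this with the equivalence from Corollary~\ref{cosimp-vs-cube} yields cosimplicial descent for all of these functors, which is the assertion.

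I do not anticipate a real obstacle, since all of the analytic work---the connectivity estimates of Lemma~\ref{X-is-id-cartesian}, the $K$-theoretic input of \cite{Dun97}, and the trace-method arguments for $\THH$ and $\TC$---has already been carried out in the cubical setting, and the cofinality of $f \: P \to \Delta$ from Proposition~\ref{Delta-vs-P} does the remaining homotopy-limit comparison. The only point requiring care is bookkeeping: the commutativity of $B$ is used here to build the codegeneracies $s^j$ from the multiplication $\mu$, and hence to form the cosimplicial object $Y_R(A,B)$, but this extra structure is simply discarded upon restricting to the cube $X^\omega$, so the cubical hypotheses remain satisfied. The mildest subtlety is ensuring that the stronger $R$-com-alg-cofibrancy imposed on $B$ still yields flat underlying $R$-modules, so that the homotopy invariance used throughout Section~\ref{sec:cubdesc} applies; this is exactly guaranteed by \cite{Shi04}*{Cor.~4.3}.
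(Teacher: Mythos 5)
Your proposal is correct and follows exactly the paper's own argument: the paper's proof is the one-line combination of Theorems~\ref{K-Cartesian} and~\ref{THH-TC-Cartesian} with Corollary~\ref{cosimp-vs-cube} and Lemma~\ref{X-is-Yf}, which is precisely the reduction you spell out. The bookkeeping points you raise (commutativity of $B$ only being needed to build the codegeneracies, and flatness via \cite{Shi04}*{Cor.~4.3}) match the discussion at the start of Subsection~\ref{subsec:amres}.
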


\begin{proof}
Combine Theorems~\ref{K-Cartesian} and~\ref{THH-TC-Cartesian},
Corollary~\ref{cosimp-vs-cube} and Lemma~\ref{X-is-Yf}.
\end{proof}

\subsection{More spectral sequences}
\label{subsec:cosimpdescspseq}

For each coaugmented cosimplicial spectrum $\eta \: F(Y^{-1}) \to
F(Y^\bullet)$, the equivalent homotopy limits
$$
\holim_\Delta F(Y^\bullet) \simeq 
	\holim_n \, \holim_{\Delta^{<n}} F(Y^\bullet)
$$
give rise to two spectral sequences.  These turn out to be
isomorphic to one another, as well as to the two spectral sequences of
Subsection~\ref{subsec:cubdescspseq}, when we consider cubical diagrams
that arise from cosimplicial diagrams by composition with the left
cofinal functor~$f$.
% [[We simply write $F(Y^\bullet)$ in place of $F(Y^\bullet_{<n})$ when it
% is clear that restriction over $\Delta^{<n} \subset \Delta$ is intended.]]

On the one hand, we have the homotopy spectral sequence
$$
E_1^{s,t} = \pi_{t-s} \hofib(\delta_s)
        \Longrightarrow_s
	\pi_{t-s}(\holim_n \, \holim_{\Delta^{<n}} F(Y^\bullet))
$$
associated to the tower of fibrations
\begin{equation} \label{Delta-tower}
\dots \to \holim_{\Delta^{<s+1}} F(Y^\bullet) \overset{\delta_s}\longto
        \holim_{\Delta^{<s}} F(Y^\bullet) \to \dots \,,
\end{equation}
which we call the \emph{cosimplicial descent spectral sequence}.

By Proposition~\ref{Delta-vs-P}, the tower of
fibrations~\eqref{Delta-tower} is equivalent to the tower~\eqref{P-tower}
when $X = Y \circ f_\eta$, $F(X) = F(Y) \circ f_\eta$ and $F(X|P)
= F(Y^\bullet) \circ f$.  Hence in these cases the cosimplicial descent
spectral sequence for $\eta \: F(Y^{-1}) \to F(Y^\bullet)$ is isomorphic
to the cubical descent spectral sequence for $F(X)$.

There is also the \emph{Bousfield--Kan homotopy spectral sequence}
$$
E_1^{s,t} = \pi_{t-s} \hofib(\tau_s)
	\Longrightarrow_s \pi_{t-s} \Tot F(Y^\bullet)
$$
of the cosimplicial spectrum $F(Y^\bullet)$, associated to
the tower of maps
\begin{equation} \label{Tot-tower}
\dots \to \Tot_s F(Y^\bullet) \overset{\tau_s}\longto
	\Tot_{s-1} F(Y^\bullet) \to \dots \,,
\end{equation}
see~\cite{BK72}*{X.6.1}.  To ensure that the maps $\tau_s$ are fibrations,
we first implicitly replace $F(Y^\bullet)$ with an equivalent fibrant
cosimplicial spectrum $F(Y^\bullet)'$ \cite{BK72}*{X.4.6}.  This fibrant
replacement does not change the homotopy type of $F(X|P) = F(Y^\bullet)
\circ f$ or any of the associated homotopy limits that we consider.
The $E_2$-term of this homotopy spectral sequence can then be expressed as
$$
E_2^{s,t} = \pi^s \pi_t F(Y^\bullet)
	\Longrightarrow_s \pi_{t-s} \Tot F(Y^\bullet) \,,
$$
see~\cite{BK72}*{X.7.2}.
We prove in Proposition~\ref{tot-n-as-holim} that there are natural
equivalences
$$
\Tot_n F(Y^\bullet)
	\overset{\simeq}\longto \holim_{\Delta^{<n+1}} F(Y^\bullet)
$$
(after the implicit fibrant replacement of $F(Y^\bullet)$), compatible
for varying~$n$ with the equivalence
$$
\Tot F(Y^\bullet)
        \overset{\simeq}\longto \holim_{\Delta} F(Y^\bullet)
$$
of \cite{BK72}*{XI.4.4}.  Hence the tower~\eqref{Tot-tower} is equivalent
to the tower~\eqref{Delta-tower}, and the cosimplicial descent spectral
sequence is isomorphic to the Bousfield--Kan homotopy spectral sequence
of $F(Y^\bullet)$, starting with the $E_1$-term.

On the other hand, we have the \emph{cosimplicial homotopy limit spectral
sequence}
$$
E_2^{s,t} = {\lim_\Delta}^s \, \pi_t F(Y^\bullet)
	\Longrightarrow_s \pi_{t-s} \holim_\Delta F(Y^\bullet) \,.
$$
By \cite{BK72}*{X.7.5} there is a natural map from the Bousfield--Kan
homotopy spectral sequence of $F(Y^\bullet)$ to the cosimplicial homotopy
limit spectral sequence, and this map is an isomorphism at the $E_2$-term,
hence also at all later terms.

Finally, the functor $f \: P \to
\Delta$ induces a map $f^*$ of homotopy limit spectral sequences from
$$
E_2^{s,t} = {\lim_\Delta}^s \, \pi_t F(Y^\bullet)
	\Longrightarrow_s \pi_{t-s} \holim_\Delta F(Y^\bullet)
$$
to
$$
E_2^{s,t} = {\lim_P}^s \, \pi_t F(X)
	\Longrightarrow_s \pi_{t-s} \holim_P F(X) \,,
$$
where $F(X|P) = F(Y^\bullet) \circ f$.
This is the map of Bousfield--Kan spectral sequences associated to a
canonical map $f^* \: \Pi^\bullet_\Delta F(Y^\bullet) \to \Pi^\bullet_P
F(X|P)$ of cosimplicial spectra, see~\cite{BK72}*{XI.5.1}, so the map of
$E_2$-terms is the isomorphism $f^*$ of Corollary~\ref{f-fn-lims-iso}.
Hence in these cases the cosimplicial homotopy limit spectral sequence
for $F(Y^\bullet)$ is isomorphic to the cubical homotopy limit spectral
sequence for $F(X|P)$.

\begin{proposition}
(a)
Let $F(Y^\bullet)$ be any cosimplicial spectrum, with fibrant replacement
$F(Y^\bullet)'$, and let $F(X|P) = F(Y^\bullet) \circ f$.  The first
three of the spectral sequences
\begin{alignat*}{2}
E_1^{s,t} &= \pi_{t-s} \hofib(p_s) \Longrightarrow_s
	\pi_{t-s}(\holim_n \, \holim_{P^n} F(X))
\qquad && \text{(cubical descent)} \\
E_1^{s,t} &= \pi_{t-s} \hofib(\delta_s) \Longrightarrow_s
	\pi_{t-s}(\holim_n \, \holim_{\Delta^{<n}} F(Y^\bullet))
\qquad && \text{(cosimplicial descent)} \\
E_1^{s,t} &= \pi_{t-s} \hofib(\tau_s) \,,\,
E_2^{s,t} = \pi^s \, \pi_t F(Y^\bullet) \Longrightarrow_s
        \pi_{t-s} \Tot F(Y^\bullet)'
\qquad && \text{(Bousfield--Kan)} \\
E_2^{s,t} &= {\lim_\Delta}^s \, \pi_t F(Y^\bullet) \Longrightarrow_s
	\pi_{t-s} \holim_\Delta F(Y^\bullet)
\qquad && \text{(cosimplicial homotopy limit)} \\
E_2^{s,t} &= {\lim_P}^s \, \pi_t F(X) \Longrightarrow_s
        \pi_{t-s} \holim_P F(X)
\qquad && \text{(cubical homotopy limit)}
\end{alignat*}
are isomorphic at the $E_1$-term, and all five are isomorphic at the
$E_2$-term, hence also at all later terms.  If $RE_\infty = 0$ these
spectral sequences all converge strongly to the indicated abutments.

(b) 
Let $F(Y)$ be any coaugmented cosimplicial spectrum, and let $F(X)
= F(Y) \circ f_\eta$.  If $F$ satisfies cubical descent over $X$, or
equivalently, if $F$ satisfies cosimplicial descent over $Y$, then each
abutment in~(a) is isomorphic to $\pi_* F(Y^{-1}) = \pi_*
F(X(\varnothing))$.  If $F(Y)$ is the Amitsur resolution $\eta \:
F(A) \to F(Y^\bullet_R(A,B))$, so that $F(X)$ is the Amitsur cube
$F(X^\omega_R(A,B))$, then this common abutment is $\pi_* F(A)$.
\end{proposition}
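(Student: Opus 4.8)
The plan is to prove the proposition by assembling the chain of comparisons already produced in the discussion preceding its statement; each individual equivalence of towers or of spectral sequences has been established there, so what remains is to record how they interlock and then to read off the common abutment. Throughout I would write $F(X|P) = F(Y^\bullet) \circ f$ and treat parts~(a) and~(b) in turn.

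For part~(a) I would first collect the $E_1$-level identifications among the first three spectral sequences. By Proposition~\ref{Delta-vs-P} the left cofinal functor $f$ makes the tower~\eqref{Delta-tower} equivalent to the tower~\eqref{P-tower}, which gives an isomorphism between the cubical descent and the cosimplicial descent spectral sequences starting at $E_1$. Next, Proposition~\ref{tot-n-as-holim} identifies the $\Tot$-tower~\eqref{Tot-tower} with the tower~\eqref{Delta-tower}, so the cosimplicial descent spectral sequence agrees with the Bousfield--Kan spectral sequence of $F(Y^\bullet)$ from $E_1$ onward. Since an isomorphism of spectral sequences at $E_1$ propagates to every later page, these three agree at $E_2$ and beyond.

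Then I would splice in the two homotopy limit spectral sequences at the $E_2$-level. The natural map of~\cite{BK72}*{X.7.5} from the Bousfield--Kan spectral sequence to the cosimplicial homotopy limit spectral sequence is an $E_2$-isomorphism, and the map $f^*$ induced by $f \: P \to \Delta$ identifies the cosimplicial and cubical homotopy limit spectral sequences at $E_2$, its $E_2$-map being the isomorphism of Corollary~\ref{f-fn-lims-iso}. Combined with the previous paragraph, this shows all five spectral sequences are isomorphic from $E_2$ onward. For strong convergence I would note that the cubical descent spectral sequence is conditionally convergent in the sense of~\cite{Boa99}*{Def.~5.10}, so that $RE_\infty = 0$ yields strong convergence by~\cite{Boa99}*{Thm.~7.4}; the same conclusion transports to the other four through the established isomorphisms. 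For part~(b) I would identify the abutments: when $F$ satisfies cubical descent over $X$ the map $\eta$ is an equivalence, so $\pi_{t-s}\holim_P F(X) \cong \pi_{t-s} F(X(\varnothing))$, and by Corollary~\ref{cosimp-vs-cube} this is equivalent to cosimplicial descent over $Y$, giving $\pi_{t-s}\holim_\Delta F(Y^\bullet) \cong \pi_{t-s} F(Y^{-1})$. Since $f_\eta(\varnothing) = [-1]$ we have $X(\varnothing) = Y^{-1}$, so every abutment in~(a) is $\pi_* F(Y^{-1}) = \pi_* F(X(\varnothing))$; in the Amitsur case $Y^{-1} = A$, so it is $\pi_* F(A)$.

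The main obstacle to watch is the coherence of the two bridges: the first three spectral sequences are matched at $E_1$, whereas the link to the last two is only an $E_2$-isomorphism. I would need to verify that the Bousfield--Kan spectral sequence, which lies in both comparison groups, provides a consistent join, so that all five genuinely coincide from $E_2$ onward rather than merely being pairwise comparable. Because every comparison map in sight is natural and the Bousfield--Kan spectral sequence is the shared term, this coherence is automatic, but it is the one point where I would be careful to check that the induced $E_2$-isomorphisms are compatible.
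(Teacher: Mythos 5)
Your proposal is correct and follows essentially the same route as the paper: the proof consists in assembling the tower equivalences and $E_2$-isomorphisms already established in the discussion preceding the statement, invoking conditional convergence of towers of fibrations together with $RE_\infty=0$ for strong convergence, and reading off the abutment from the descent equivalence $F(Y^{-1})\simeq\holim_\Delta F(Y^\bullet)$.
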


\begin{proof}
(a)
The stated isomorphisms were all discussed before the statement of
the proposition.  Each spectral sequence is derived from a tower of
fibrations, hence is conditionally convergent to the sequential limit
of the homotopy groups of the terms in this tower.  When $RE_\infty$
vanishes, each homotopy group of the homotopy limit of the tower
is isomorphic to that sequential limit, and the spectral sequence is
strongly convergent, by \cite{BK72}*{IX.5.4} or \cite{Boa99}*{Thm.~7.4}.

(b)
Cosimplicial descent for $F$ over~$Y$ ensures that $F(Y^{-1}) \simeq
\holim_\Delta F(Y^\bullet)$.
\end{proof}

\begin{theorem} \label{descentspseq}
Let $R$ be a connective commutative $\bS$-algebra, $A$ a connective
$R$-algebra, $B$ a connective commutative $R$-algebra and suppose that
the unit map $\eta \: R \to B$ is $1$-connected.
Let $F$ be one of the functors
\begin{itemize}
\item $K$, $\TC$ (with $c=+1$),
\item $\THH$, $\THH(-)_{hC}$, $\THH(-)^C$, $\TR(-;p)$ (with $c=0$),
\item $\TF(-;p)$, $\TC(-;p)$ (with $c=-1$).
\end{itemize}
Then the $E_1$-term of the (cubical/cosimplicial) descent spectral sequence
for $F$ at $A$ along $\eta \: R \to B$ vanishes in all bidegrees $(s,t)$
with $t-s < s+c$ and $s\ge1$.  It is strongly convergent to $\pi_* F(A)$,
with $E_2$-term given by
$$
E_2^{s,t} = \pi^s \, \pi_t F(Y^\bullet_R(A,B))
	\Longrightarrow_s \pi_{t-s} F(A) \,.
$$
\end{theorem}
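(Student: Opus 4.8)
The plan is to assemble the four assertions---the vanishing line, the $E_2$-term, the abutment and strong convergence---from results already established, since each of the relevant spectral sequences has been analyzed individually above. I would fix $F$ among the listed functors. Because $B$ is commutative, both the Amitsur cube $X = X^\omega_R(A,B)$ and the Amitsur resolution $Y = Y_R(A,B)$ are defined, and $X = Y \circ f_\eta$ by Lemma~\ref{X-is-Yf}. By Theorems~\ref{K-Cartesian}, \ref{THH-TC-Cartesian} and~\ref{K-THH-TC-descent} the functor $F$ satisfies cubical descent over~$X$, equivalently, by Corollary~\ref{cosimp-vs-cube}, cosimplicial descent over~$Y$. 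This is what lets me speak of a single ``descent spectral sequence'': by the preceding Proposition the cubical descent, cosimplicial descent and Bousfield--Kan spectral sequences are isomorphic already at the $E_1$-term, and all five spectral sequences considered there agree from the $E_2$-term onward.

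With that identification in place I would read off the $E_2$-term and the abutment. The Bousfield--Kan identification \cite{BK72}*{X.7.2} gives $E_2^{s,t} = \pi^s \pi_t F(Y^\bullet)$ for the spectral sequence of the cosimplicial spectrum $F(Y^\bullet) = F(Y^\bullet_R(A,B))$, and transporting along the $E_2$-isomorphisms of the preceding Proposition equips the descent spectral sequence with exactly this $E_2$-term. For the abutment I would invoke part~(b) of the same Proposition: since $F$ satisfies descent, the common abutment is $\pi_* F(Y^{-1}) = \pi_* F(A)$.

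The vanishing line is then imported directly from Corollary~\ref{vanishing-line}. With the constant $c$ assigned as in the statement, one has $E_1^{s,t} = 0$ whenever $t-s < s+c$ and $s\ge1$. For $K$ and for the functors with $c\in\{0,-1\}$ this rests on the connectivity of $\eta_n$ supplied by Theorems~\ref{K-Cartesian} and~\ref{THH-TC-Cartesian}, whereas for integral $\TC$ it rests instead on the comparison with $K$-theory through the cyclotomic trace, using that the square of \cite{DGM13}*{Thm.~7.0.0.2} is homotopy Cartesian, so that the $E_1$-terms for $K$ and $\TC$ coincide for $s\ge1$. Since the cubical and cosimplicial descent spectral sequences already agree at $E_1$, the vanishing holds in both descriptions.

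Finally, for strong convergence I would observe that a vanishing line of slope~$+1$ beginning at the $E_1$-page leaves, in each fixed bidegree, only finitely many potentially nonzero differentials: an outgoing $d_r$ eventually lands in the vanishing region, and only finitely many incoming $d_r$ have their source in nonnegative filtration. Hence the spectral sequence collapses at a finite stage there and $RE_\infty = \lim_r^1 E_r = 0$. As each of these spectral sequences arises from a tower of fibrations it is conditionally convergent, so by \cite{Boa99}*{Thm.~7.4}, equivalently part~(a) of the preceding Proposition, the vanishing of $RE_\infty$ upgrades this to strong convergence, with abutment $\pi_* F(A)$ by the previous paragraph. I expect the only genuine obstacle to lie in the integral $\TC$ case of the vanishing line: no uniform connectivity estimate for $\eta_n$ is available directly there, since integral $\TC$ mixes $p$-completions over all primes with homotopy fixed points, so the $E_1$-vanishing must be borrowed from $K$-theory through the homotopy-Cartesian trace square, and the point to check is that the $E_2$-isomorphisms of the preceding Proposition are compatible with this trace-map identification, so that the description $E_2^{s,t} = \pi^s \pi_t F(Y^\bullet)$ and strong convergence apply uniformly to all the listed functors.
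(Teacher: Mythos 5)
Your proposal is correct and follows essentially the same route as the paper: the vanishing line is imported from Corollary~\ref{vanishing-line}, the identification of the $E_1$- and $E_2$-terms comes from the preceding Proposition comparing the cubical, cosimplicial and Bousfield--Kan spectral sequences, and strong convergence follows because the slope-one vanishing line forces $E_r^{s,t} = E_\infty^{s,t}$ for finite $r$ in each bidegree, hence $RE_\infty = 0$. The compatibility worry you raise at the end for integral $\TC$ is not needed, since the vanishing is already established at the $E_1$-term of the cubical descent spectral sequence and the Proposition's isomorphisms hold at $E_1$ for the three descent-type spectral sequences.
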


\begin{proof}
The description of the $E_2$-term is that of the Bousfield--Kan spectral
sequence.  The vanishing line for the $E_1$-term of the cubical descent
spectral sequence is that of Corollary~\ref{vanishing-line}.  It follows
that in each bidegree~$(s,t)$ there is a finite $r$ such that $E_r^{s,t}
= E_\infty^{s,t}$, which implies that $RE_\infty^{s,t} = 0$.  Hence 
each version of the spectral sequence converges strongly to
the homotopy groups of $F(Y^{-1}_R(A,B)) = F(A)$.
\end{proof}

In the discussion above we used the following result, for which a proof
does not seem to have appeared in the literature.

\begin{proposition} \label{tot-n-as-holim}
Let $Z^\bullet$ be any fibrant cosimplicial space.
There are compatible natural equivalences
$$
z^* \: \Tot_n Z^\bullet \overset{\simeq}\longto
	\holim_{\Delta^{<n+1}} Z^\bullet
$$
for all $0 \le n \le \infty$.
\end{proposition}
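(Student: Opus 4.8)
The plan is to compare two models of the finite totalization. Recall that $\Tot_n Z^\bullet$ is defined as the space of maps of cosimplicial spaces from the $n$-skeleton $\sk_n \Delta^\bullet$ of the standard cosimplicial simplex into $Z^\bullet$, while $\holim_{\Delta^{<n+1}} Z^\bullet$ is the Bousfield--Kan homotopy limit over the truncated indexing category. First I would recall the cofibrant-cosimplicial-space description of the homotopy limit: by \cite{BK72}*{XI.3.2 and XI.4.4}, for any small category $\sD$ and any $\sD$-diagram $W$ of fibrant spaces one has $\holim_\sD W \cong \operatorname{Map}_\sD(B(\sD/{-}), W)$, the space of maps of $\sD$-diagrams from the nerve of the over-categories into $W$. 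Applied to $\sD = \Delta^{<n+1}$ and the restricted diagram $Z^\bullet|\Delta^{<n+1}$, this presents $\holim_{\Delta^{<n+1}} Z^\bullet$ as a mapping space out of the functor $[q] \mapsto B(\Delta^{<n+1}/[q])$.

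The heart of the argument is then a cosimplicial comparison between the two representing objects. On the $\Tot_n$ side the representing cosimplicial space is $\sk_n \Delta^\bullet$, and on the $\holim$ side it is $[q] \mapsto B(\Delta^{<n+1}/[q])$. I would construct a natural map $z^*$ by producing a map of cosimplicial spaces between these two corepresenting objects (in the appropriate direction so that precomposition yields $\Tot_n Z^\bullet \to \holim_{\Delta^{<n+1}} Z^\bullet$), compatibly in $n$. The key geometric input is the classical identification of $B(\Delta/[q])$ with the subdivision of $\Delta^q$ and, more relevantly, the well-known fact that the barycentric-subdivision comparison restricts along the truncation: the over-category $\Delta^{<n+1}/[q]$ receives a last-vertex map whose nerve is weakly equivalent to the appropriate skeletal piece of $\Delta^q$. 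Concretely, a simplex of $B(\Delta^{<n+1}/[q])$ is a chain of arrows $[p_0] \to \dots \to [p_k] \to [q]$ with each $p_i \le n$, and sending it to its final composite $[p_k] \to [q]$ together with the data of the chain gives the coherence I need.

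The main obstacle I expect is verifying that $z^*$ is an equivalence, and this is where the fibrancy of $Z^\bullet$ and an induction on $n$ enter. I would argue by induction using the pullback squares that define the successive stages: on the $\Tot$ side the square relating $\Tot_n$ to $\Tot_{n-1}$ via the matching object $M_n Z$ and the $n$-th latching data of $\sk_n \Delta^\bullet / \sk_{n-1}\Delta^\bullet \cong \partial \Delta^n \hookrightarrow \Delta^n$ is \cite{BK72}*{X.6.1}; on the $\holim$ side the analogous homotopy-pullback square expresses $\holim_{\Delta^{<n+1}}$ in terms of $\holim_{\Delta^{<n}}$ and the fiber over the single new object $[n]$, namely the space of maps out of the relative nerve $B(\Delta^{<n+1}/[n], \Delta^{<n}/[n])$. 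The inductive step reduces to checking that $z^*$ induces an equivalence on these new fibers, i.e.\ that the relevant relative nerve is identified with the pair $(\Delta^n, \partial \Delta^n)$ up to the homotopy needed; the base case $n=0$ is the identification $\Tot_0 Z^\bullet \simeq Z^0 \simeq \holim_{\Delta^{<1}} Z^\bullet$. Passing to the colimit over $n$ handles $n = \infty$, using that both $\Tot$ and $\holim_\Delta$ are the respective homotopy limits of their truncations and that $z^*$ is compatible with the tower maps $\tau_s$ and $\delta_s$.
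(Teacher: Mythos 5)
Your construction of the comparison map is essentially the one the paper uses: you identify $\Tot_n$ as maps out of $\sk_n\Delta$, identify $\holim_{\Delta^{<n+1}}$ as maps of truncated cosimplicial spaces out of $[q]\mapsto N(\Delta^{<n+1}/[q])$, and compare the two corepresenting objects by a vertex map on nerves of over-categories (the paper uses the zeroth-vertex map of \cite{BK72}*{XI.2.6(ii)} and the adjunction $\hom_{c\sS}(i_*i^*\Delta,Z^\bullet)\cong\hom_{\sS^D}(i^*\Delta,i^*Z^\bullet)$ with $i_*i^*\Delta\cong\sk_n\Delta$). Where you genuinely diverge is in proving that this map is an equivalence. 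The paper does it in one step: it puts the Reedy model structure on $\sS^{D}$ for $D=\Delta^{<n+1}$, checks that $i^*\Delta$ and $N(D/-)$ are Reedy cofibrant, that $i^*Z^\bullet$ is Reedy fibrant, and that $z$ is an objectwise equivalence, so mapping into $i^*Z^\bullet$ sends $z$ to an equivalence. You instead propose an induction on $n$ comparing the fibers of $\Tot_n\to\Tot_{n-1}$ with those of $\holim_{\Delta^{<n+1}}\to\holim_{\Delta^{<n}}$. That route can be made to work, and it has the virtue of being more explicit, but it is longer and ultimately smuggles in the same Reedy-type cofibrancy verifications cell by cell.

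One step in your induction is asserted too casually and is in fact the crux. The fiber of the restriction $\holim_{\Delta^{<n+1}}Z^\bullet\to\holim_{\Delta^{<n}}Z^\bullet$ is \emph{not} simply the space of maps out of a relative nerve concentrated at the single new object $[n]$: enlarging $\Delta^{<n}$ to $\Delta^{<n+1}$ changes the over-category $\Delta^{<n+1}/[k]$ at \emph{every} $[k]$ with $k<n$ as well, since the new object $[n]$ admits codegeneracies $[n]\to[k]$. So the quotient of $N(\Delta^{<n+1}/-)$ by (the left Kan extension of) $N(\Delta^{<n}/-)$ is supported at all objects, and identifying the mapping space out of it with $\Omega^n$ of the fiber of $Z^n\to M^nZ^\bullet$ requires a latching-object argument showing that this quotient diagram is freely generated by a cell at $[n]$ whose attaching data matches $\partial\Delta[n]\hookrightarrow\Delta[n]$. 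That is exactly the Reedy cofibrancy of $N(D/-)$ and $i^*\Delta$ that the paper verifies once and for all; if you carry out your induction you should make this latching analysis explicit rather than treating the new fiber as living only over $[n]$. Your base case and the passage to $n=\infty$ by taking limits of the towers are fine.
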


\begin{proof}
When $n=\infty$, this result is \cite{BK72}*{XI.4.4}.  We indicate how
to adapt their proof to the case of finite~$n$.

Let $D = \Delta^{<n+1}$, let $i \: D \to \Delta$ be the inclusion of the
full subcategory, let $\sS$ be the category of spaces (= simplicial sets),
and let $\sS^\Delta = c\sS$ and $\sS^D$ be the functor categories of
cosimplicial spaces and $n$-truncated cosimplicial spaces, respectively.
Composition with $i$ defines the restriction functor $i^* \: c\sS \to
\sS^D$.  Let $i_* = \LKan_i \: \sS^D \to c\sS$ be the left Kan extension,
left adjoint to $i^*$.  For each $[k] \in D$, with $0 \le k \le n$,
let $D/[k]$ be the over category and let $N(D/[k])$ denote its nerve
(also known as its underlying or classifying space).  For $[k] \in D$, let
$z \: N(D/[k]) \to \Delta[k]$ be the ``zeroth vertex map'' sending each
vertex $\alpha \: [p] \to [k]$ in $N(D/[k])$ to the vertex $\alpha(0)$ in
$\Delta[k]$, see~\cite{BK72}*{XI.2.6(ii)}.  It is a weak equivalence for
each $[k]$ in $D$, since both $N(D/[k])$ and $\Delta[k]$ are contractible.
Composition with $z$ defines a morphism of mapping spaces
$$
\hom_{c\sS}(i_* i^* \Delta, Z^\bullet)
	\cong \hom_{\sS^D}(i^* \Delta, i^* Z^\bullet)
	\overset{z^*}\longto \hom_{\sS^D}(N(D/-), i^* Z^\bullet)
	= \holim_D i^* Z^\bullet \,.
$$
Here
$(i_* i^* \Delta)[q] = \colim_{[k] \to [q]} \Delta[k] \cong \sk_n \Delta[q]$
for $[q] \in \Delta$, where $[k] \to [q]$ ranges over the left fiber
category $i/[q]$ of $i$ at $[q]$.  Hence we can rewrite $z^*$ as
$$
z^* \: \Tot_n Z^\bullet = \hom_{c\sS}(\sk_n \Delta, Z^\bullet)
  \longto \holim_D i^* Z^\bullet \,.
$$
To prove that $z^*$ is a weak equivalence, we will use that $\sS^D$
with the Reedy model structure is a simplicial model category
\cite{Hir03}*{Thm.~15.3.4}.  Here $D$ has the evident Reedy category
structure inherited from~$\Delta$.  It suffices to prove that $z \:
N(D/-) \to i^* \Delta$ is a weak equivalence of cofibrant objects and
that $i^* Z^\bullet$ is a fibrant object, in the Reedy model structure on
$\sS^D$.  We have already observed that $z$ is a Reedy weak equivalence.
The cosimplicial space $\Delta$ is unaugmentable, hence cofibrant in the
model structure on $c\sS$, see~\cite{BK72}*{X.4.2}.  The latching map
of $i^* \Delta$ at each object $[k]$ in $D$ is equal to the latching
map of $\Delta$ at $[k] \in \Delta$, and therefore $i^* \Delta$ is
Reedy cofibrant.  By assumption, $Z^\bullet$ is fibrant in the model
structure on $c\sS$, see~\cite{BK72}*{X.4.6}.  The matching map of $i^*
Z^\bullet$ at each object $[k]$ in $D$ is equal to the matching map of
$Z^\bullet$ at $[k] \in \Delta$, hence $i^* Z^\bullet$ is Reedy fibrant.
It remains to check that $N(D/-)$ is Reedy cofibrant.  This follows
immediately from the fact that it is cofibrant in the projective model
structure on $\sS^D$, see~\cite{Hir03}*{Prop.~14.8.9, Thm.~11.6.1}.
\end{proof}

\subsection{Tensored structure and topological Andr{\'e}--Quillen homology}

In this subsection we specialize to the case when both $A$ and
$B$ are commutative $R$-algebras.  This category is tensored over
spaces, taking any simplicial set $S \: [p] \mapsto S_p$ and any
commutative $R$-algebra~$A$ to the realization $S \otimes_R A$ of
$[p] \mapsto S_p \otimes_R A = A \wedge_R \dots \wedge_R A$, with one
copy of~$A$ for each element of~$S_p$.  In the case when $S = S^1 =
\Delta[1]/\partial\Delta[1]$ is the simplicial circle and $R = \bS$ is
the sphere spectrum, $S^1 \otimes_{\bS} A = B^{cy}(A)$ is the cyclic bar
construction considered in the proof of Theorem~\ref{THH-TC-Cartesian},
which is equivalent to $\THH(A)$ when $A$ is flat.  Suppose now that $A$
and $B$ are $R$-com-alg-cofibrant, hence flat as $R$-modules.  Our results
about descent for $\THH$ generalize as follows.

\begin{proposition} \label{prop:descentfortensor}
If the unit map $R \to B$ is $1$-connected, then tensoring with
any simplicial set $S$ satisfies cosimplicial descent at $A$ along $R
\to B$, meaning that
$$
\eta \: S \otimes_R A \overset{\simeq}\longto
	\holim_\Delta \, S \otimes_R Y^\bullet_R(A,B)
$$
is an equivalence.
\end{proposition}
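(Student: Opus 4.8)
The plan is to imitate the proof of Theorem~\ref{THH-TC-Cartesian}(a), absorbing the functor $S \otimes_R (-)$ into a change of the algebras $A$ and $B$ while keeping the base $R$ fixed. Write $A' = S \otimes_R A$ and $B' = S \otimes_R B$. Tensoring with a fixed simplicial set is a left adjoint, so $S \otimes_R (-)$ preserves colimits of commutative $R$-algebras; since the relative smash product $\wedge_R$ is the coproduct in this category and $R$ is its initial object, so that $S \otimes_R R \cong R$, this yields natural isomorphisms
$$
S \otimes_R (A \wedge_R B \wedge_R \dots \wedge_R B)
	\cong A' \wedge_R B' \wedge_R \dots \wedge_R B' \,.
$$
These are compatible with the coface and codegeneracy operators induced by $\eta$ and~$\mu$, and hence assemble to a natural isomorphism $S \otimes_R Y^\bullet_R(A,B) \cong Y^\bullet_R(A',B')$ of coaugmented cosimplicial commutative $R$-algebras. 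By Lemma~\ref{X-is-Yf} this corresponds to an isomorphism $S \otimes_R X^\omega_R(A,B) \cong X^\omega_R(A',B')$ of $\omega$-cubes.

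The next step is to verify the hypotheses of Lemma~\ref{X-is-id-cartesian} for the data $(R, A', B')$. Since $S \otimes_R (-)$ is a left Quillen functor on commutative $R$-algebras, $A'$ and $B'$ are again cofibrant, hence flat connective $R$-modules, just as for $B^{cy}(A)$ and $B^{cy}(B)$ in Theorem~\ref{THH-TC-Cartesian}. The essential point is that the unit map $\eta' = S \otimes_R \eta \: R \to B'$ is again $1$-connected. In each simplicial degree~$p$ the map $S_p \otimes_R \eta$ is the smash power $\eta^{\wedge_R |S_p|} \: R \to B^{\wedge_R |S_p|}$, which factors as a composite of maps of the form $id \wedge_R \dots \wedge_R \eta \wedge_R \dots \wedge_R id$. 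Each such factor is $1$-connected, because $\eta$ is $1$-connected and smashing a $1$-connected map with a flat connective $R$-module preserves $1$-connectivity, and composites of $1$-connected maps are $1$-connected. Passing to the realization, which preserves connectivity of maps, shows that $\eta'$ is $1$-connected.

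With these hypotheses in place, Lemma~\ref{X-is-id-cartesian} (taking $d=n$) shows that each $n$-cube $X^n_R(A',B')$ is $n$-Cartesian, so the natural map $\eta_n \: A' \to \holim_{P^n} X^n_R(A',B')$ is $n$-connected. Letting $n \to \infty$ and using $\holim_P \simeq \holim_n \holim_{P^n}$, the map $\eta \: A' \to \holim_P X^\omega_R(A',B')$ is an equivalence; that is, cubical descent holds over $X^\omega_R(A',B')$. By Corollary~\ref{cosimp-vs-cube} this is equivalent to cosimplicial descent over $Y_R(A',B')$, i.e.\ to the assertion that $A' \overset{\simeq}\longto \holim_\Delta Y^\bullet_R(A',B')$. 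Transporting along the isomorphism of the first paragraph yields the desired equivalence
$$
\eta \: S \otimes_R A \overset{\simeq}\longto \holim_\Delta S \otimes_R Y^\bullet_R(A,B) \,.
$$

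I expect the main obstacle to be the connectivity estimate for $\eta'$: one must ensure that tensoring with an arbitrary, possibly infinite and disconnected, simplicial set~$S$ does not destroy the $1$-connectivity of~$\eta$. Reducing to the levelwise smash powers $\eta^{\wedge_R |S_p|}$ and invoking the preservation of connectivity under realization is what makes this work. The remaining flatness and cofibrancy bookkeeping for $A'$ and $B'$ is routine, being formally identical to the $\THH$ case, which is the special case $R = \bS$, $S = S^1$ already handled in Theorem~\ref{THH-TC-Cartesian}.
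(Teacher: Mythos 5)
Your proof is correct and follows essentially the same route as the paper: both identify $S \otimes_R Y^\bullet_R(A,B)$ with $Y^\bullet_R(S \otimes_R A, S \otimes_R B)$ via commutation of tensors with coproducts and then invoke Lemma~\ref{X-is-id-cartesian} together with the fact that realization preserves connectivity. The only cosmetic difference is order of operations: the paper applies the lemma levelwise to the cubes $X^n_R(S_p \otimes_R A, S_p \otimes_R B)$ and then realizes, whereas you first realize to check that $R \to S \otimes_R B$ is still $1$-connected and apply the lemma once to the realized data.
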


\begin{proof}
Tensors commute with coproducts, so $S \otimes_R Y^\bullet_R(A,B)$
is isomorphic to $Y^\bullet_R(S \otimes_R A, S \otimes_R B)$.
Lemma~\ref{X-is-id-cartesian} shows that $X^n_R(S_p \otimes_R A, S_p
\otimes_R B)$ is $(2n-1)$-co-Cartesian for each $p\ge0$, which implies
that $X^n_R(S \otimes_R A, S \otimes_R B)$ is $(2n-1)$-co-Cartesian and
$n$-Cartesian, for each $n\ge1$.  Hence
$$
\eta_n \: S \otimes_R A \longto
	\holim_{\Delta^{<n}} Y^\bullet_R(S \otimes_R A, S \otimes_R B)
	\simeq \holim_{P^n} X^n_R(S \otimes_R A, S \otimes_R B)
$$
is $n$-connected, and therefore $\eta$ is an equivalence.
\end{proof}

Using either the B{\"o}kstedt type model as in \cite{BCD10} (extended
mutatis mutandis to connective orthogonal or symmetric spectra) or a
model with more categorical control as in \cite{BDS}, the fundamental
homotopy cofiber sequence~\eqref{norm-restriction} leads to the
equivariant extension given below.
This generalizes our descent results for $\TC(-;p)$ to the various
forms of covering homology considered in \cite{BCD10}*{Sec.~7}.
In particular, the higher topological cyclic homology of \cite{CDD11}
satisfies cosimplicial descent at connective commutative $\bS$-algebras
along $1$-connected unit maps.

\begin{corollary}
If $G$ is a finite group acting freely on $S$, and $R \to B$ is
$1$-connected, then tensoring with $S$ satisfies equivariant cosimplicial
descent at $A$ along $R \to B$, meaning that
$$
\eta \: S \otimes_R A \overset{\simeq}\longto
	\holim_\Delta \, S \otimes_R Y^\bullet_R(A,B)
$$
is a $G$-equivariant equivalence.
\end{corollary}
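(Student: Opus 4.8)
The plan is to upgrade the non-equivariant statement of Proposition~\ref{prop:descentfortensor} to a $G$-equivariant one by carrying the $G$-action through that proof. First I would use that the whole construction is natural in the $G$-simplicial set~$S$: the coaugmented cosimplicial $R$-algebra $S \otimes_R Y^\bullet_R(A,B)$, the isomorphism $S \otimes_R Y^\bullet_R(A,B) \cong Y^\bullet_R(S \otimes_R A, S \otimes_R B)$ used there, and the coaugmentation~$\eta$, are all natural in~$S$. Hence $G$ acts compatibly on the source and target of~$\eta$, making $\eta$ a map of $G$-spectra. Since the genuine fixed-point functors $(-)^H$ are right adjoints, they commute with $\holim_\Delta$, so a $G$-equivariant equivalence can be detected on $H$-fixed points for all subgroups $H \le G$; the case $H = e$ is exactly Proposition~\ref{prop:descentfortensor}.

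For the remaining subgroups I would rerun the connectivity estimate equivariantly. In Proposition~\ref{prop:descentfortensor} the map $\eta_n \: S \otimes_R A \to \holim_{P^n} X^n_R(S \otimes_R A, S \otimes_R B)$ is shown to be $n$-connected because the $n$-cube is $(2n-1)$-co-Cartesian, and this is deduced levelwise in the simplicial degree~$p$ from Lemma~\ref{X-is-id-cartesian} applied to the finite-set tensors $X^n_R(S_p \otimes_R A, S_p \otimes_R B)$. It therefore suffices to show that each of these levelwise $n$-cubes is \emph{equivariantly} $(2n-1)$-co-Cartesian, i.e. that its iterated homotopy cofiber is $(2n-1)$-connected on every set of $H$-fixed points; realizing over~$p$ and passing to $\holim_n$ then yields that $\eta_n$, and hence $\eta$, is a $G$-equivariant equivalence.

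The hard part will be this equivariant connectivity, and it is exactly where freeness of the action is essential. By Lemma~\ref{X-is-id-cartesian} the iterated cofiber of $X^n_R(S_p \otimes_R A, S_p \otimes_R B)$ is the $R$-smash of $S_p \otimes_R A$ with $n$ copies of the $1$-connected cofiber $(S_p \otimes_R B)/R$, with $G$ permuting the factors indexed by the free $G$-set~$S_p$. Over a single orbit this is the norm $\bigwedge_{g \in G}(-)$, whose genuine $H$-fixed points carry Tate-type contributions and are \emph{not} those of a free $G$-spectrum, so the equivariant type cannot be read off from the underlying one by a formal free-cell argument. What rescues the estimate is that the geometric fixed-point functors $\Phi^H$ are symmetric monoidal and exact, and satisfy the diagonal formula $\Phi^H(\bigwedge_{S_p} B) \simeq \bigwedge_{S_p/H} B$ \emph{precisely because} $H$ acts freely on~$S_p$; hence $\Phi^H$ of the iterated cofiber is again an $R$-smash with $n$ copies of a $1$-connected cofiber $\bigl((S_p/H) \otimes_R B\bigr)/R$, so it is again $(2n-1)$-connected. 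The free action thus preserves the \emph{number} of $1$-connected factors and produces no Tate terms, and by isotropy separation the genuine $H$-fixed points inherit the same $(2n-1)$-connectivity. The only remaining obstacle is model-theoretic: realizing these formal properties of $\Phi^H$---monoidality, exactness, and the free diagonal formula---at the point-set level. This is exactly the input supplied by the B\"okstedt-type model of~\cite{BCD10} or the categorically controlled model of~\cite{BDS}, in which the equivariant refinement of the norm--restriction cofiber sequence~\eqref{norm-restriction} is available; granting it, the bound $(2n-1)$ survives on all fixed points and the argument goes through.
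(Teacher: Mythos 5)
Your proposal is correct and takes essentially the same route as the paper's (much terser) proof: the paper likewise reduces to showing that $\eta^H$ is an equivalence for every subgroup $H \subseteq G$, and establishes the connectivity estimate on fixed points by induction over closed families of subgroups (i.e.\ isotropy separation), using that homotopy orbits preserve connectivity and that the freeness of the $G$-action on $S$ supplies the diagonal identification of geometric fixed points via the point-set models of \cite{BCD10} or \cite{BDS}. The only difference is expository: you make explicit the role of $\Phi^H$, the free-orbit diagonal formula, and where Tate-type terms would otherwise obstruct the argument, all of which the paper compresses into its citation of \cite{BCD10}*{Lem.~5.1.3}.
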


\begin{proof}
We have to show that for every subgroup $H \subseteq G$, the map $\eta^H$
of $H$-fixed points is an equivalence.  This follows by essentially
the same argument as for $\THH$, e.g.~by \cite{BCD10}*{Lem.~5.1.3},
using induction over the closed families of subgroups and the fact that
homotopy orbits preserve connectivity.
\end{proof}

Coming back to the non-equivariant situation, cosimplicial descent is
satisfied by topological Andr{\'e}--Quillen homology, which we will
denote by $\TAQ^R(A)$.
% \cite{HH13}, \cite{CH15}.

\begin{proposition}
If the unit map $R \to B$ is $1$-connected, then topological
Andr\'e--Quillen homology satisfies cosimplicial descent at $A$ along
$R \to B$, in the sense that
$$
\eta \: \TAQ^R(A) \overset{\simeq}\longto
	\holim_\Delta \, \TAQ^R(Y^\bullet_R(A,B))
$$
is an equivalence.
\end{proposition}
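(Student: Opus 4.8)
The plan is to reduce the statement to cubical descent and then imitate the proof of Theorem~\ref{THH-TC-Cartesian}(a), replacing the cyclic bar construction by the two structural properties of topological André--Quillen homology: flat base change and the transitivity homotopy cofiber sequence. By Corollary~\ref{cosimp-vs-cube} and Lemma~\ref{X-is-Yf} it suffices to prove that $\TAQ^R(-)$ satisfies cubical descent at $A$ along $R \to B$, i.e.\ that the connectivity of
$$
\eta_n \colon \TAQ^R(A) \longto \holim_{P^n} \TAQ^R(X^n)
$$
grows to infinity with $n$, where $X^n = X^n_R(A,B)$. As in Theorem~\ref{THH-TC-Cartesian}, I would deduce this from a co-Cartesian estimate: each $d$-dimensional subcube of $\TAQ^R(X^n)$ is $(2d-1)$-co-Cartesian, hence $d$-Cartesian, so that $\eta_n$ is $n$-connected.

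The key structural input is the sum (Leibniz) rule for $\TAQ$: for $R$-com-alg-cofibrant commutative $R$-algebras $C$ and $D$ there is a natural equivalence of $R$-modules
$$
\TAQ^R(C \wedge_R D) \simeq \bigl(\TAQ^R(C) \wedge_R D\bigr) \vee \bigl(C \wedge_R \TAQ^R(D)\bigr),
$$
obtained from flat base change together with the transitivity cofiber sequence for $R \to D \to C \wedge_R D$. Iterating this over the smash factors of $X^n(T) = A \wedge_R X_{1,T} \wedge_R \dots \wedge_R X_{n,T}$, and using $\TAQ^R(R) \simeq *$ to discard the $R$-slots, I would obtain a natural splitting of the $n$-cube $\TAQ^R(X^n)$ as a finite wedge of cubes: one $A$-layer, naturally isomorphic to the $R$-smash product $\TAQ^R(A) \wedge_R X^n_R(R,B)$, together with one layer for each slot $j \in \{1,\dots,n\}$, the latter being the $R$-smash product of the $1$-cube $\TAQ^R(\eta) \colon \TAQ^R(R) = * \to \TAQ^R(B)$ in direction $j$ with the module-level Amitsur cube $X^{n-1}_R(A,B)$ in the remaining $n-1$ directions.

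Each layer is then controlled by Lemma~\ref{X-is-id-cartesian}, using that the iterated homotopy cofiber of an $R$-smash product of cubes is the smash product of their iterated homotopy cofibers. For the $A$-layer, a $d$-dimensional subcube has iterated cofiber $\TAQ^R(A) \wedge_R W$, where $W$ is the iterated cofiber of a $d$-dimensional subcube of $X^n_R(R,B)$ and hence $(2d-1)$-connected by Lemma~\ref{X-is-id-cartesian}; since $\TAQ^R(A)$ is connective, this smash product remains $(2d-1)$-connected. For the $j$-th layer I would use that $\TAQ^R(B)$ is $1$-connected --- the Hurewicz theorem for $\TAQ$ applied to the $1$-connected map $\eta$ gives $\pi_* \TAQ^R(B) \cong \pi_*(B/R)$ in the bottom range, and $B/R$ is $1$-connected. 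A $d$-dimensional subcube meeting direction $j$ then has iterated cofiber $\TAQ^R(B) \wedge_R W'$, where $W'$ is the iterated cofiber of a $(d-1)$-dimensional subcube of $X^{n-1}_R(A,B)$ and hence $(2d-3)$-connected; smashing with the $1$-connected spectrum $\TAQ^R(B)$ yields connectivity $(2d-1)$. A subcube avoiding direction $j$ is either null (at the $R$-end, since $\TAQ^R(R) \simeq *$) or still more highly connected. As iterated homotopy cofibers send finite wedges of cubes to wedges, every $d$-dimensional subcube of $\TAQ^R(X^n)$ is $(2d-1)$-co-Cartesian, so $\eta_n$ is $n$-connected and descent follows.

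The hard part will be establishing the sum rule with genuine naturality across the cube: one must check that the flat base change equivalences and the transitivity cofiber sequences assemble compatibly with the structure maps of $X^\omega_R(A,B)$ --- all built from the unit map $\eta \colon R \to B$ --- into a splitting, or at least a finite natural filtration, of the $\omega$-cube $\TAQ^R(X^\omega)$. If the splitting proves delicate, a filtration argument suffices: the iterated homotopy cofiber of a cube that is an extension of cubes inherits the minimum of their co-Cartesian estimates, via the long exact sequence of total cofibers. A secondary point to pin down is that the cofibrancy hypotheses required for flat base change and transitivity are met here, which is guaranteed by the running assumption that $A$ and $B$ are $R$-com-alg-cofibrant and hence flat as $R$-modules.
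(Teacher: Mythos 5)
Your argument is correct in substance, but it follows a genuinely different route from the paper's. The paper's proof invokes the Basterra--Mandell stabilization formula $\TAQ^R(A) \simeq \hocolim_m \Sigma^{-m}\,(S^m \otimes_R A)/A$ and reads off the co-Cartesian estimate from the connectivity of the maps $A \wedge_R (B/R)^{\wedge_R n} \to (S^m \otimes_R A) \wedge_R ((S^m \otimes_R B)/R)^{\wedge_R n}$, reusing Lemma~\ref{X-is-id-cartesian} at the level of the tensored-up module cubes; this shows the full $n$-cube $\TAQ^R(X^n)$ is $(2n-3)$-co-Cartesian, hence $(n-2)$-Cartesian, which already makes $\eta_n$ unboundedly connected. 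You instead split $\TAQ^R(X^n)$ by the coproduct (Leibniz) formula for $\TAQ$ -- flat base change plus the naturally split transitivity cofiber sequence -- into an $A$-layer and $n$ slot-layers, each controlled by Lemma~\ref{X-is-id-cartesian} together with the $\TAQ$ Hurewicz theorem (which does give that $\TAQ^R(B)$ is $1$-connected). Your route buys a slightly sharper conclusion (every $d$-subcube is $(2d-1)$-co-Cartesian, so $\eta_n$ is $n$-connected rather than $(n-2)$-connected) and makes the structural reason for descent visible, at the cost of the naturality bookkeeping for the iterated splitting that you rightly identify as the delicate point; the filtration fallback you sketch, or the observation that the section of the transitivity sequence is induced by the $R$-algebra map $C \to C \wedge_R D$ and is therefore natural in the pair, disposes of it. The paper's route sidesteps the splitting entirely by working with a single explicit formula. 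Both arguments are complete modulo standard inputs, and both suffice for the proposition.
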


\begin{proof}
By \cite{BM05}*{Thm.~4}, there is an equivalence
$$
\TAQ^R(A) \simeq \hocolim_m \, \Sigma^{-m} (S^m \otimes_R A)/A
$$
of $A$-module spectra, where $(S^m \otimes_R A)/A$ denotes the homotopy
cofiber of the map $A \to S^m \otimes_R A$ associated to the base point in
$S^m = (S^1)^{\wedge m}$.  The map is at least $(m-1)$-connected, for each
$m\ge0$, and likewise for $B$ in place of~$A$.  It follows that the map
$$
A \wedge_R (B/R) \wedge_R \dots \wedge_R (B/R) \longto
(S^m \otimes_R A) \wedge_R (S^m \otimes_R B)/R \wedge_R \dots \wedge_R
	(S^m \otimes_R B)/R 
$$
(with $n\ge1$ copies of~$B/R$ and of $(S^m \otimes_R B)/R$) is at least
$(m+2n-3)$-connected.  Hence the $n$-cube
$$
T \mapsto \Sigma^{-m} \, (S^m \otimes_R X^n(T))/X^n(T)
$$
(with $X^n = X^n_R(A,B)$) is $(2n-3)$-co-Cartesian, for each $m\ge0$.
Thus the $n$-cube $T \mapsto \TAQ^R(X^n(T))$
is $(2n-3)$-co-Cartesian and $(n-2)$-Cartesian, so that
$$
\eta_n \: \TAQ^R(A) \longto \holim_{P^n} \, \TAQ^R(X^n_R(A,B))
$$
is at least $(n-2)$-connected.  Passing to the homotopy limit over~$n$,
it follows that $\eta$ is an equivalence.
\end{proof}

\subsection{Less commutative examples}

Let us return to the situation where $A$ is not necessarily commutative.
In Subsection~\ref{subsec:amres} we took $B$ to be commutative to ensure
that $\mu \: B \wedge_R B \to B$ and the codegeneracy maps $s^j \: Y^{q+1}
\to Y^q$ are morphisms of $R$-algebras.  For non-commutative~$B$, we can
still define what we might call a precosimplicial $R$-algebra $[q] \mapsto
Y^q$, where now $[q]$ ranges over the subcategory $M \subset \Delta$ with
morphisms the injective, order-preserving functions.  The functor $f \:
P \to \Delta$ defined at the beginning of Subsection~\ref{subsec:cubvscos}
factors as the composite of a functor $e \: P \to M$ and the inclusion
$i \: M \to \Delta$.  Here $e$ is not left cofinal, so the analogue of
Proposition~\ref{Delta-vs-P} does not hold for general precosimplicial
spectra~$Z^\bullet$.  On the other hand, $i \: M \to \Delta$ is left
cofinal, see \cite{DD77}*{3.17}, so for cosimplicial $Y^\bullet$ the
canonical map
$$
\holim_\Delta Y^\bullet \overset{\simeq}\longto \holim_M Y^\bullet \circ i
$$
is an equivalence.  Hence
$$
\holim_M Z^\bullet \overset{\simeq}\longto \holim_P Z^\bullet \circ e
$$
is an equivalence for each precosimplicial spectrum $Z^\bullet = Y^\bullet
\circ i$ that admits an extension to a cosimplicial spectrum.

By an $\sO$ $R$-ring spectrum, for an operad~$\sO$, we mean an
$\sO$-algebra in the category of $R$-modules.  We now relax the
commutativity condition on $B$ to only ask that it is an $E_2$ $R$-ring
spectrum, i.e., an $\sO$ $R$-ring spectrum for some $E_2$ operad~$\sO$.
Then $B$ is equivalent to a monoid in a category of $A_\infty$
$R$-ring spectra, by \cite{BFV07}*{Thm.~C}, since the tensor product
of the associative operad and the little intervals operad $\sC_1$ is
an $E_2$ operad, and $\sC_1$ is an $A_\infty$ operad.  In other words,
we may assume that $\eta \: R \to B$ and $\mu \: B \wedge_R B \to B$ are
morphisms of $\sC_1$ $R$-ring spectra, so that the Amitsur resolution $A
\to Y^\bullet_R(A,B)$ is a coaugmented cosimplicial object in
the category of connective $\sC_1$ $R$-ring spectra.

Using a monadic bar construction \cite{May72}*{\S9} to functorially turn
$\sC_1$-algebras into monoids, we may replace this Amitsur resolution
with an equivalent coaugmented cosimplicial connective $R$-algebra $A
\to \bar Y^\bullet_R(A,B)$, which we might denote by $\bar Y_R(A,B)$.
Applying a homotopy functor $F$ from connective $R$-algebras to spectra,
we thus obtain a coaugmented cosimplicial spectrum $F(A) \to F(\bar
Y^\bullet_R(A,B))$.

The $\omega$-cubical diagram $\bar Y_R(A,B) \circ f_\eta$ remains
equivalent to the Amitsur $\omega$-cube associated to the $R$-module
$A$ and the unit map $\eta \: R \to B$ of $\sC_1$ $R$-ring spectra.
Replacing $B$ with an equivalent $R$-algebra~$\bar B$, we obtain an
equivalent $\omega$-cube $X^\omega_R(A, \bar B)$.  Hence there is a
chain of equivalences
$$
X^\omega_R(A, \bar B) \simeq \bar Y_R(A, B) \circ f_\eta \,.
$$
Substituting this for Lemma~\ref{X-is-Yf} in the proof of
Theorem~\ref{K-THH-TC-descent}, we obtain the following generalization
of that theorem.

\begin{theorem} \label{E2-descent}
Let $R$ be a connective commutative $\bS$-algebra, let $A$ be a connective
$R$-algebra, and let $B$ be a connective $E_2$ $R$-ring spectrum.  Suppose
that the unit map $\eta \: R \to B$ is $1$-connected.  Then the functors
$F = K$, $\THH$ and $\TC$, as well as their intermediate variants,
satisfy cosimplicial descent at $A$ along $R \to B$, in the sense that
$$
\eta \: F(A) \overset{\simeq}\longto
	\holim_\Delta F(\bar Y^\bullet_R(A,B))
$$
is an equivalence.  Here $\bar Y^q_R(A,B)$ and $Y^q_R(A,B)$ are
equivalent as $A_\infty$ $R$-ring spectra, for each $q\ge0$.
\end{theorem}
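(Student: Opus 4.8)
The plan is to mirror the proof of Theorem~\ref{K-THH-TC-descent}, replacing the strict identification of Lemma~\ref{X-is-Yf} by the chain of equivalences $X^\omega_R(A, \bar B) \simeq \bar Y_R(A,B) \circ f_\eta$ assembled above and letting homotopy invariance carry the argument through. All of the genuinely homotopy-theoretic input is already available: Theorems~\ref{K-Cartesian} and~\ref{THH-TC-Cartesian} show that each of the functors $F$ satisfies cubical descent at $A$ along the unit map of a strictly associative connective $R$-algebra, provided that unit map is $1$-connected. First I would apply these to the rectified $R$-algebra $\bar B$: since $\bar B \simeq B$ and $\eta \: R \to B$ is $1$-connected, the unit map $R \to \bar B$ is again $1$-connected, so the theorems give that
$$
\eta \: F(A) \overset{\simeq}\longto \holim_P F(X^\omega_R(A, \bar B))
$$
is an equivalence.

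Next I would transport this conclusion along the chain of equivalences. Because $F$ is a homotopy functor and $\holim_P$ is homotopy invariant on (fibrant) diagrams, the levelwise equivalence of $\omega$-cubes $X^\omega_R(A, \bar B) \simeq \bar Y_R(A,B) \circ f_\eta$, which is compatible with the common value $A$ at the initial vertex $\varnothing$, induces an equivalence between the natural map above and the map $\eta \: F(A) \to \holim_P F(\bar Y_R(A,B) \circ f_\eta)$. Hence $F$ satisfies cubical descent over the cube $\bar Y_R(A,B) \circ f_\eta$. Applying Corollary~\ref{cosimp-vs-cube} to the coaugmented cosimplicial connective $R$-algebra $Y = \bar Y_R(A,B)$, whose associated cube is $X = Y \circ f_\eta$, then converts cubical descent over $X$ into cosimplicial descent over $Y$, which is exactly the assertion that $\eta \: F(A) \to \holim_\Delta F(\bar Y^\bullet_R(A,B))$ is an equivalence.

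The hard part is not this final assembly but the rectification that feeds into it, already carried out in the preceding discussion. Passing from the $E_2$ structure on $B$ to a strictly associative model rests on \cite{BFV07}*{Thm.~C} and the monadic bar construction of \cite{May72}*{\S9}, and the delicate point is that this replacement must be functorial enough to produce the coaugmented cosimplicial connective $R$-algebra $\bar Y_R(A,B)$ and to make the zigzag $X^\omega_R(A, \bar B) \simeq \bar Y_R(A,B) \circ f_\eta$ natural in $T \in P_\eta$. In effect one replaces the strict equality of Lemma~\ref{X-is-Yf} by a coherent chain of weak equivalences, and the only thing I would need to verify for the descent conclusion is that each map in the chain is a levelwise equivalence of $\omega$-cubes fixing the initial vertex. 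The recorded $A_\infty$-equivalence $\bar Y^q_R(A,B) \simeq Y^q_R(A,B)$ is precisely what ensures this level by level, and guarantees that the connectivity estimates of Lemma~\ref{X-is-id-cartesian} underlying Theorems~\ref{K-Cartesian} and~\ref{THH-TC-Cartesian} remain valid for the rectified cube.
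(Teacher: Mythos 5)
Your proposal is correct and follows essentially the same route as the paper: the paper's proof consists precisely of substituting the chain of equivalences $X^\omega_R(A,\bar B) \simeq \bar Y_R(A,B) \circ f_\eta$ (constructed via \cite{BFV07}*{Thm.~C} and the monadic bar construction) for Lemma~\ref{X-is-Yf} in the proof of Theorem~\ref{K-THH-TC-descent}, i.e., combining Theorems~\ref{K-Cartesian} and~\ref{THH-TC-Cartesian} applied to $\bar B$ with Corollary~\ref{cosimp-vs-cube}. Your elaboration of the homotopy invariance of $F$ and $\holim_P$ needed to transport cubical descent along the levelwise equivalence of $\omega$-cubes is exactly the implicit content of the paper's one-line argument.
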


\section{Applications}

\subsection{Algebraic $K$-theory of spaces}

For each topological group $\Gamma$, the spherical group ring
$\bS[\Gamma] = \Sigma^\infty \Gamma_+$ is a connective $\bS$-algebra.
When $X \simeq B\Gamma$, the algebraic $K$-theory of $\bS[\Gamma]$ is a
model for Waldhausen's algebraic $K$-theory $A(X)$ of the space $X$,
see \cite{Wal85}.  When $X$ is a high-dimensional compact (topological,
piecewise-linear or differentiable) manifold, $A(X)$ is closely related
to the space of $h$-cobordisms on $X$ and the group of automorphisms of
$X$ \cite{WJR13}.  This motivates the interest in the algebraic $K$-theory
of $\bS$ and the associated spherical group rings.

Base change along the Hurewicz map $\bS \to H\bZ$ induces rational
equivalences $\bS[\Gamma] \to H\bZ[\Gamma]$ and $K(\bS[\Gamma]) \to
K(H\bZ[\Gamma])$.  In particular, $A(*) = K(\bS) \to K(H\bZ) = K(\bZ)$
is a rational equivalence, and Borel's rational computation of the
algebraic $K$-theory of the integers \cite{Bor74} gives strong rational
information about the $h$-cobordism spaces and automorphism groups of
high-dimensional highly-connected manifolds \cite{Igu88}*{p.~7}.

\subsection{Descent along $\bS \to H\bZ$}

To obtain torsion information about $A(X) = K(\bS[\Gamma])$ one can instead
consider the cosimplicial resolution
$$
\xymatrix{
\bS \ar[r]^-{\eta} & H\bZ \ar@<0.8ex>[r] \ar@<-0.8ex>[r]
        & H\bZ \wedge H\bZ \ar[l] \ar@<1.6ex>[r] \ar[r] \ar@<-1.6ex>[r]
        & \ \cdots \ar@<0.8ex>[l] \ar@<-0.8ex>[l]
}
$$
in the category of connective $\bS$-algebras, and the induced coaugmented
cosimplicial spectrum
$$
\xymatrix{
K(\bS[\Gamma]) \ar[r]^-{\eta} & K(H\bZ[\Gamma]) \ar@<0.8ex>[r] \ar@<-0.8ex>[r]
        & K((H\bZ \wedge H\bZ)[\Gamma]) \ar[l] \ar@<1.6ex>[r] \ar[r] \ar@<-1.6ex>[r]
        & \ \cdots \ar@<0.8ex>[l] \ar@<-0.8ex>[l] \,.
}
$$
By Theorem~\ref{K-THH-TC-descent} the natural map from $K(\bS[\Gamma])$
to the homotopy limit of this cosimplicial spectrum is an equivalence,
and similarly for $\THH$ and $\TC$.  These cases of descent, along
$\bS \to H\bZ$, are essentially those studied in \cite{Dun97}. See also
\cite{Tsa00} for descent results in the context of commutative rings.

A computational drawback with this approach is the structure of the
smash product
$$
(H\bZ \wedge \dots \wedge H\bZ)[\Gamma]
	= \bS[\Gamma] \wedge H\bZ \wedge \dots \wedge H\bZ
$$
with $(q+1)$ copies of~$H\bZ$.  It is a connective $H\bZ$-algebra,
hence equivalent to a simplicial ring, but for $q\ge1$ the algebraic
$K$-theory and topological cyclic homology of this simplicial ring appear
to be difficult to analyze.

\subsection{Descent along $\bS \to MU$}

Experience from algebraic topology shows that the complex bordism spectrum
$MU$ is a convenient stopping point on the way from the sphere spectrum
to the integers:
$$
\bS \longto MU \longto H\bZ \,.
$$
Here $MU$ is a commutative $\bS$-algebra with $1$-connected unit map $\bS
\to MU$.  The coefficient ring $MU_* = \pi_*(MU) = \bZ[x_k \mid k\ge1]$
and the homology algebra $H_*(MU) \cong H_*(BU) = \bZ[b_k \mid k\ge1]$
are explicitly known \cite{Mil60}, \cite{Nov62}, with $|x_k| = |b_k| = 2k$
for each~$k$.  The associated cosimplicial resolution
$$
\xymatrix{
\bS \ar[r]^-{\eta} & MU \ar@<0.8ex>[r] \ar@<-0.8ex>[r]
        & MU \wedge MU \ar[l] \ar@<1.6ex>[r] \ar[r] \ar@<-1.6ex>[r]
        & \ \cdots \ar@<0.8ex>[l] \ar@<-0.8ex>[l]
}
$$
induces the coaugmented cosimplicial spectrum
$$
\xymatrix{
K(\bS[\Gamma]) \ar[r]^-{\eta} & K(MU[\Gamma]) \ar@<0.8ex>[r] \ar@<-0.8ex>[r]
        & K((MU \wedge MU)[\Gamma]) \ar[l] \ar@<1.6ex>[r] \ar[r] \ar@<-1.6ex>[r]
        & \ \cdots \ar@<0.8ex>[l] \ar@<-0.8ex>[l] \,.
}
$$
By Theorem~\ref{K-THH-TC-descent}, applied with $R = \bS$, $A =
\bS[\Gamma]$ and $B = MU$, the natural map from $K(\bS[\Gamma])$ to the
homotopy limit of this cosimplicial spectrum is an equivalence, and there
are corresponding equivalences for $\THH$ and $\TC$.

From its definition as an $E_\infty$ Thom spectrum, $MU$ comes equipped
with a Thom equivalence $MU \wedge MU \simeq MU \wedge BU_+$.  Hence the
smash product
$$
(MU \wedge \dots \wedge MU)[\Gamma]
	\simeq \bS[\Gamma] \wedge MU \wedge BU^q_+
$$
that occurs in codegree~$q$ is not significantly more complicated for
$q\ge1$ than for $q=0$.  This means that it may be more realistic to
study $K(\bS)$ by descent along $\bS \to MU$ than by descent along
$\bS \to H\bZ$.  We propose that in order to understand $A(*) = K(\bS)$
from the chromatic point of view \cite{MRW77}, \cite{Rav84}, one should
study this cosimplicial resolution, starting with $K(MU)$ and continuing
with $K(MU \wedge BU^q_+)$ for each $q\ge0$, together with the associated
descent spectral sequence converging to $\pi_* K(\bS)$.  Similar remarks
apply for $A(X) = K(\bS[\Gamma])$ and for the functors $\THH$ and $\TC$,
see Theorem~\ref{descentspseq}.

\subsection{(Hopf-)Galois descent}

In the language of \cite{Rog08}*{\S12}, the map $\bS \to MU$ is a
Hopf--Galois extension of commutative $\bS$-algebras.  Our result can be
viewed as proving $1$-connected Hopf--Galois descent for $K$, $\THH$ and
$\TC$, but the actual Hopf $\bS$-algebra coaction plays no role in the proof.
Analogously, consider a $G$-Galois extension $A \to B$ of commutative
$\bS$-algebras, in the sense of \cite{Rog08}*{\S4}, with $G$ a finite group.
The equivalence $B \wedge_A B \simeq F(G_+, B) \cong F(G^2_+, B)^G$
induces a level equivalence of cosimplicial resolutions from
$$
\xymatrix{
A \ar[r]^-{\eta} & B \ar@<0.8ex>[r] \ar@<-0.8ex>[r]
        & B \wedge_A B \ar[l] \ar@<1.6ex>[r] \ar[r] \ar@<-1.6ex>[r]
        & \ \cdots \ar@<0.8ex>[l] \ar@<-0.8ex>[l]
}
$$
to
$$
\xymatrix{
A \ar[r]^-{\eta} & F(G_+, B)^G \ar@<0.8ex>[r] \ar@<-0.8ex>[r]
        & F(G^2_+, B)^G \ar[l] \ar@<1.6ex>[r] \ar[r] \ar@<-1.6ex>[r]
        & \ \cdots \ar@<0.8ex>[l] \ar@<-0.8ex>[l] \,,
}
$$
i.e., from the Amitsur resolution $Y^\bullet = Y^\bullet_A(A,B)$ to the
cosimplicial commutative $\bS$-algebra $F(E_\bullet G_+, B)^G$ obtained
by mapping out of the free contractible simplicial $G$-set $E_\bullet G \:
[q] \mapsto E_q G = G^{q+1}$.

Algebraic $K$-theory preserves equivalences and commutes with finite
products, so the cosimplicial spectra $K(Y^\bullet)$, $K(F(E_\bullet G_+,
B)^G)$ and $F(E_\bullet G_+, K(B))^G$ are level equivalent.  Hence the
homotopy limit of $K(Y^\bullet)$ is equivalent to the homotopy fixed
point spectrum $F(EG_+, K(B))^G = K(B)^{hG}$, where $EG = |E_\bullet G|$.
(The homotopy type of $K(B)^{hG}$ only depends on the homotopy type of
$K(B)$ as a spectrum with $G$-action, and the $G$-action is determined
by functoriality.)
The canonical map $\eta \: K(A) \to K(B)^{hG}$ is not in general an
equivalence, so algebraic $K$-theory does not in general satisfy descent
along Galois extensions $A \to B$.  However, a recent result of Clausen,
Mathew, Naumann and Noel \cite{CMNN}*{Thm.~1.7} shows that, after
any ``periodic localization'', algebraic $K$-theory satisfies descent
along all maps $A \to B$ of commutative $\bS$-algebras for which $B$ is
dualizable as an $A$-module and the restriction map $K_0(B) \to K_0(A)$
is rationally surjective.  Again, the Galois condition plays no role
for their proof.  See also \cite{Tho85} for the corresponding result for
Bott localized algebraic $K$-theory of commutative rings (or schemes).

\subsection{Descent along $\bS \to X(n)$}

There is a sequence of $E_2$ ring spectra $X(n)$ interpolating between
$\bS$ and $MU$, see \cite{DHS88}.  Recall that $MU = BU^\gamma$ is the
Thom spectrum of a virtual vector bundle~$\gamma$ over~$BU$.  For each
$n\ge2$, the Thom spectrum $X(n) = \Omega SU(n)^\gamma$, of the pullback
of $\gamma$ over the double loop map $\Omega SU(n) \to \Omega SU \simeq
BU$, is an $E_2$ ring spectrum with $1$-connected unit map $\bS \to X(n)$.
There are natural maps of $E_2$ ring spectra
$$
\bS \longto \dots \longto X(n) \longto \dots \longto MU \longto H\bZ
$$
connecting these examples to those previously discussed.  We identify
$H_*(X(n)) \cong H_*(\Omega SU(n))$ with the subalgebra $\bZ[b_1, \dots,
b_{n-1}]$ of $H_*(MU) \cong H_*(BU) =  \bZ[b_k \mid k\ge1]$.

By Theorem~\ref{E2-descent}, the functors $K$, $\THH$ and $\TC$ satisfy
descent along $\bS \to X(n)$ for each $n\ge2$.  There are Thom equivalences
$X(n) \wedge X(n) \simeq X(n) \wedge \Omega SU(n)_+$, so the study
of $K(\bS)$ by descent along $\bS \to X(n)$ leads to the study of the
algebraic $K$-theory of $X(n) \wedge (\Omega SU(n))^q_+$ for $q\ge0$,
and similarly for $K(\bS[\Gamma])$, $\THH$ and $\TC$.
The $E_2$ ring spectra $X(n)$ are closer to $\bS$ than $MU$, hence
$K(X(n))$ can yield finer information about $K(\bS)$ than $K(MU)$ does.
However, like in the case of $\bS$, the homotopy groups of $X(n)$ are not
explicitly known, so a direct analysis of $\pi_* \THH(X(n))$ and $\pi_*
\TC(X(n))$ may be less feasible than in the case of $MU$.

\subsection{Trace methods}

The cyclotomic trace map $\trc \: K(B) \to \TC(B;p)$ introduced in
\cite{BHM93}, in conjunction with the relative equivalence theorem from
\cite{Dun97}, is the main method available for calculating the algebraic
$K$-groups of connective $\bS$-algebras other than (simplicial) rings.
In the case of the sphere spectrum, $\TC(\bS;p)$ is $p$-adically
equivalent to $\bS \vee \Sigma \bC P^\infty_{-1}$, so calculations of
$\pi_* K(\bS)$ are possible in a moderate range of degrees \cite{Rog02},
\cite{Rog03} (see also recent work of Blumberg--Mandell \cite{BM} at
irregular primes).  Nonetheless, complete calculations are at least as
hard as those for $\pi_*(\bS)$, hence appear to be out of reach.

The difficulty of understanding the stable homotopy groups of spheres
can be formulated as the difficulty of understanding the Adams--Novikov
spectral sequence \cite{Nov67}
\begin{equation} \label{ANSS}
E_2^{s,t} = \Ext_{MU_*MU}^{s,t}(MU_*, MU_*)
	\Longrightarrow_s \pi_{t-s}(\bS),
\end{equation}
i.e., to understand the descent spectral sequence
$$
E_2^{s,t} = \pi^s \pi_t Y^\bullet
	\Longrightarrow_s \pi_{t-s} \holim_\Delta Y^\bullet
$$
associated with the cosimplicial commutative $\bS$-algebra $Y^\bullet =
Y^\bullet_\bS(\bS,MU)$, with $Y^q = MU \wedge \dots \wedge MU \simeq
MU \wedge BU^q_+$.  An advantage of this approach is that chromatic
phenomena in $\pi_*(\bS)$ are more readily visible at the $E_2$-term of
the Adams--Novikov spectral sequence.

By analogy, the difficulty of understanding $\pi_* K(\bS)$ and
{$\pi_*\TC(\bS; p)$} can be separated into two parts: first that of
understanding the cosimplicial objects $[q] \mapsto \pi_* K(Y^q)$ and
$[q] \mapsto \pi_* \TC(Y^q; p)$, and secondly that of understanding the
behavior of the descent spectral sequences
$$
E_2^{s,t} = \pi^s \pi_t K(Y^\bullet)
	\Longrightarrow_s \pi_{t-s} K(\bS)
$$
and
$$
E_2^{s,t} = \pi^s \pi_t \TC(Y^\bullet; p)
	\Longrightarrow_s \pi_{t-s} \TC(\bS; p) \,.
$$
The first aim of understanding $\pi_* K(MU)$ and $\pi_* \TC(MU;
p)$, corresponding to $q=0$, then plays an analogous role to that of
understanding $MU_* = \pi_*(MU)$.  An optimist may seek to discern
chromatic phenomena in $\pi_* K(\bS)$ and $\pi_* \TC(\bS; p)$ at the
level of these $E_2$-terms.

\subsection{Descent for $\THH$}

As an illustration, Theorem~\ref{descentspseq} for $\THH$ at $\bS$
along $\bS \to MU$ gives a strongly convergent descent spectral sequence
$$
E_2^{s,t} = \pi^s \pi_t \THH(Y^\bullet)
	\Longrightarrow_s \pi_{t-s} \THH(\bS) \,.
$$
There is an equivalence $\THH(MU) \simeq MU \wedge SU_+$, by
\cite{BCS10}*{Cor.~1.1}, and the Atiyah--Hirzebruch spectral sequence
$E^2_{*,*} = H_*(SU; MU_*) \Longrightarrow \pi_*(MU \wedge SU_+)$
collapses at $E^2$ to give the algebra isomorphism
$$
\pi_* \THH(MU) \cong MU_* \otimes E(e_k \mid k\ge1) = LE
$$
of \cite{MS93}*{Rmk.~4.3}.  Here $L = MU_* = \bZ[x_k \mid k\ge1]$ is
the Lazard ring, $E = E(e_k \mid k\ge1)$ is the exterior algebra over
$\bZ$ on a sequence of generators $e_k$, with $|e_k| = 2k+1$, and $LE =
L \otimes E$ is shorthand notation for their tensor product.  Similarly,
\begin{align*}
MU_* \THH(MU) &\cong \pi_*(MU \wedge MU \wedge_{MU} \THH(MU)) \\
	&\cong LB \otimes_L LE \cong LBE
\end{align*}
is flat as a left $MU_*$-module, where $LB = MU_* MU$ and $B = H_*(MU)
\cong \bZ[b_k \mid k\ge1]$, and
\begin{align*}
\pi_* \THH(MU \wedge MU)
        &\cong \pi_*( \THH(MU) \wedge_{MU} \wedge MU \wedge \THH(MU) ) \\
	&\cong LE \otimes_L LBE \cong LE \otimes BE \,.
\end{align*}
In general, $\pi_* \THH(Y^q) \cong LE \otimes (BE)^{\otimes q}$, and $[q]
\mapsto \pi_* \THH(Y^q)$ is the cobar construction associated to the
split Hopf algebroid $(LE, LE \otimes BE)$, see~\cite{Rav86}*{App.~A1}.
Hence the descent spectral sequence for $\THH$ takes the form
\begin{equation} \label{thhdescent}
E_2^{s,t} = \Ext^{s,t}_{LE \otimes BE}(LE, LE)
	\Longrightarrow_s \pi_{t-s} \THH(\bS) \,.
\end{equation}
The unit inclusion $\bZ \to E$ induces an equivalence 
$$
(MU_*, MU_* MU) = (L, LB) \longto (LE, LE \otimes BE)
$$
of Hopf algebroids, which induces an isomorphism from the 
Adams--Novikov spectral sequence~\eqref{ANSS} to the descent spectral
sequence~\eqref{thhdescent}.  This is of course compatible with the
identity $\bS = \THH(\bS)$, and shows, somewhat tautologically, that
the descent spectral sequence for $\THH$ at $\bS$ along $\bS \to MU$
has an $E_2$-term that is susceptible to chromatic analysis along the
lines of \cite{MRW77} and~\cite{Rav86}*{Ch.~5}.

By contrast, descent for $\THH$ at $\bS$ along $\bS \to H\bZ$ leads to
the study of $\pi_* \THH(H\bZ \wedge \dots \wedge H\bZ) \cong \pi_*(
\THH(\bZ) \wedge \dots \wedge \THH(\bZ) )$, with $(q+1)$ copies of
$H\bZ$ or $\THH(\bZ)$, and while $\pi_* \THH(\bZ)$ is explicitly known
\cite{BM94}*{\S5}, the term $\pi_*( \THH(\bZ) \wedge \THH(\bZ) )$ is not
flat over $\pi_* \THH(\bZ)$, and there is no description of the resulting
$E_1$-term as the cobar complex of a Hopf algebroid.  If we instead were
to study the functor $F(A) = \THH(A) \wedge S/p$, where $S/p$ is the
mod~$p$ Moore spectrum, then $\pi_* F(H\bZ \wedge H\bZ) = \pi_*(\THH(\bZ)
\wedge \THH(\bZ); \bZ/p)$ is a flat Hopf algebroid over $\pi_* F(H\bZ)
= \pi_*(\THH(\bZ); \bZ/p)$.  The associated cobar complex is isomorphic
to the $E_1$-term of the descent spectral sequence for $F$ at $\bS$ along
$\bS \to \bZ$, which in turn is isomorphic to the $\THH(\bZ)$-based Adams
spectral sequence for $S/p$, see \cite{MNN17}*{Prop.~2.14}.  The canonical
$\THH(\bZ)$-based tower for $S/p$ is also a mod~$p$ Adams tower for $S/p$,
so these spectral sequences are therefore isomorphic to the mod~$p$
Adams spectral sequence for $S/p$, from the $E_2$-term and onwards.
% However, when replacing $\THH$ with $K$ or one of the intermediate
% functors, reduction mod~$p$ is no longer sufficient for giving a
% well-behaved Hopf algebroid.

\subsection{Fixed points of $\THH$}

The next steps toward analyzing descent for $\TC$ along $\bS \to MU$
would be to study descent for the fixed point functor $\THH(-)^C$
along $\bS \to MU$, for each $C = C_{p^m}$.  Let $\THH(A)^{tC} =
[\widetilde{EC} \wedge F(EC_+, \THH(A))]^C$ denote the $C$-Tate
construction on $\THH(A)$, i.e., the $C$-fixed points of the Tate
$C$-spectrum of \cite{GM95}*{pp.~3-4}.  The comparison maps
\begin{align*}
\Gamma_m &\: \THH(MU)^{C_{p^m}} \longto \THH(MU)^{hC_{p^m}} \\
\hat\Gamma_m &\: \THH(MU)^{C_{p^{m-1}}} \longto \THH(MU)^{tC_{p^m}}
\end{align*}
are known to be equivalences after $p$-completion, by
\cite{LNR11}*{Thm.~1.1} in the case $m=1$, hence also for the cases
$m\ge2$ by \cite{Tsa98}*{Thm.~2.4} or \cite{BBLNR14}*{Thm.~2.8}.
It is very likely that the same methods will apply when $Y^0 = MU$ is replaced
by $Y^q = MU \wedge \dots \wedge MU$ for $q\ge1$.  If so, the
$p$-completed spectral sequence
$$
E_2^{s,t} = \pi^s \pi_t \THH(Y^\bullet)^{C_{p^m}}_p
	\Longrightarrow_s \pi_{t-s} \THH(\bS)^{C_{p^m}}_p
$$
can equally well be analyzed using $C_{p^m}$-homotopy fixed points or
$C_{p^{m+1}}$-Tate constructions in place of $C_{p^m}$-fixed points.
A first step in this direction would be to determine the differential
structure of the $C_p$-Tate spectral sequence
$$
\hat E^2_{s,t} = \hat H^{-s}(C_p; \pi_t \THH(MU))
	\Longrightarrow_s \pi_{s+t} \THH(MU)^{tC_p} \,.
$$
By truncation to the second quadrant, this would determine the
$C_p$-homotopy fixed point spectral sequence converging to $\pi_*
\THH(MU)^{hC_p}$, hence also $\pi_* \THH(MU)^{C_p}$ after $p$-completion.

The comparison maps $\Gamma_m \: \THH(\bS)^{C_{p^m}} \to
\THH(\bS)^{hC_{p^m}}$ and $\hat\Gamma_m \: \THH(\bS)^{C_{p^{m-1}}} \to
\THH(\bS)^{tC_{p^m}}$ are also $p$-adic equivalences, by the proven Segal
conjecture \cite{Car84}, but even in the case $m=1$ the differential
structure of the $C_p$-Tate spectral sequence converging to $\pi_*
\THH(\bS)^{tC_p} \cong \pi_*(\bS)$ is not known \cite{Ada74}.
In the case of $\THH(\bZ)$ the maps $\Gamma_m$ and $\hat\Gamma_m$ are
not quite equivalences, but they do induce isomorphisms in homotopy
with mod~$p$ coefficients in non-negative degrees.  The structure
of the $C_{p^m}$-Tate spectral sequence converging to $\pi_*(
\THH(\bZ)^{tC_{p^m}}; \bZ/p)$ is known \cite{BM95}, \cite{Rog99}, but
these calculations appear to be difficult to extend to the case of
$\THH(H\bZ \wedge \dots \wedge H\bZ)$.

\subsection{Topological periodic homology}

The limiting maps
\begin{align*}
\Gamma &\: \TF(MU; p) \longto \holim_m \THH(MU)^{hC_{p^m}}
	\longleftarrow \THH(MU)^{h\bT} \\
\hat\Gamma &\: \TF(MU; p) \longto \holim_m \THH(MU)^{tC_{p^m}}
	\longleftarrow \THH(MU)^{t\bT}
\end{align*}
are also equivalences after $p$-completion, cf.~\cite{BM95}*{(2.11)}
and~\cite{AR02}*{Thm.~5.7}.  Thus the $E^2$-terms of the $\bT$-homotopy
fixed point and $\bT$-Tate spectral sequences
\begin{align*}
E^2_{*,*} &= \bZ[t] \otimes \pi_* \THH(MU)
	\Longrightarrow \pi_* \THH(MU)^{h\bT} \\
\hat E^2_{*,*} &= \bZ[t^{\pm1}] \otimes \pi_* \THH(MU)
	\Longrightarrow \pi_* \THH(MU)^{t\bT}
\end{align*}
are known, and both converge to $\pi_* \TF(MU; p)$ after $p$-completion.
Note that $\TP(MU) = \THH(MU)^{t\bT}$ is the arithmetically interesting
topological periodic homology studied by Hesselholt \cite{Hes}, also
known as periodic topological cyclic homology or topological de\,Rham
homology.
As in the case of $\TF(\bS; p)$, the inverse Frobenius operator
$\varphi^{-1} = R \: \TF(MU; p) \to \TF(MU; p)$ extends to $\TP(MU)$
after $p$-completion, without any further localization.

\subsection{Rational analysis}

The rational algebraic $K$-groups of $MU$, i.e., $\pi_* K(MU) \otimes
\bQ$, were determined in \cite{AR12}*{Thm.~4.2} by using Goodwillie's
theorem \cite{Goo86}.  The Poincar{\'e} series is
$$
\frac{x^5}{1-x^4} + \frac{1 + x h(x)}{1+x}
= 1 + x^3 + 3 x^5 + 3 x^7 + x^8 + 6 x^9 + 2 x^{10} + \dots
$$
where
$$
h(x) = \prod_{k\ge1} \frac{1 + x^{2k+1}}{1 - x^{2k}} \,.
$$
A similar result holds for each $\pi_* K(Y^q) \otimes \bQ$, where $Y^q =
MU \wedge \dots \wedge MU$, with $(q+1)$ copies of~$MU$.  Since $\pi_0
Y^q = \bZ$ and $Y^q$ is connective and of finite type, we know
by an easy generalization of \cite{Dwy80}*{Prop.~1.2} that
$E_1^{s,t} = \pi_t K(Y^s)$ is a finitely generated abelian group in
each bidegree~$(s,t)$.  By the following result $E_2^{s,t} = \pi^s \pi_t
K(Y^\bullet)$ is in fact finite in each bidegree, except at the edge
$s=0$, where $E_2^{0,t}$ is rationally isomorphic to $\pi_t K(\bZ)$.

\begin{proposition}
The rationalized descent spectral sequence
$$
E_2^{s,t} \otimes \bQ = \pi^s \pi_t K(Y^\bullet) \otimes \bQ
	\Longrightarrow_s \pi_{t-s} K(\bS) \otimes \bQ \,,
$$
for algebraic $K$-theory at $\bS$ along $\bS \to MU$,
collapses at the $E_2$-term to the edge $s=0$.
\end{proposition}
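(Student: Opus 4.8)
The plan is to show that the rationalized $E_2$-term vanishes for every $s\ge1$; since the spectral sequence is strongly convergent to $\pi_*K(\bS)\otimes\bQ$ by Theorem~\ref{descentspseq}, this forces it to collapse onto the edge $s=0$. As $\otimes\bQ$ is exact it commutes with passage to the cohomology of a normalized cochain complex, so $E_2^{s,t}\otimes\bQ\cong\pi^s(\pi_tK(Y^\bullet)\otimes\bQ)$ and it suffices to prove that the cosimplicial graded $\bQ$-vector space $\pi_*K(Y^\bullet)\otimes\bQ$ is cohomotopically trivial in positive degrees. First I would split off the contribution of the linearization. Since $\pi_0Y^q=\bZ$ with all coface and codegeneracy maps the identity, $[q]\mapsto K(\pi_0Y^q)=K(\bZ)$ is the constant cosimplicial spectrum. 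The unit maps $\bS\to Y^q$ are compatible with the cosimplicial structure, and the composite $\bS\to Y^q\to H\bZ$ is the Hurewicz map, which is a rational equivalence; hence $K(\bS)\otimes\bQ\xrightarrow{\simeq}K(\bZ)\otimes\bQ$ provides a section, natural in~$q$, of the rationalized linearization $K(Y^\bullet)\otimes\bQ\to K(\bZ)\otimes\bQ$. This yields a natural splitting $K(Y^\bullet)\otimes\bQ\simeq K(\bZ)\otimes\bQ\oplus\widetilde K(Y^\bullet)\otimes\bQ$ of cosimplicial spectra, where $\widetilde K=\mathrm{fib}(K(Y^\bullet)\to K(\bZ))$. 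The constant summand has cohomotopy concentrated in degree $s=0$, so the problem reduces to showing $\pi^s(\pi_t\widetilde K(Y^\bullet)\otimes\bQ)=0$ for $s\ge1$.

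Next I would identify the relative term with cyclic homology. Applying Goodwillie's theorem to the $1$-connected linearization $Y^q\to H\pi_0Y^q=H\bZ$, exactly as in the computation of $\pi_*K(MU)\otimes\bQ$ in \cite{AR12}*{Thm.~4.2}, identifies $\widetilde K(Y^q)\otimes\bQ$ with a relative cyclic-homology construction built functorially from $\THH(Y^q)$ through its circle action, i.e.\ a homotopy (co)limit over $B\bT$. The essential point of this step is that the identification is natural in the cosimplicial variable~$q$, so that $\widetilde K(Y^\bullet)\otimes\bQ$ becomes the corresponding $B\bT$-construction applied to the cosimplicial cyclotomic spectrum $\THH(Y^\bullet)$, relative to its value on $H\bZ$.

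The input that controls this construction is the $\THH$-computation already recorded. By the identification of the descent spectral sequence for $\THH$ at $\bS$ along $\bS\to MU$ with the Adams--Novikov spectral sequence~\eqref{thhdescent}, its rationalization has $E_2$-term $\Ext^{s,t}_{MU_*MU\otimes\bQ}(MU_*\otimes\bQ,MU_*\otimes\bQ)$, which vanishes for $s\ge1$ because the Hopf algebroid $(MU_*,MU_*MU)$ of formal group laws becomes trivial over~$\bQ$. Hence $\pi^s(\pi_t\THH(Y^\bullet)\otimes\bQ)=0$ for $s\ge1$. To transport this vanishing through the $B\bT$-construction I would use that, $\bT$ being connected, the associated homotopy fixed-point (or orbit) spectral sequence has $E_2$-term $H^*(B\bT;\bQ)\otimes\pi_*\THH(Y^\bullet)\otimes\bQ$ with trivial coefficients; since the factor $H^*(B\bT;\bQ)$ is constant in~$q$ and $\bQ$ is flat, taking cohomotopy in the $q$-direction gives $H^*(B\bT;\bQ)\otimes\pi^s(\pi_*\THH(Y^\bullet)\otimes\bQ)$, which is again zero for $s\ge1$.

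The main obstacle is the interchange of these two spectral sequences: one must pass from the vanishing of the higher $q$-cohomotopy of the $E_2$-page of the $B\bT$-construction to the vanishing of the higher $q$-cohomotopy of $\pi_*$ of the relative cyclic homology itself. This requires that both the cosimplicial (Bousfield--Kan) spectral sequence in~$q$ and the $\bT$-(fixed-point/orbit) spectral sequence be strongly convergent and compatible, which I expect to follow rationally from the connectivity and finite-type hypotheses on the $Y^q$---so that the relevant towers are $\lim^1$-acyclic and the $B\bT$-construction commutes with $\holim_\Delta$---together with the cosimplicial naturality of Goodwillie's theorem established in the previous step. Granting this, $\pi^s(\pi_t K(Y^\bullet)\otimes\bQ)=0$ for all $s\ge1$, so rationally $E_2=E_\infty$ is concentrated on the edge, where $E_2^{0,t}\otimes\bQ=\pi^0(\pi_tK(Y^\bullet)\otimes\bQ)\cong\pi_tK(\bS)\otimes\bQ\cong\pi_tK(\bZ)\otimes\bQ$, as claimed.
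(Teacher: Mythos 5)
Your first half matches the paper: the same splitting of the constant $K(\bZ)\otimes\bQ$ summand via $K(\bS)\to K(Y^\bullet)\to K(\bZ)$, the same reduction to the cosimplicial fiber of the linearization, and the same appeal to Goodwillie's theorem to identify that fiber rationally with a cyclic-homology construction on $\THH(Y^\bullet)$. The divergence, and the gap, is in how you then prove the vanishing of $\pi^s$ for $s\ge1$. You correctly note that $\pi^s\pi_t\THH(Y^\bullet)\otimes\bQ$ vanishes for $s\ge1$ (via the identification with the rational Adams--Novikov $E_2$-term), and you propose to transport this through the $\bT$-construction because $H^*(B\bT;\bQ)$ is constant in $q$. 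But this only controls the $E_2$-page of the circle spectral sequence. That spectral sequence does \emph{not} collapse rationally: its $d^2$ is Connes' $B$-operator $\sigma$, and the rational relative $K$-groups are governed precisely by the subobject $\im(\sigma)\subset HH_*((BE)^{\otimes q+1})$. Cosimplicial cohomotopy $\pi^s$ is not exact and does not pass to subquotients, so vanishing of $\pi^{\ge1}$ of the $E_2$-page of the $\bT$-spectral sequence gives you nothing about $\pi^{\ge1}$ of $E_3=E_\infty$ or of the abutment. The step you flag as ``the main obstacle'' and attribute to convergence/finite-type issues is in fact the mathematical heart of the problem, and no amount of $\lim^1$-acyclicity will supply it.

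The paper sidesteps this entirely: by \cite{AR12}*{Cor.~2.4} the kernel of $\pi_*K(Y^q)\to\pi_*K(\bZ)$ is rationally identified with $\im(\sigma)\subset(BE)^{\otimes q+1}$ on the nose, so the relative $E_2$-term is the cohomology of the cochain complex $D^*$ with $D^q=\im(\sigma)$. One then writes down an explicit cochain contraction $\epsilon\otimes id_{BE}^{\otimes q}$ of $\bZ\to C^*$ (induced by the augmentation $\epsilon\colon BE\to\bZ$), observes that it commutes with $\sigma$ and hence restricts to a contraction of $0\to D^*$, and concludes $H^*(D^*)=0$. To repair your argument you would need to replace the spectral-sequence interchange with some such explicit acyclicity statement for the cosimplicial object $q\mapsto\im(\sigma)$; the vanishing of the higher cohomotopy of $\pi_*\THH(Y^\bullet)\otimes\bQ$ alone is not enough.
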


\begin{proof}
The vanishing line and strong convergence of the descent spectral
sequence $E_2^{s,t} \Longrightarrow_s \pi_{t-s} K(\bS)$ from
Theorem~\ref{descentspseq} imply the same vanishing line and strong
convergence for the rationalized spectral sequence.

The unit map $\bS \to Y^q$ and the zeroth Postnikov section $Y^q \to
H\bZ$ induce maps $K(\bS) \to K(Y^\bullet) \to K(\bZ)$ that, after
rationalization, split off a copy of $K(\bS)$ from $K(Y^\bullet)$.
It remains to prove that the remainder of the rationalized spectral
sequence, associated to the cosimplicial spectrum with the homotopy
fiber of $K(Y^q) \to K(\bZ)$ in codegree~$q$, collapses to zero at
the $E_2$-term.

The Hurewicz homomorphism $MU_*=\pi_*(MU) \to H_*(MU) \cong B$ is a
rational equivalence, so $\pi_* \THH(MU)$ is rationally isomorphic
to $HH_*(B) = BE$, where $B = \bZ[b_k \mid k\ge1]$ and $E = E(e_k \mid
k\ge1)$.  Connes' $B$-operator on $HH_*(B)$ corresponds to the suspension
operator $\sigma$, which is the differential and derivation given by
$\sigma(b_k) = e_k$ for each $k\ge1$.  Hence the rationalized de\,Rham
homology $H^{dR}_*(B) \otimes \bQ = H_*(BE, \sigma) \otimes \bQ = \bQ$
is trivial in positive degrees.  By \cite{AR12}*{Cor.~2.4}, which is
a consequence of \cite{Goo86}*{II.3.4}, it follows that the trace map
$K(MU) \to \THH(MU)$ identifies the kernel of $\pi_* K(MU) \to \pi_*
K(\bZ)$ with the image of $\sigma \: BE \to BE$, after rationalization.

Similarly, $\pi_* \THH(Y^\bullet)$ is rationally isomorphic to
the cosimplicial resolution $[q] \mapsto HH_*(B^{\otimes q+1})
\cong (BE)^{\otimes q+1}$ associated to $\eta \: \bZ \to BE$.
By \cite{AR12}*{Cor.~2.4} again, the trace map identifies the kernel of
$\pi_* K(Y^q) \to \pi_* K(\bZ)$ with the image of $\sigma \: (BE)^{\otimes
q+1} \to (BE)^{\otimes q+1}$, after rationalization.  Let $(C^*,
\delta)$ and $(D^*, \delta)$ be the associated cochain complexes,
with $C^q = (BE)^{\otimes q+1}$ and $D^q = \im(\sigma) \subset C^q$.
The $E_2$-term of the remainder of the descent spectral sequence is
given by the cohomology of $(D^*, \delta)$, after rationalization.

The augmentation $\epsilon \: BE \to \bZ$ induces a cochain contraction
$\epsilon \otimes id_{BE}^{\otimes q}$ of $\eta \: \bZ \to C^*$, mapping
$x_0 \otimes x_1 \otimes \dots \otimes x_q \in C^q$ to $\epsilon(x_0) x_1
\otimes \dots \otimes x_q \in C^{q-1}$.  It restricts to a contraction
of $0 \to D^*$, since $\epsilon \otimes id_{BE}^{\otimes q}$ commutes
with $\sigma$.  Hence the cohomology of $(D^*, \delta)$ is zero, as claimed.
\end{proof}

\begin{bibdiv}
\begin{biblist}

\bib{Ada74}{article}{
   author={Adams, J. F.},
   title={Operations of the $n$th kind in $K$-theory, and what we don't know
   about $RP^{\infty }$},
   conference={
      title={New developments in topology (Proc. Sympos. Algebraic Topology,
      Oxford, 1972)},
   },
   book={
      publisher={Cambridge Univ. Press, London},
   },
   date={1974},
   pages={1--9. London Math. Soc. Lecture Note Ser., No. 11},
   % review={\MR{0339178}},
}

\bib{Ami59}{article}{
   author={Amitsur, S. A.},
   title={Simple algebras and cohomology groups of arbitrary fields},
   journal={Trans. Amer. Math. Soc.},
   volume={90},
   date={1959},
   pages={73--112},
   % issn={0002-9947},
   % review={\MR{0101265}},
}

\bib{AR02}{article}{
   author={Ausoni, Christian},
   author={Rognes, John},
   title={Algebraic $K$-theory of topological $K$-theory},
   journal={Acta Math.},
   volume={188},
   date={2002},
   number={1},
   pages={1--39},
   % issn={0001-5962},
   % review={\MR{1947457}},
   % doi={10.1007/BF02392794},
}

\bib{AR12}{article}{
   author={Ausoni, Christian},
   author={Rognes, John},
   title={Rational algebraic $K$-theory of topological $K$-theory},
   journal={Geom. Topol.},
   volume={16},
   date={2012},
   number={4},
   pages={2037--2065},
   % issn={1465-3060},
   % review={\MR{2975299}},
   % doi={10.2140/gt.2012.16.2037},
}
\bib{BM05}{article}{
   author={Basterra, Maria},
   author={Mandell, Michael A.},
   title={Homology and cohomology of $E_\infty$ ring spectra},
   journal={Math. Z.},
   volume={249},
   date={2005},
   number={4},
   pages={903--944},
   % issn={0025-5874},
   % review={\MR{2126222}},
   % doi={10.1007/s00209-004-0744-y},
}
\bib{BCS10}{article}{
   author={Blumberg, Andrew J.},
   author={Cohen, Ralph L.},
   author={Schlichtkrull, Christian},
   title={Topological Hochschild homology of Thom spectra and the free loop
   space},
   journal={Geom. Topol.},
   volume={14},
   date={2010},
   number={2},
   pages={1165--1242},
   % issn={1465-3060},
   % review={\MR{2651551}},
   % doi={10.2140/gt.2010.14.1165},
}

\bib{BM}{article}{
   author={Blumberg, Andrew J.},
   author={Mandell, Michael A.},
   title={Tate--Poitou duality and the fiber of the cyclotomic trace for the sphere spectrum},
   date={2016},
   eprint={arXiv:1508.00014v2},
}

\bib{Boa99}{article}{
   author={Boardman, J. Michael},
   title={Conditionally convergent spectral sequences},
   conference={
      title={Homotopy invariant algebraic structures},
      address={Baltimore, MD},
      date={1998},
   },
   book={
      series={Contemp. Math.},
      volume={239},
      publisher={Amer. Math. Soc., Providence, RI},
   },
   date={1999},
   pages={49--84},
   % review={\MR{1718076}},
   % doi={10.1090/conm/239/03597},
}

\bib{BBLNR14}{article}{
   author={B{\"o}kstedt, Marcel},
   author={Bruner, Robert R.},
   author={Lun{\o}e-Nielsen, Sverre},
   author={Rognes, John},
   title={On cyclic fixed points of spectra},
   journal={Math. Z.},
   volume={276},
   date={2014},
   number={1-2},
   pages={81--91},
   % issn={0025-5874},
   % review={\MR{3150193}},
   % doi={10.1007/s00209-013-1187-0},
}

\bib{BHM93}{article}{
   author={B{\"o}kstedt, M.},
   author={Hsiang, W. C.},
   author={Madsen, I.},
   title={The cyclotomic trace and algebraic $K$-theory of spaces},
   journal={Invent. Math.},
   volume={111},
   date={1993},
   number={3},
   pages={465--539},
   % issn={0020-9910},
   % review={\MR{1202133}},
   % doi={10.1007/BF01231296},
}

\bib{BM94}{article}{
   author={B{\"o}kstedt, M.},
   author={Madsen, I.},
   title={Topological cyclic homology of the integers},
   note={$K$-theory (Strasbourg, 1992)},
   journal={Ast\'erisque},
   number={226},
   date={1994},
   pages={7--8, 57--143},
   % issn={0303-1179},
   % review={\MR{1317117}},
}

\bib{BM95}{article}{
   author={B{\"o}kstedt, M.},
   author={Madsen, I.},
   title={Algebraic $K$-theory of local number fields: the unramified case},
   conference={
      title={Prospects in topology},
      address={Princeton, NJ},
      date={1994},
   },
   book={
      series={Ann. of Math. Stud.},
      volume={138},
      publisher={Princeton Univ. Press, Princeton, NJ},
   },
   date={1995},
   pages={28--57},
   % review={\MR{1368652}},
}

\bib{Bor74}{article}{
   author={Borel, Armand},
   title={Stable real cohomology of arithmetic groups},
   journal={Ann. Sci. \'Ecole Norm. Sup. (4)},
   volume={7},
   date={1974},
   pages={235--272 (1975)},
   % issn={0012-9593},
   % review={\MR{0387496}},
}

\bib{BF78}{article}{
   author={Bousfield, A. K.},
   author={Friedlander, E. M.},
   title={Homotopy theory of $\Gamma $-spaces, spectra, and bisimplicial
   sets},
   conference={
      title={Geometric applications of homotopy theory (Proc. Conf.,
      Evanston, Ill., 1977), II},
   },
   book={
      series={Lecture Notes in Math.},
      volume={658},
      publisher={Springer, Berlin},
   },
   date={1978},
   pages={80--130},
   % review={\MR{513569}},
}

\bib{BK72}{book}{
   author={Bousfield, A. K.},
   author={Kan, D. M.},
   title={Homotopy limits, completions and localizations},
   series={Lecture Notes in Mathematics, Vol. 304},
   publisher={Springer--Verlag, Berlin--New York},
   date={1972},
   pages={v+348},
   % review={\MR{0365573}},
}

\bib{BFV07}{article}{
   author={Brun, Morten},
   author={Fiedorowicz, Zbigniew},
   author={Vogt, Rainer M.},
   title={On the multiplicative structure of topological Hochschild
   homology},
   journal={Algebr. Geom. Topol.},
   volume={7},
   date={2007},
   pages={1633--1650},
   % issn={1472-2747},
   % review={\MR{2366174}},
   % doi={10.2140/agt.2007.7.1633},
}
\bib{BCD10}{article}{
   author={Brun, Morten},
   author={Carlsson, Gunnar},
   author={Dundas, Bj{\o}rn Ian},
   title={Covering homology},
   journal={Adv. Math.},
   volume={225},
   date={2010},
   number={6},
   pages={3166--3213},
   % issn={0001-8708},
   % review={\MR{2729005}},
   % doi={10.1016/j.aim.2010.05.018},
}
\bib{BDS}{article}{
   author={Brun, Morten},
   author={Dundas, Bj{\o}rn Ian},
   author={Stolz, Martin},
   title={Equivariant Structure on Smash Powers},
   date={2016},
   eprint={arXiv:1604.05939},
}
\bib{Car84}{article}{
   author={Carlsson, Gunnar},
   title={Equivariant stable homotopy and Segal's Burnside ring conjecture},
   journal={Ann. of Math. (2)},
   volume={120},
   date={1984},
   number={2},
   pages={189--224},
   % issn={0003-486X},
   % review={\MR{763905}},
   % doi={10.2307/2006940},
}

\bib{Car08}{article}{
   author={Carlsson, Gunnar},
   title={Derived completions in stable homotopy theory},
   journal={J. Pure Appl. Algebra},
   volume={212},
   date={2008},
   number={3},
   pages={550--577},
   % issn={0022-4049},
   % review={\MR{2365333}},
   % doi={10.1016/j.jpaa.2007.06.015},
}
\bib{CDD11}{article}{
   author={Carlsson, Gunnar},
   author={Douglas, Christopher L.},
   author={Dundas, Bj{\o}rn Ian},
   title={Higher topological cyclic homology and the Segal conjecture for
   tori},
   journal={Adv. Math.},
   volume={226},
   date={2011},
   number={2},
   pages={1823--1874},
   % issn={0001-8708},
   % review={\MR{2737802}},
   % doi={10.1016/j.aim.2010.08.016},
}

% \bib{CE56}{book}{
%    author={Cartan, Henri},
%    author={Eilenberg, Samuel},
%    title={Homological algebra},
%    publisher={Princeton University Press, Princeton, N. J.},
%    date={1956},
%    pages={xv+390},
%    review={\MR{0077480}},
% }

% \bib{CH15}{article}{
%    author={Ching, Michael},
%    author={Harper, John E. },
%    title={Derived Koszul duality and TQ-homology completion of structured
%    ring spectra},
%    date={2015},
%    eprint={arXiv:1502.06944},
% }

\bib{CMNN}{article}{
   author={Clausen, D.},
   author={Mathew, A.},
   author={Naumann, N.},
   author={Noel, J.},
   title={Descent in algebraic $K$-theory and a conjecture of Ausoni--Rognes},
   date={2016},
   eprint={arXiv:1606.03328},
}

\bib{DHS88}{article}{
   author={Devinatz, Ethan S.},
   author={Hopkins, Michael J.},
   author={Smith, Jeffrey H.},
   title={Nilpotence and stable homotopy theory. I},
   journal={Ann. of Math. (2)},
   volume={128},
   date={1988},
   number={2},
   pages={207--241},
   % issn={0003-486X},
   % review={\MR{960945}},
   % doi={10.2307/1971440},
}

\bib{DD77}{article}{
   author={Dror, E.},
   author={Dwyer, W. G.},
   title={A long homology localization tower},
   journal={Comment. Math. Helv.},
   volume={52},
   date={1977},
   number={2},
   pages={185--210},
   % issn={0010-2571},
   % review={\MR{0458417}},
}

\bib{Dun97}{article}{
   author={Dundas, Bj{\o}rn Ian},
   title={Relative $K$-theory and topological cyclic homology},
   journal={Acta Math.},
   volume={179},
   date={1997},
   number={2},
   pages={223--242},
   % issn={0001-5962},
   % review={\MR{1607556}},
   % doi={10.1007/BF02392744},
}

\bib{DGM13}{book}{
   author={Dundas, Bj{\o}rn Ian},
   author={Goodwillie, Thomas G.},
   author={McCarthy, Randy},
   title={The local structure of algebraic K-theory},
   series={Algebra and Applications},
   volume={18},
   publisher={Springer--Verlag London, Ltd., London},
   date={2013},
   pages={xvi+435},
   % isbn={978-1-4471-4392-5},
   % isbn={978-1-4471-4393-2},
   % review={\MR{3013261}},
}

\bib{Dwy80}{article}{
   author={Dwyer, W. G.},
   title={Twisted homological stability for general linear groups},
   journal={Ann. of Math. (2)},
   volume={111},
   date={1980},
   number={2},
   pages={239--251},
   % issn={0003-486X},
   % review={\MR{569072}},
   % doi={10.2307/1971200},
}

\bib{EKMM97}{book}{
   author={Elmendorf, A. D.},
   author={Kriz, I.},
   author={Mandell, M. A.},
   author={May, J. P.},
   title={Rings, modules, and algebras in stable homotopy theory},
   series={Mathematical Surveys and Monographs},
   volume={47},
   note={With an appendix by M. Cole},
   publisher={American Mathematical Society, Providence, RI},
   date={1997},
   pages={xii+249},
   % isbn={0-8218-0638-6},
   % review={\MR{1417719}},
}

\bib{Goo86}{article}{
   author={Goodwillie, Thomas G.},
   title={Relative algebraic $K$-theory and cyclic homology},
   journal={Ann. of Math. (2)},
   volume={124},
   date={1986},
   number={2},
   pages={347--402},
   % issn={0003-486X},
   % review={\MR{855300}},
   % doi={10.2307/1971283},
}

\bib{Goo91}{article}{
   author={Goodwillie, Thomas G.},
   title={Calculus. II. Analytic functors},
   journal={$K$-Theory},
   volume={5},
   date={1991/92},
   number={4},
   pages={295--332},
   % issn={0920-3036},
   % review={\MR{1162445}},
   % doi={10.1007/BF00535644},
}

\bib{GM95}{article}{
   author={Greenlees, J. P. C.},
   author={May, J. P.},
   title={Generalized Tate cohomology},
   journal={Mem. Amer. Math. Soc.},
   volume={113},
   date={1995},
   number={543},
   pages={viii+178},
   % issn={0065-9266},
   % review={\MR{1230773}},
   % doi={10.1090/memo/0543},
}

% \bib{HH13}{article}{
%    author={Harper, John E.},
%    author={Hess, Kathryn},
%    title={Homotopy completion and topological Quillen homology of structured
%    ring spectra},
%    journal={Geom. Topol.},
%    volume={17},
%    date={2013},
%    number={3},
%    pages={1325--1416},
%    % issn={1465-3060},
%    review={\MR{3073927}},
%    % doi={10.2140/gt.2013.17.1325},
% }

\bib{Hes}{article}{
   author={Hesselholt, Lars},
   title={Periodic topological cyclic homology and the Hasse--Weil zeta function},
   date={2016},
   eprint={arXiv:1602.01980},
}

\bib{HM97}{article}{
   author={Hesselholt, Lars},
   author={Madsen, Ib},
   title={On the $K$-theory of finite algebras over Witt vectors of perfect
   fields},
   journal={Topology},
   volume={36},
   date={1997},
   number={1},
   pages={29--101},
   % issn={0040-9383},
   % review={\MR{1410465}},
   % doi={10.1016/0040-9383(96)00003-1},
}

\bib{Hir03}{book}{
   author={Hirschhorn, Philip S.},
   title={Model categories and their localizations},
   series={Mathematical Surveys and Monographs},
   volume={99},
   publisher={American Mathematical Society, Providence, RI},
   date={2003},
   pages={xvi+457},
   % isbn={0-8218-3279-4},
   % review={\MR{1944041}},
}

\bib{Igu88}{article}{
   author={Igusa, Kiyoshi},
   title={The stability theorem for smooth pseudoisotopies},
   journal={$K$-Theory},
   volume={2},
   date={1988},
   number={no.~1-2},
   pages={vi+355},
   % issn={0920-3036},
   % review={\MR{972368}},
}

\bib{LNR11}{article}{
   author={Lun{\o}e-Nielsen, Sverre},
   author={Rognes, John},
   title={The Segal conjecture for topological Hochschild homology of
   complex cobordism},
   journal={J. Topol.},
   volume={4},
   date={2011},
   number={3},
   pages={591--622},
   % issn={1753-8416},
   % review={\MR{2832570}},
   % doi={10.1112/jtopol/jtr015},
}

\bib{MMSS01}{article}{
   author={Mandell, M. A.},
   author={May, J. P.},
   author={Schwede, S.},
   author={Shipley, B.},
   title={Model categories of diagram spectra},
   journal={Proc. London Math. Soc. (3)},
   volume={82},
   date={2001},
   number={2},
   pages={441--512},
   % issn={0024-6115},
   % review={\MR{1806878}},
   % doi={10.1112/S0024611501012692},
}

\bib{MNN17}{article}{
   author={Mathew, Akhil},
   author={Naumann, Niko},
   author={Noel, Justin},
   title={Nilpotence and descent in equivariant stable homotopy theory},
   journal={Adv. Math.},
   volume={305},
   date={2017},
   pages={994--1084},
   % issn={0001-8708},
   % review={\MR{3570153}},
   % doi={10.1016/j.aim.2016.09.027},
}

\bib{May72}{book}{
   author={May, J. P.},
   title={The geometry of iterated loop spaces},
   note={Lectures Notes in Mathematics, Vol. 271},
   publisher={Springer--Verlag, Berlin--New York},
   date={1972},
   pages={viii+175},
   % review={\MR{0420610}},
}

% \bib{McC97}{article}{
%    author={McCarthy, Randy},
%    title={Relative algebraic $K$-theory and topological cyclic homology},
%    journal={Acta Math.},
%    volume={179},
%    date={1997},
%    number={2},
%    pages={197--222},
%    % issn={0001-5962},
%    review={\MR{1607555}},
%    % doi={10.1007/BF02392743},
% }

\bib{MS93}{article}{
   author={McClure, J. E.},
   author={Staffeldt, R. E.},
   title={On the topological Hochschild homology of $b{\rm u}$. I},
   journal={Amer. J. Math.},
   volume={115},
   date={1993},
   number={1},
   pages={1--45},
   % issn={0002-9327},
   % review={\MR{1209233}},
   % doi={10.2307/2374721},
}

\bib{MRW77}{article}{
   author={Miller, Haynes R.},
   author={Ravenel, Douglas C.},
   author={Wilson, W. Stephen},
   title={Periodic phenomena in the Adams-Novikov spectral sequence},
   journal={Ann. of Math. (2)},
   volume={106},
   date={1977},
   number={3},
   pages={469--516},
   % review={\MR{0458423}},
   % doi={10.2307/1971064},
}

\bib{Mil60}{article}{
   author={Milnor, J.},
   title={On the cobordism ring $\Omega^{\ast}$ and a complex analogue.
   I},
   journal={Amer. J. Math.},
   volume={82},
   date={1960},
   pages={505--521},
   % issn={0002-9327},
   % review={\MR{0119209}},
}

\bib{Nov62}{article}{
   author={Novikov, S. P.},
   title={Homotopy properties of Thom complexes},
   language={Russian},
   journal={Mat. Sb. (N.S.)},
   volume={57 (99)},
   date={1962},
   pages={407--442},
   % review={\MR{0157381}},
}

\bib{Nov67}{article}{
   author={Novikov, S. P.},
   title={Methods of algebraic topology from the point of view of cobordism
   theory},
   language={Russian},
   journal={Izv. Akad. Nauk SSSR Ser. Mat.},
   volume={31},
   date={1967},
   pages={855--951},
   % issn={0373-2436},
   % review={\MR{0221509}},
}

% \bib{Qui69}{article}{
%    author={Quillen, Daniel},
%    title={On the formal group laws of unoriented and complex cobordism
%    theory},
%    journal={Bull. Amer. Math. Soc.},
%    volume={75},
%    date={1969},
%    pages={1293--1298},
%    % issn={0002-9904},
%    review={\MR{0253350}},
% }

\bib{Rav84}{article}{
   author={Ravenel, Douglas C.},
   title={Localization with respect to certain periodic homology theories},
   journal={Amer. J. Math.},
   volume={106},
   date={1984},
   number={2},
   pages={351--414},
   % issn={0002-9327},
   % review={\MR{737778}},
   % doi={10.2307/2374308},
}

\bib{Rav86}{book}{
   author={Ravenel, Douglas C.},
   title={Complex cobordism and stable homotopy groups of spheres},
   series={Pure and Applied Mathematics},
   volume={121},
   publisher={Academic Press, Inc., Orlando, FL},
   date={1986},
   pages={xx+413},
   % isbn={0-12-583430-6},
   % isbn={0-12-583431-4},
   % review={\MR{860042}},
}

\bib{Rog99}{article}{
   author={Rognes, John},
   title={Topological cyclic homology of the integers at two},
   journal={J. Pure Appl. Algebra},
   volume={134},
   date={1999},
   number={3},
   pages={219--286},
   % issn={0022-4049},
   % review={\MR{1663390}},
   % doi={10.1016/S0022-4049(97)00155-2},
}

\bib{Rog02}{article}{
   author={Rognes, John},
   title={Two-primary algebraic $K$-theory of pointed spaces},
   journal={Topology},
   volume={41},
   date={2002},
   number={5},
   pages={873--926},
   % issn={0040-9383},
   % review={\MR{1923990}},
   % doi={10.1016/S0040-9383(01)00005-2},
}

\bib{Rog03}{article}{
   author={Rognes, John},
   title={The smooth Whitehead spectrum of a point at odd regular primes},
   journal={Geom. Topol.},
   volume={7},
   date={2003},
   pages={155--184 (electronic)},
   % issn={1465-3060},
   % review={\MR{1988283}},
   % doi={10.2140/gt.2003.7.155},
}

\bib{Rog08}{article}{
   author={Rognes, John},
   title={Galois extensions of structured ring spectra. Stably dualizable
   groups},
   journal={Mem. Amer. Math. Soc.},
   volume={192},
   date={2008},
   number={898},
   pages={viii+137},
   % issn={0065-9266},
   % review={\MR{2387923}},
   % doi={10.1090/memo/0898},
}

\bib{Shi04}{article}{
   author={Shipley, Brooke},
   title={A convenient model category for commutative ring spectra},
   conference={
      title={Homotopy theory: relations with algebraic geometry, group
      cohomology, and algebraic $K$-theory},
   },
   book={
      series={Contemp. Math.},
      volume={346},
      publisher={Amer. Math. Soc., Providence, RI},
   },
   date={2004},
   pages={473--483},
   % review={\MR{2066511}},
   % doi={10.1090/conm/346/06300},
}

\bib{Tho85}{article}{
   author={Thomason, R. W.},
   title={Algebraic $K$-theory and \'etale cohomology},
   journal={Ann. Sci. \'Ecole Norm. Sup. (4)},
   volume={18},
   date={1985},
   number={3},
   pages={437--552},
   % issn={0012-9593},
   % review={\MR{826102}},
}

\bib{Tsa98}{article}{
   author={Tsalidis, Stavros},
   title={Topological Hochschild homology and the homotopy descent problem},
   journal={Topology},
   volume={37},
   date={1998},
   number={4},
   pages={913--934},
   % issn={0040-9383},
   % review={\MR{1607764}},
   % doi={10.1016/S0040-9383(97)00045-1},
}

\bib{Tsa00}{article}{
   author={Tsalidis, Stavros},
   title={On the \'etale descent problem for topological cyclic homology and
   algebraic $K$-theory},
   journal={$K$-Theory},
   volume={21},
   date={2000},
   number={2},
   pages={151--199},
   % issn={0920-3036},
   % review={\MR{1804560}},
   % doi={10.1023/A:1007847318081},
}

\bib{Wal85}{article}{
   author={Waldhausen, Friedhelm},
   title={Algebraic $K$-theory of spaces},
   conference={
      title={Algebraic and geometric topology},
      address={New Brunswick, N.J.},
      date={1983},
   },
   book={
      series={Lecture Notes in Math.},
      volume={1126},
      publisher={Springer, Berlin},
   },
   date={1985},
   pages={318--419},
   % review={\MR{802796}},
   % doi={10.1007/BFb0074449},
}

\bib{WJR13}{book}{
   author={Waldhausen, Friedhelm},
   author={Jahren, Bj{\o}rn},
   author={Rognes, John},
   title={Spaces of PL manifolds and categories of simple maps},
   series={Annals of Mathematics Studies},
   volume={186},
   publisher={Princeton University Press, Princeton, NJ},
   date={2013},
   pages={vi+184},
   % isbn={978-0-691-15776-4},
   % review={\MR{3202834}},
   % doi={10.1515/9781400846528},
}

\end{biblist}
\end{bibdiv}

\end{document}